\newtheorem{lemma}{Lemma}[section]
\newtheorem{prop}[lemma]{Proposition}
\newtheorem{thm}[lemma]{Theorem}
\newtheorem{cor}[lemma]{Corollary}
\theoremstyle{definition}
\newtheorem{definition}{Definition}[section]
\newtheorem{example}{Example}[section]
\theoremstyle{remark}
\newtheorem{remark}{Remark}
\begin{document}

\title{Action of intertwining operators on pseudo-spherical $K$-types}

\author{Shiang Tang}
\address{Department of Mathematics, University of Utah, Salt Lake City, UT 84112}
\email{tang@math.utah.edu}

\begin{abstract}
In this paper, we give a concrete description of the two-fold cover of a simply connected, split real reductive group and its maximal compact subgroup as Chevalley groups. We define a representation of the maximal compact subgroup called pseudospherical representation, it appears with multiplicity one in the principal series representation. We introduce a family of canonically defined intertwining operators and compute the action of them on pseudospherical $K$-types, obtaining explicit formulas of the Harish-Chandra $c$-function.
\end{abstract}

\maketitle

\section{Introduction}

Assume that $\underline G$ is the split real form of a simply-connected complex algebraic group. It turns out that $\underline G$ admits a unique nontrivial two-fold cover(or double cover) $G$. It is the nonlinear group we wish to study. Such coverings have been studied by many people for a long time. There are several general results about coverings of algebraic groups in [5]. The \emph{pseudospherical principal series representation} is defined for $G$. It is related to a conjectural Shimura correspondence for split real groups, see [1]. Pseudospherical representation can be referred to three definitions: Let $G=PK$ be an Iwasawa decomposition with $P=MAN$ a minimal parabolic subgroup. We have pseudospherical representations of $M$, pseudospherical representations of $K$ and pseudospherical representations of $G$. See definition 3.1. In particular, we are interested in pseudospherical principal series, that is, a principal series representation that contains a pseudospherical $K$-type.

The intertwining operators between two principal series representations has been studied for a long time. An intertwining operator, as an integral operator, reveals many properties of the principal series representation such as reducibility points. The intertwining operator plays an important role in the general Plancherel formula for semisimple Lie groups developed by Harish-Chandra. It is also related to the theory of Eisenstein series. A nice paper on the formalism of this topic can be found in [4]. In this paper, we normalize the intertwining operators between two pseudospherical principal series in a way that it is independent of a choice of $w\in W=N_K(A)/Z_K(A)$ in $N_K(A)$ and obtain a canonical definition. We are interested in the action of intertwining operators on pseudospherical $K$-types. We compute explicitly the \emph{Harish-Chandra $c$-function} associated to this action, which is our main result (Theorem 6.5). There is an analogous result in the $p$-adic case obtained by H.Y.Loke and G.Savin, see [3].

The structure of this paper is arranged as follows: In section 2, we recall some basic facts on Chevalley groups and their covering groups. We define the maximal compact subgroup $K$ of the covering group $G$ using Steinberg symbols. We calculate the structure of $\widetilde{SL}(2,\mathbb R)$, the non-trivial two-fold cover of $SL(2,\mathbb R)$, making a comparison between Kubota cocycles and Steinberg symbols and writing down the exponential map from the Lie algebra to the cover. In section 3, we define the pseudospherical representation following [1] and list some properties regarding the action of $W$ on it. In section 4, we define a family of canonical intertwining operators among pseudospherical principal series. In section 5 we compute the intertwining operators of $\widetilde{SL}(2,\mathbb R)$ which is important for the general groups. Finally, we calculate the action of intertwining operators on pseudospherical $K$-types and obtain our main result in section 6.

ACKNOWLEDGEMENT: This research has been partially supported by an NSF grant DMS 0852429. I am also greatly indebted to my adviser Gordan Savin, without whom none of the works would be possible.

\section{Chevalley groups and their covering groups}

In section 2.1, we recall the well-known construction of the Chevalley groups. In section 2.2, we state a bunch of results we need for the covering group of a Chevalley group. In particular, the generators and relations of the double cover are given in terms of the Hilbert symbol. We define the minimal parabolic subgroup $P=NAM$ and the maximal compact subgroup $K$ in terms of the Steinberg symbol, see proposition 2.6. In section 2.3, we specialize our discussion in 2.2 to the case $\widetilde{SL}(2,\mathbb R)$, we make a comparison between the definition based on Kubota symbols and the definition based on Steinberg symbols, see proposition 2.8. We also compute explicitly an exponential map from the Lie algebra to the cover, see proposition 2.11.

\subsection{Construction of a Chevalley group}

In this section, we recall the construction of Chevalley groups following [5]. Let $\mathfrak g$ be a semisimple Lie algebra over $\mathbb C$, and $\mathfrak h$ a Cartan subalgebra of $\mathfrak g$, and $\Phi$ the corresponding root system. We use $\alpha,\beta,\gamma,...$ to denote the roots. Let $B$ be the Killing form on $\mathfrak g$. Since it is non degenerate, there exists $H_{\alpha}'\in \mathfrak h$ such that $B(H,H_{\alpha}')=\alpha(H)$ for all $H\in \mathfrak h$. Define $(\alpha,\beta)=B(H_{\alpha}',H_{\beta}')$ for all $\alpha,\beta\in\Phi$. The Cartan integer $<\alpha,\beta>$ is defined to be  $2(\alpha,\beta)/(\beta,\beta)$. $\Phi$ is invariant under all reflections $w_{\alpha}$ ($\alpha\in \Phi$) where $w_{\alpha}$ is the reflection across the hyperplane orthogonal to $\alpha$. These reflections generate the Weyl group $W$.

For each $\alpha$, define $H_{\alpha}=\frac{2}{(\alpha,\alpha)}H_{\alpha}'$ and $H_i=H_{\alpha_i}$ where $\Delta=\{\alpha_1,...,\alpha_l\}$ is a set of simple roots. By [5], one can choose $X_{\alpha}\in \mathfrak g_{\alpha}$ such that $[H_{\beta},X_{\alpha}]=<\alpha,\beta>X_{\alpha}$, $[X_{\alpha},X_{-\alpha}]=H_{\alpha}=$ a integer linear combination of the $H_i$'s, and $[X_{\alpha},X_{\beta}]=N_{\alpha\beta}X_{\alpha+\beta}$ where $N_{\alpha\beta}$ is an integer which is $0$ if $\alpha+\beta$ is not a root. The collection of $H_i$'s and $X_{\alpha}$'s is called a \textbf{Chevalley basis} of the complex semisimple Lie algebra $\mathfrak g$. It is important that the integer span $\mathfrak g_{\mathbb Z}$ of the basis elements is stable under the Lie bracket.

Let $L_0$ be the root lattice, i.e, the integer span of all roots in $\Phi$, and $L_1$ the weight lattice, which is the set of all $\mu\in\mathfrak h^*$ such that $\mu(H_{\alpha})\in\mathbb Z$ for all roots $\alpha$. Assume $(\mathfrak g,V)$ is a complex finite dimensional representation of $\mathfrak g$, one can show that its weight lattice $L_V$ is contained between $L_0$ and $L_1$. To construct the Chevalley group based on the representation $(\mathfrak g,V)$, choose a full-rank lattice $M$ in $V$ which is invariant under the set
$$\{X_{\alpha}^n/n!: n\in\mathbb Z_{\geq 0}, \alpha\in\Phi \}$$
where we are thinking of $X_{\alpha}^n/n!$ as a member of $End(V)$. One can show (see [5]) that such a lattice exists. For any field $k$, set $V^k$ to be the vector space $M\otimes_{\mathbb Z}k$ on which $X_{\alpha}^n/n!$ acts in a natural way. Since the representation $V$ has a finite number of weights, there is some $n$ for each $\alpha$ such that $X_{\alpha}^n\in End(V^k)$ is zero. Therefore for $t\in k$ and $\alpha\in \Phi$,
$$x_{\alpha}(t)=exp(tX_{\alpha})=1+tX_{\alpha}+\frac{(tX_{\alpha})^2}{2!}+\frac{(tX_{\alpha})^3}{3!}+...\in GL(V^k)$$ is a finite sum and hence is well-defined.

Define the \textbf{Chevalley group} to be the subgroup $G(k)$ of $GL(V^k)$ generated by $x_{\alpha}(t)$ with $t\in k$, $\alpha\in\Phi$. We say $G$ is \textbf{simply connected} if $L_V=L_1$. Note that this definition is different from simply-connectedness in the topological sense. We assume all Chevalley groups are simply connected for the rest of this paper.

Define $$w_{\alpha}(t)=x_{\alpha}(t)x_{-\alpha}(-t^{-1})x_{\alpha}(t)$$
and $$h_{\alpha}(t)=w_{\alpha}(t)w_{\alpha}(-1)$$ for $t\in k^{\times}$. Define $T$ (the Cartan subgroup, or maximal torus) be the subgroup of $G$ generated by $h_{\alpha}(t)$ with $t\in k^{\times}$, $\alpha\in \Phi$. By [5,lemma 28], $h_{\alpha}(t)$ is multiplicative as a function of $t$, and simply-connectedness implies that any element of $T$ can be written uniquely as $h_1(t_1)h_2(t_2)\cdots h_l(t_l)$ for some $t_1,...,t_l \in k^{\times}$ where $h_i(t_i)=h_{\alpha_i}(t_i)$.

Now let us describe the generators and relations of a simply connected Chevalley group $G$ over $k$:

\begin{align*}
&(A)\:\:\: x_{\alpha}(t)x_{\alpha}(u)=x_{\alpha}(t+u)\\
&(B)\:\:\: (x_{\alpha}(t),x_{\beta}(u))=\prod_{i,j>0,i\alpha+j\beta\in\Phi} x_{i\alpha+j\beta}(c_{ij}t^iu^j)\\
&(B')\:\:\: w_{\alpha}(t)x_{\alpha}(u)w_{\alpha}(-t)=x_{-\alpha}(-t^{-2}u)\\
&(C)\:\:\: h_{\alpha}(t)h_{\alpha}(u)=h_{\alpha}(tu)
\end{align*}
where $c_{ij}$'s are integers depending on $\alpha,\beta$ and the chosen ordering, but not on $t$ or $u$.
By [5,theorem 8], if $\Phi$ is not of type $A_1$, then $A,B,C$ form a complete set of relations for $G$ constructed from $\Phi$ and $k$; If if $\Phi$ is of type $A_1$, then $A,B',C$ form a complete set of relations. By [5,lemma 37], $B'$ is also true when $\Phi$ is not of type $A_1$, and it implies that
\begin{align*}
&w_{\alpha}(t)=w_{-\alpha}(-t^{-1})\\
&w_{\alpha}(1)h_{\alpha}(t)w_{\alpha}(-1)=h_{\alpha}(t^{-1})\\
\end{align*}
which we may use later.

\subsection{Covering groups}

To study the covering group of a simply connected Chevalley group, we need some preparations. First, a central extension of a group $G$ is a couple $(\pi,G')$ where $G'$ is a group, $\pi$ is a homomorphism of $G'$ onto $G$ such that $Ker\: \pi \subset$ center of $G'$. A central extension $(\pi,E)$ of a group $G$ is \emph{universal} if for any central extension $(\pi',E')$ of $G$ there exists a unique homomorphism $\phi: E\to E'$ such that $\pi'\circ \phi=\pi$. It is easy to see that if a universal central extension exists, it is unique up to isomorphism.

\begin{thm}([5,theorem 10])
Let $\Phi$ be an irreducible root system and $k$ a field such that $|k|>4$ and if $rank\:\Phi=1$, then $|k|>9$. Let $G$ be the corresponding simply connected Chevalley group abstractly defined by the relations $A,B,B',C$, and $E$ be the group defined by the relations $A,B,B'$ (we use $B'$ only if $rank\:\Phi=1$), and let $\pi$ be the natural homomorphism from $E$ to $G$, then $(\pi,E)$ is a universal central extension of $G$.
\end{thm}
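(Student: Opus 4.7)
The plan is to verify the two requirements for $(\pi,E)$ to be the universal central extension: first that $\ker\pi$ lies in the center of $E$, and second that every central extension $\pi':E'\to G$ admits a unique lift $\phi:E\to E'$ with $\pi'\circ\phi=\pi$. Both the relations used to present $E$ and the relation $C$ that distinguishes $E$ from $G$ are formal in the generators $x_\alpha(t)$, so the entire argument proceeds by manipulating explicit words in these generators.

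For centrality, I would note that $\ker\pi$ is normally generated in $E$ by the ``discrepancies'' of relation $C$, namely the elements $h_\alpha(tu)h_\alpha(u)^{-1}h_\alpha(t)^{-1}$, which vanish in $G$ but not a priori in $E$. Using $A$, $B$ and (when needed) $B'$, one computes how $x_\beta(s)$ conjugates $w_\alpha(t)$ and $h_\alpha(t)$; the outcome is that each such discrepancy commutes with every $x_\beta(s)$, hence with all of $E$. This is essentially a collection of word identities in the Chevalley relations, and it is here that the irreducibility of $\Phi$ enters, since one must move between root subgroups.

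For universality, fix a central extension $\pi':E'\to G$ with central kernel $Z'$ and choose arbitrary set-theoretic lifts $\tilde x_\alpha(t)\in E'$ of $x_\alpha(t)$. Because $Z'$ is central, the expressions
$$a_\alpha(t,u)=\tilde x_\alpha(t)\tilde x_\alpha(u)\tilde x_\alpha(t+u)^{-1},\qquad b_{\alpha\beta}(t,u)=\bigl(\tilde x_\alpha(t),\tilde x_\beta(u)\bigr)\prod_{i,j>0}\tilde x_{i\alpha+j\beta}(c_{ij}t^iu^j)^{-1}$$
land in $Z'$ and satisfy cocycle identities in $t$ and $u$. The standard cocycle calculus in the abelian group $Z'$ forces them to be biadditive/bimultiplicative, and a normalization argument — replacing each $\tilde x_\alpha(t)$ by $\tilde x_\alpha(t)z_\alpha(t)$ for suitable $z_\alpha(t)\in Z'$ — kills them, provided enough scalars are available in $k$. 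Once $A$ and $B$ (or $B'$ in rank one) hold on the nose, the defining presentation of $E$ delivers a homomorphism $\phi:E\to E'$ with $\pi'\circ\phi=\pi$; uniqueness of $\phi$ is immediate, since $E$ is generated by the $x_\alpha(t)$ and the lift of each generator is determined mod $Z'$, hence uniquely determined by the requirement $\pi'\circ\phi=\pi$ together with the multiplicativity already imposed.

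The main obstacle is the normalization step that trivializes the cocycles $a_\alpha$ and $b_{\alpha\beta}$. Centrality of $Z'$ reduces the problem to solving a system of equations in an abelian group, but one still needs ``enough room'' in $k$: the key manoeuvre is to produce an element $s\in k^{\times}$ with $s\neq 1$ (and in rank one with $s^2\neq 1$ and $s^2\pm s\pm 1\neq 0$) that allows one to express the cocycle value as a coboundary. This is exactly where the hypotheses $|k|>4$, respectively $|k|>9$ in the rank one case (where $B$ is vacuous and one must work with the more delicate $B'$, coupling two opposite roots), are used; without them the normalization can fail and $E$ need not be universal.
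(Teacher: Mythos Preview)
The paper does not give its own proof of this statement: Theorem~2.1 is quoted verbatim from Steinberg's lecture notes as ``[5, theorem 10]'' and is used as a black box, so there is nothing in the paper to compare your argument against.

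That said, your sketch follows the same strategy as Steinberg's original proof and is largely sound, but the uniqueness clause is not ``immediate'' for the reason you give. Knowing that a lift of each $x_\alpha(t)$ is determined only modulo $Z'$ does \emph{not} pin down $\phi$; two homomorphisms $\phi,\psi:E\to E'$ with $\pi'\circ\phi=\pi'\circ\psi=\pi$ could a priori differ by a homomorphism $E\to Z'$, namely $g\mapsto\phi(g)\psi(g)^{-1}$. What kills this is that $E$ is \emph{perfect}: every $x_\alpha(t)$ is a commutator in $E$ (this is where the hypotheses $|k|>4$, resp.\ $|k|>9$, are already needed, not only in the cocycle normalization), so any homomorphism from $E$ to an abelian group is trivial. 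Your phrase ``the multiplicativity already imposed'' does not supply this; you must establish $E=[E,E]$ explicitly, and then uniqueness genuinely becomes a one-line consequence. Steinberg organizes the whole proof around this: he shows $E$ is perfect and that every central extension of $E$ itself splits (your normalization argument, but applied one level up), which together characterize $E$ as its own universal central extension and hence as the universal central extension of $G$.
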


From now on, we use $x_{\alpha}(t),w_{\alpha}(t),h_{\alpha}(t)$ to denote the elements in the central extension of $G$, and $\underline x_{\alpha}(t),\underline w_{\alpha}(t),\underline h_{\alpha}(t)$ to denote the elements in $G$. They are called \emph{Steinberg symbols} of the Chevalley group.

The next theorem gives a complete description of $C=Ker\:\pi$:

\begin{thm}([5,theorem 12])

Keep the assumptions in the previous theorem. $C=Ker\:\pi$ is isomorphic to the abstract group $A$ generated by the symbols $f(t,u)$ ($t,u\in k^*$) subject to the relations:
\begin{align*}
&(a)\:\:\: f(t,u)f(tu,v)=f(t,uv)f(u,v), f(1,u)=f(u,1)=1\\
&(b)\:\:\: f(t,u)f(t,-u^{-1})=f(t,-1)\\
&(c)\:\:\: f(t,u)=f(u^{-1},t)\\
&(d)\:\:\: f(t,u)=f(t,-tu)\\
&(e)\:\:\: f(t,u)=f(t,(1-t)u)
\end{align*}
and in the case $\Phi$ is not of type $C_n$ ($n\geq 1$) the relations above may be replaced by

\begin{align*}
&(ab')\:\:\: f(t,u)f(t',u)=f(tt',u),f(t,u)f(t,u')=f(t,uu')\\
&(c')\:\:\: f(t,u)=f(u,t)^{-1}\\
&(d')\:\:\: f(t,-t)=1\\
&(e')\:\:\: f(t,1-t)=1.
\end{align*}
The isomorphism is given by $$\phi: f(t,u)\mapsto h_{\alpha}(t)h_{\alpha}(u)h_{\alpha}(tu)^{-1}$$
where $\alpha$ is a fixed long root.
One can write
$$h_{\alpha}(t)h_{\alpha}(u)=f(t,u)h_{\alpha}(tu)$$ if we identify $C=Ker\:\pi$ with $A$ via $\phi$.
\end{thm}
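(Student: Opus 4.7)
The plan is to construct mutually inverse homomorphisms between the abstract group $A$ and $C=\ker\pi$. The forward map $\phi: A \to C$ sending $f(t,u)\mapsto h_\alpha(t)h_\alpha(u)h_\alpha(tu)^{-1}$ is built by hand; the inverse is obtained by realizing $A$ itself as the kernel of a central extension of $G$ built from an explicit Steinberg cocycle on Bruhat cells, then invoking the universal property of $E$.

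First I would set $c(t,u) := h_\alpha(t)h_\alpha(u)h_\alpha(tu)^{-1} \in E$ and verify two things: (i) $c(t,u)\in\ker\pi$, which is immediate from relation $(C)$ in $G$; (ii) $c(t,u)$ is central in $E$. Centrality reduces to showing that conjugation by $h_\alpha(t)$ sends $x_\beta(s)\mapsto x_\beta(t^{\langle\beta,\alpha\rangle}s)$, a rank-one computation combining $(A),(B'),$ and $(B)$ inside the subgroup generated by $\{x_{\pm\alpha}(\cdot)\}$. Since $t^a u^a(tu)^{-a}=1$ in $k^\times$, the element $c(t,u)$ acts trivially on every generator of $E$. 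Next I would verify relations (a)--(e) hold for $c(t,u)$ in $E$: (a) is a direct cocycle identity from the definition, while (b)--(e) all reduce to identities inside the rank-one subgroup, using $(A),(B')$ and the derived identity $w_\alpha(1)h_\alpha(t)w_\alpha(-1)=h_\alpha(t^{-1})$. The Steinberg relation (e) in particular comes from the decomposition $x_\alpha(1)=x_\alpha(t)x_\alpha(1-t)$, which after conjugation by $w_\alpha$ produces the tame-symbol identity. These computations make $\phi$ a well-defined homomorphism.

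For surjectivity, I would establish a Bruhat-like normal form for $E$: every element lies in some $U_E\cdot n_w\cdot H_E\cdot U_E$, where $H_E$ is the subgroup of $E$ generated by all $h_\beta(s)$. Uniqueness of Bruhat decomposition in $G$ then forces $\ker\pi\subseteq H_E$, and $W$-conjugation -- using that all long roots are $W$-conjugate to $\alpha$, and that short-root defects can be rewritten via long roots inside appropriate sub-root-systems -- collapses every defect $h_\beta(t)h_\beta(u)h_\beta(tu)^{-1}$ into a product of $c(t,u)$'s. This yields a surjection $\phi\colon A\twoheadrightarrow \ker\pi$.

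The main obstacle is injectivity of $\phi$: proving that $(a)$--$(e)$ is a complete set of relations. My plan is to construct, purely from the abstract group $A$, a central extension $\widetilde{E}$ of $G$ by $A$ via an explicit Steinberg 2-cocycle $\sigma\colon G\times G\to A$ defined through the Bruhat decomposition. The cocycle condition $\sigma(g,g')\sigma(gg',g'')=\sigma(g,g'g'')\sigma(g',g'')$ reduces to identities that require precisely relations (a)--(e); once verified, the universal property of $E$ produces a homomorphism $E\to\widetilde E$ whose restriction to $\ker\pi$ inverts $\phi$. Finally, the equivalence of (a)--(e) with (ab'), (c'), (d'), (e') outside type $C_n$ follows by running the same computation inside an $A_2$-subsystem of long roots (absent precisely in type $C_n$): the Weyl element of this $A_2$ exchanges the two simple roots and supplies the extra symmetry that upgrades (a)--(c) to bilinearity (ab') and antisymmetry (c'); then (d') and (e') follow at once from (d) and (e) combined with (ab').
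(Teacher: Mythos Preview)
The paper does not prove this theorem; it is stated as a citation of Steinberg's result [5, theorem 12] and is followed only by two remarks, with no argument given. So there is nothing in the paper to compare your proposal against.

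That said, your outline is essentially the standard Steinberg--Matsumoto approach and is structurally sound: the forward map and relations (a)--(e) for $c(t,u)$ are rank-one computations; surjectivity via a Bruhat normal form in $E$ reducing $\ker\pi$ into the subgroup generated by the $h_\beta(\cdot)$ is correct; and the passage from (a)--(e) to the bilinear relations (ab$'$)--(e$'$) via an $A_2$ subsystem of long roots (which exists precisely outside type $C_n$, $n\ge 1$) is the right mechanism. The one place where your sketch understates the difficulty is injectivity: constructing the Steinberg cocycle $\sigma\colon G\times G\to A$ and verifying the cocycle identity across all Bruhat-cell products is a substantial case analysis (this is the content of Matsumoto's theorem), not a routine check, and ``reduces to identities that require precisely (a)--(e)'' hides real work. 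If you intend this as a self-contained proof rather than a pointer to Steinberg and Matsumoto, that step needs to be fleshed out considerably.
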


\begin{remark}
Because all long roots are conjugate by $W$, $\phi$ does not depend on the choice of a long root $\alpha$.
\end{remark}

\begin{remark}
These relations are satisfied by the norm residue symbol in class field theory.
\end{remark}

For the application to real groups, we specialize our result to the case when $k=\mathbb R$, and consider the double cover. First, recall the real Hilbert quadratic symbol $(,)_{\mathbb R}$. It is a map from $\mathbb R^*\times \mathbb R^*$ to $\mu_2=\{\pm 1\}$, for $t,u\in \mathbb R^*$, $(t,u)=1$ if and only if $x^2-ty^2-uz^2$ has a nontrivial solution $(x,y,z)\in \mathbb R^3$. It is easy to see that $(t,u)=1$ unless both of $t$ and $u$ are negative. Assume $G'$ is a \textbf{double cover} of $G$, more precisely, a central extension $(p,G')$ of $G$ such that $Ker\:p$ is of order $2$ and such that it \emph{does not split}, i.e, there is no homomorphism $i:G\to G'$ so that $p\circ i=id_G$. Since $(\pi,E)$ is the universal central extension of $G$, there exist a homomorphism $q:E\to G'$ such that $p\circ q=\pi$. $q$ maps $C$ onto $Ker\:p$, that is, $Ker\:p$ is a quotient of $C\cong A$. Passing to quotient, we use $\bar f(t,u)\in \mu_2$ to denote the image of $f(t,u)\in A$. They satisfy $a,b,c,d,e$, so in particular, $G'$ is unique up to isomorphism. On the other hand, the Hilbert symbol $(t,u)$ satisfies the relations that $\bar f(t,u)$ satisfies, hence $\bar f(t,u)=(t,u)$. Thus we have

\begin{cor}
Assume $G$ is a simply connected Chevalley group over $\mathbb R$, then there exists a unique (up to isomorphism) double cover $(p,G')$ of $G$. Moreover, an isomorphism $\phi: \mu_2 \to Ker\:p$ is given by $$(t,u)\mapsto h_{\alpha}(t)h_{\alpha}(u)h_{\alpha}(tu)^{-1}$$
where $\alpha$ is a fixed long root and $(t,u)$ is the real Hilbert quadratic symbol. One can write
$$h_{\alpha}(t)h_{\alpha}(u)=(t,u)h_{\alpha}(tu)$$ by identifying $Ker\:p$ and $\mu_2$ via $\phi$. Combining with $A,B,B'$, we get a complete set of relations for $G'$.
\end{cor}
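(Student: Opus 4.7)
The strategy is to specialize Theorems 2.1 and 2.2 to $k=\mathbb{R}$, reducing the existence and uniqueness of the double cover to the analysis of group homomorphisms from the abstract group $A$ of Theorem 2.2 into $\mu_2$. The construction should produce $G'$ as a quotient $E/K$ of the universal central extension, where $K \subset C = \ker\pi$ is the subgroup cut out by the Hilbert symbol.

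First I would verify that the real Hilbert symbol $(t,u)_{\mathbb R}$ satisfies the five relations $(a)$--$(e)$ of Theorem 2.2. Since $(t,u)_{\mathbb R} = -1$ iff both $t,u<0$, this is an elementary case check; alternatively it is the content of Remark 2.4, saying that any norm residue symbol in class field theory satisfies these relations. This produces a well-defined group homomorphism $\overline\phi : A \to \mu_2$ sending $f(t,u) \mapsto (t,u)_{\mathbb R}$, which is surjective because $(-1,-1)_{\mathbb R} = -1$. Transporting along the isomorphism $\phi: A \xrightarrow{\sim} C$ of Theorem 2.2, we obtain a surjection $C \twoheadrightarrow \mu_2$ whose kernel $K$ is central in $E$.

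For existence, set $G' = E/K$ and let $p: G' \to G$ be induced by $\pi$; then $\ker p = C/K \cong \mu_2$ and $G'$ is a central extension. To rule out a splitting, observe that any section $G \to G'$ would force $h_\alpha(t) h_\alpha(u) = h_\alpha(tu)$ in $G'$ for every $t,u$ and every long root $\alpha$; but $h_\alpha(-1) h_\alpha(-1) = (-1,-1)_{\mathbb R}\, h_\alpha(1) = -1 \neq 1$ in $G'$, contradiction. For uniqueness, given any non-split double cover $(p,G')$, the universal property of $(\pi,E)$ supplies a homomorphism $q: E \to G'$ with $p \circ q = \pi$. Its restriction to $C$ is a surjection onto $\mu_2$ (were it trivial, $q$ would descend to a section of $p$, contradicting non-splitness). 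Composing with $\phi$ yields a surjective homomorphism $\overline\psi: A \to \mu_2$. The claim is that $\overline\psi = \overline\phi$, after which $\ker q = K$ and $G' \cong E/K$ canonically. The presentation by $A,B,B'$ together with $h_\alpha(t)h_\alpha(u) = (t,u)_{\mathbb R}\, h_\alpha(tu)$ then follows immediately from Theorem 2.1 applied to $E$ and the definition of $K$.

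The main obstacle is the step $\overline\psi = \overline\phi$: one must see that the relations $(a)$--$(e)$ admit (up to automorphism of $\mu_2$) a unique nontrivial homomorphism to $\mu_2$. The argument uses that $\mathbb R^* = \mathbb R_{>0} \times \{\pm 1\}$ with $\mathbb R_{>0}$ divisible, so that bi-multiplicativity consequences of $(a)$ (together with $(d), (e)$ when needed to handle type $C_n$) force $\overline\psi(f(t,u))$ to depend only on the signs of $t$ and $u$, hence to coincide with the Hilbert symbol. This is morally a tiny case of Matsumoto's theorem identifying $K_2(\mathbb R)_{\text{tors}}$ with $\mu_2$, and I would either invoke it or derive the required vanishing on squares directly from the five relations rather than grind through the general computation.
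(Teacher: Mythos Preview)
Your proposal is correct and follows essentially the same route as the paper: specialize Theorems 2.1 and 2.2 to $k=\mathbb{R}$, realize any non-split double cover as a quotient of $E$ via the universal property, and identify the resulting $\mu_2$-valued symbol $\bar f(t,u)$ with the Hilbert symbol. The paper's argument (given in the paragraph immediately preceding the corollary) is more compressed; in particular it simply asserts ``hence $\bar f(t,u)=(t,u)$'' without justifying that the relations $(a)$--$(e)$ admit a unique nontrivial $\mu_2$-valued solution over $\mathbb{R}$, whereas you correctly isolate this as the one substantive point and sketch the divisibility-of-$\mathbb{R}_{>0}$ argument (equivalently, the computation of $K_2(\mathbb{R})/2$) that pins it down.
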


In the universal cover $E$, let $T$ be the subgroup generated by $h_{\alpha}(t)$, $\alpha\in\Phi, t\in k^{\times}$, it is called the \textbf{metaplectic torus} of $E$. We also refer the image of $T$ in any cover of $G$ as the metaplectic torus. The proposition below lists some relations in $T$.

\begin{prop}
Keep the assumptions in theorem 2.1,2.2. Assume furthermore that $\Phi$ is not of type $C_n$, then we have
$$h_{\alpha}(t)h_{\alpha}(u)=f(t,u)h_{\alpha}(tu)$$ if $\alpha$ is long;
$$h_{\alpha}(t)h_{\alpha}(u)=f(t,u)^{n_{\Phi}}h_{\alpha}(tu)$$ if $\alpha$ is short;
$$(h_{\alpha}(t),h_{\beta}(u))=f(t,u)^{<\alpha,\beta>}$$ if $\alpha,\beta$ are long;
$$(h_{\alpha}(t),h_{\beta}(u))=f(t,u)^{<\alpha,\beta>}$$ if $\alpha$ is long, $\beta$ is short;
$$(h_{\alpha}(t),h_{\beta}(u))=f(t,u)^{<\beta,\alpha>}$$ if $\alpha$ is short, $\beta$ is long;
$$(h_{\alpha}(t),h_{\beta}(u))=f(t,u)^{n_{\Phi}\cdot<\alpha,\beta>}$$ if $\alpha,\beta$ are short. Here $n_{\Phi}=max_{\alpha,\beta\in\Phi}\frac{(\alpha,\alpha)}{(\beta,\beta)}$ and we identify $f(t,u)$ with its image in $C$ via $\phi$.
\end{prop}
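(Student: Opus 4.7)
The strategy is to establish the two single-root formulas $h_\alpha(t)h_\alpha(u)=f(t,u)^?\,h_\alpha(tu)$ first, and then derive the four commutator formulas from them together with the conjugation action of $h_\alpha$ on the root subgroups $U_\beta$.

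For the long-root identity, Theorem 2.2 already asserts $h_{\alpha_0}(t)h_{\alpha_0}(u)=f(t,u)\,h_{\alpha_0}(tu)$ for the fixed long root $\alpha_0$ used to define $\phi$. Since all long roots are $W$-conjugate (Remark 2.3), for any long $\alpha$ I would pick $w\in W$ with $w\alpha_0=\alpha$ and lift it to $\tilde w\in E$ as a product of $w_\gamma(\pm 1)$'s; conjugation by $\tilde w$ preserves the kernel $C$ pointwise (since $C$ is central) and sends $h_{\alpha_0}(t)$ to $h_\alpha(t)$, which transports the identity. The short-root formula must be proved separately, because no Weyl element carries a long root to a short one. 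The plan here is to fix a short root $\alpha$ together with a long root $\gamma$ generating a rank-two subsystem $\Psi\subset\Phi$ of type $B_2$ (for $\Phi=B_n$ or $F_4$) or $G_2$ (for $\Phi=G_2$), then expand a suitable product inside $\Psi$ — for instance, $h_\alpha(t)^{n_\Phi}$ written in terms of long-root $h_\gamma$'s via the Chevalley commutator relation $(B)$ — in two ways and match the central Steinberg symbols that appear. The main technical obstacle is precisely this rank-two bookkeeping: every application of $(B)$ produces a Steinberg symbol via relation $(ab')$ through $(e)$ of Theorem 2.2, and one must verify, independently of the choice of $\gamma$, that the exponents accumulate to exactly $n_\Phi$.

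For the commutator formulas, I would use the identity $h_\alpha(t)\,x_\beta(u)\,h_\alpha(t)^{-1}=x_\beta(t^{\langle\beta,\alpha\rangle}u)$, which holds in $E$ because $x_\beta$ lies in a unipotent subgroup on which the central extension splits. Substituting into $w_\beta(v)=x_\beta(v)\,x_{-\beta}(-v^{-1})\,x_\beta(v)$ yields $h_\alpha(t)\,w_\beta(v)\,h_\alpha(t)^{-1}=w_\beta(t^{\langle\beta,\alpha\rangle}v)$, and then $h_\beta(u)=w_\beta(u)\,w_\beta(-1)$ gives
\[
h_\alpha(t)\,h_\beta(u)\,h_\alpha(t)^{-1} \;=\; w_\beta(t^{\langle\beta,\alpha\rangle}u)\,w_\beta(-t^{\langle\beta,\alpha\rangle}).
\]
Clearing the $w_\beta$'s by means of relation $(B')$ (which expresses each $w_\beta(v)$ as $h_\beta(v)\,w_\beta(-1)^{-1}$ and controls conjugation of $h_\beta$ by $w_\beta(\pm 1)$) rewrites the right-hand side as $h_\beta(u)$ multiplied by a short chain of $h_\beta$-terms in powers of $t$. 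Applying the diagonal formula for $\beta$ (long or short, as just proved) collapses this chain into a single Steinberg symbol: it is $f(t,u)^{\langle\beta,\alpha\rangle}$ when $\beta$ is long, and $f(t,u)^{n_\Phi\langle\beta,\alpha\rangle}$ when $\beta$ is short. Using $\langle\alpha,\beta\rangle(\beta,\beta)=\langle\beta,\alpha\rangle(\alpha,\alpha)$ to rewrite these exponents reproduces each of the four cases stated in the proposition. As a cross-check, the symmetry $(h_\alpha(t),h_\beta(u))^{-1}=(h_\beta(u),h_\alpha(t))$ together with $f(t,u)^{-1}=f(u,t)$ forces the exponent to be symmetric in $(\alpha,\beta)$ under the correct weighting, which the four cases indeed satisfy. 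The chief bookkeeping hazard throughout is the ``phantom'' central factor $w_\beta(1)^2=h_\beta(-1)^{\pm 1}$ produced in the clearing step; once it is pushed through using the Hilbert-symbol relations, the commutator exponents settle into exactly the pattern claimed.
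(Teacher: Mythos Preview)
Your derivation of the commutator identity is correct and coincides with what the paper uses: from $h_\alpha(t)\,x_\beta(u)\,h_\alpha(t)^{-1}=x_\beta(t^{\langle\beta,\alpha\rangle}u)$ one gets $h_\alpha(t)\,h_\beta(u)\,h_\alpha(t)^{-1}=h_\beta(t^{\langle\beta,\alpha\rangle}u)\,h_\beta(t^{\langle\beta,\alpha\rangle})^{-1}$, i.e.
\[
(h_\alpha(t),h_\beta(u))=h_\beta(t^{\langle\beta,\alpha\rangle}u)\,h_\beta(t^{\langle\beta,\alpha\rangle})^{-1}h_\beta(u)^{-1}.\tag{$\ast$}
\]
The paper simply cites this as [5, Lemma 37]. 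Where you and the paper diverge is in the \emph{logical order}, and this is exactly where your argument has a gap.

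You propose to establish the short-root diagonal formula $h_\alpha(t)h_\alpha(u)=f(t,u)^{n_\Phi}h_\alpha(tu)$ first, by a rank-two calculation ``writing $h_\alpha(t)^{n_\Phi}$ in terms of long-root $h_\gamma$'s via the Chevalley commutator relation $(B)$''. But relation $(B)$ governs commutators of the unipotent elements $x_\gamma(\cdot)$; it gives no direct mechanism for expressing a short-root torus element $h_\alpha(t)$ as a product of long-root $h_\gamma$'s. There is no such expression in general, and the ``rank-two bookkeeping'' you allude to does not have an obvious starting point. This step, as written, is not a proof sketch but a hope.

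The paper avoids this entirely by reversing your order. It first feeds the \emph{long}-root diagonal formula (immediate from Theorem~2.2) into $(\ast)$ with $\beta$ long, obtaining $(h_\alpha(t),h_\beta(u))=f(t,u)^{\langle\beta,\alpha\rangle}$ for $\beta$ long and $\alpha$ arbitrary; inversion then also handles $\alpha$ long, $\beta$ arbitrary. Now take $\alpha$ long and $\beta$ short adjacent in $(\ast)$: the left side is already known to equal $f(t,u)^{\langle\alpha,\beta\rangle}=f(t^{\langle\beta,\alpha\rangle},u)^{n_\Phi}$, while the right side is $h_\beta(t^{\langle\beta,\alpha\rangle}u)\,h_\beta(t^{\langle\beta,\alpha\rangle})^{-1}h_\beta(u)^{-1}$. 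Since $\langle\beta,\alpha\rangle=\pm1$ here, $t^{\langle\beta,\alpha\rangle}$ ranges over all of $k^\times$, and you read off the short-root diagonal formula \emph{for free}. The short--short commutator then follows from one more application of $(\ast)$. So the fix is simple: keep your derivation of $(\ast)$, but use it to \emph{deduce} the short diagonal law from the mixed commutator rather than the other way around.
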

\begin{proof}
By [5,lemma 37],
$$(h_{\alpha}(t),h_{\beta}(u))=h_{\beta}(t^{<\beta,\alpha>}u)h_{\beta}(t^{<\beta,\alpha>})^{-1}h_{\beta}(u)^{-1}--(1)$$ for any $\alpha,\beta$. If $\beta$ is long, the right hand side is $f(u,t^{<\beta,\alpha>})^{-1}$, since $\Phi$ is not of type $C_n$, it is equal to $f(t,u)^{<\beta,\alpha>}$. Taking the inverse on both sides, we get $(h_{\beta}(u),h_{\alpha}(t))=f(u,t)^{<\beta,\alpha>}$. Now assume $\beta$ is short, $\alpha$ is long in (1), $h_{\beta}(u)h_{\beta}(t^{<\beta,\alpha>})h_{\beta}(t^{<\beta,\alpha>}u)^{-1}
=(h_{\beta}(u),h_{\alpha}(t))=f(u,t)^{<\alpha,\beta>}=f(u,t^{<\alpha,\beta>})=f(u,t^{<\alpha,\beta>})
=f(u,t^{<\beta,\alpha>})^{\frac{(\alpha,\alpha)}{(\beta,\beta)}}=f(u,t^{<\beta,\alpha>})^{n_{\Phi}}$.
Because $<\beta,\alpha>=\pm 1$, $t^{<\beta,\alpha>}$ runs through all the elements in $k^{\times}$. Finally if both of $\alpha,\beta$ are short,
$(h_{\alpha}(t),h_{\beta}(u))=(h_{\beta}(u)h_{\beta}(t^{<\beta,\alpha>})h_{\beta}(t^{<\beta,\alpha>}u)^{-1})^{-1}
=f(u,t^{<\beta,\alpha>})^{-n_{\Phi}}=f(t,u)^{n_{\Phi}\cdot<\beta,\alpha>}$.
\end{proof}

\begin{remark}
In particular, assume $G$ is a real group and $G'$ is its double cover. The relations above are still true if we replace $f(t,u)$ by $(t,u)$. Because the Hilbert symbol $(t,u)$ is bimultiplicative, by the proof of prop 2.4, one can remove the assumption that $\Phi$ is not of type $C_n$. \end{remark}

\begin{prop}
\begin{align*}
&h_{\alpha}(t)x_{\beta}(u)h_{\alpha}(t)^{-1}=x_{\beta}(t^{<\beta,\alpha>}u)\\
&w_{\alpha}(1)h_{\beta}(t)w_{\alpha}(-1)=h_{w_{\alpha}\beta}(ct)h_{w_{\alpha}\beta}(c)^{-1}\\
\end{align*}
where $c=c(\alpha,\beta)=\pm 1$ is independent of $t$ and $u$.
\end{prop}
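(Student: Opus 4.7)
Both identities are to be proved inside the universal central extension $E$, which by Theorem 2.1 is cut out by relations A, B, B'; the formulas then descend to any central quotient, in particular to the double cover. The key structural observation is that the derivations below invoke only A, B, B' (never the cocycle relations defining $C=\ker\pi$), so the classical Chevalley-group computations apply verbatim to $E$.

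For the first identity I would start with the case $\beta=\alpha$: unwinding $h_\alpha(t)=w_\alpha(t)w_\alpha(-1)$ and applying B' twice (once with $s=-1$ on the inside, once with $s=t$ on the outside) yields
\[
h_\alpha(t)\, x_\alpha(u)\, h_\alpha(t)^{-1} = w_\alpha(t)\, x_{-\alpha}(-u)\, w_\alpha(-t) = x_\alpha(t^{2}u),
\]
which matches $t^{\langle\alpha,\alpha\rangle}u$. The case $\beta=-\alpha$ is symmetric. For $\beta\neq\pm\alpha$ I would invoke the argument of Steinberg [5, Lemma 20]: extract $x_\beta(u)$ from the Chevalley commutator formula B and induct on the height of $\beta$ with respect to $\alpha$. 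This uses only A and B and so lifts to $E$.

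For the second identity, I would first establish the auxiliary formula
\[
w_\alpha(1)\, x_\gamma(u)\, w_\alpha(-1) = x_{w_\alpha\gamma}\!\bigl(c(\alpha,\gamma)\, u\bigr), \qquad c(\alpha,\gamma)\in\{\pm 1\},
\]
again by reduction to rank two. Substituting this into $w_\beta(t)=x_\beta(t)x_{-\beta}(-t^{-1})x_\beta(t)$ three times, and checking the compatibility $c(\alpha,\beta)=c(\alpha,-\beta)$, gives
\[
w_\alpha(1)\, w_\beta(t)\, w_\alpha(-1) = w_{w_\alpha\beta}(ct), \qquad c := c(\alpha,\beta).
\]
Then, using $h_\beta(t)=w_\beta(t)w_\beta(-1)$, the A-only identity $w_\gamma(s)w_\gamma(-s)=1$ to insert $w_\alpha(-1)w_\alpha(1)=1$ after $w_\beta(t)$, and the definition of $h_{w_\alpha\beta}$ together with $w_\gamma(r)^{-1}=w_\gamma(-r)$, one computes
\[
w_\alpha(1)\, h_\beta(t)\, w_\alpha(-1) = w_{w_\alpha\beta}(ct)\, w_{w_\alpha\beta}(-c) = h_{w_\alpha\beta}(ct)\, h_{w_\alpha\beta}(c)^{-1}.
\]

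The main obstacle I anticipate is the sign bookkeeping, specifically the auxiliary formula and the equality $c(\alpha,\beta)=c(\alpha,-\beta)$. The cleanest route is to reduce to the rank-two root subsystems spanned by $\{\alpha,\beta\}$ (types $A_1\!\times\!A_1$, $A_2$, $B_2$, $G_2$) and verify each case directly from the Chevalley commutator formula; functoriality of the construction in Section 2.1 then delivers the general result.
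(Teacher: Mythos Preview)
Your proposal is correct and, in substance, coincides with the paper's own proof: the paper does not argue either identity directly but simply records that ``This is just part of [5, Lemma 37]''. What you have written is precisely an outline of Steinberg's proof of that lemma (together with its prerequisites, Lemmas~19--20 of [5]) --- the $\beta=\pm\alpha$ case from relation (B'), the general $\beta$ case from the commutator relation (B), the auxiliary conjugation formula $w_\alpha(1)x_\gamma(u)w_\alpha(-1)=x_{w_\alpha\gamma}(c(\alpha,\gamma)u)$, and the sign compatibility $c(\alpha,\beta)=c(\alpha,-\beta)$. Since these computations use only A, B, B' (not C), they hold verbatim in $E$, exactly as you note. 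So you have supplied considerably more detail than the paper, but the route is the same.
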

\begin{proof}
This is just part of [5,lemma 37].
\end{proof}

The next proposition gives a description of maximal compact subgroup in the setting of Chevalley groups:
\begin{prop}
Assume $k=\mathbb C$ or $\mathbb R$. Then there exists an automorphism $\sigma$ of $E$ so that $\sigma x_{\alpha}(t)=x_{-\alpha}(-t)$ for any $\alpha\in\Phi$, and an automorphism $\underline\sigma$ of $G$ so that $\underline\sigma\:\underline x_{\alpha}(t)=\underline x_{-\alpha}(-t)$ for any $\alpha\in\Phi$. We have $\sigma h_{\alpha}(t)=h_{\alpha}(t^{-1})$, $\underline \sigma \:\underline h_{\alpha}(t)=\underline h_{\alpha}(t^{-1})$.
Moreover, The group $K$ of fixed points of $\sigma$ is a subgroup of $E$ containing $C=Ker \:\pi$ and the group $\underline K$ of fixed points of $\underline \sigma$ is a maximal compact subgroup of $G$. $K=\pi^{-1}(\underline K)$.
\end{prop}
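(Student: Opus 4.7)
The plan is to construct $\sigma$ on $E$ from the abstract presentation, derive its action on $w_\alpha(t)$ and $h_\alpha(t)$, and then descend to $\underline\sigma$ on $G$. After verifying that $\sigma$ fixes the central kernel $C$ pointwise, I will identify $\underline\sigma$ with the Chevalley involution of the split real form and invoke the Cartan involution theorem to conclude that $\underline K$ is maximal compact; finally a connected-lift argument gives $K = \pi^{-1}(\underline K)$.

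First I would define $\sigma$ on generators by $\sigma(x_\alpha(t)) := x_{-\alpha}(-t)$ and check that the defining relations $(A),(B),(B')$ of $E$ are preserved. $(A)$ is immediate. For $(B)$, the signs introduced by $N_{-\alpha,-\beta} = -N_{\alpha,\beta}$ among the structure constants $c_{ij}$ are absorbed by the factor $(-1)^{i+j}$ coming from $(-t)^i(-u)^j$, so both sides of $(B)$ transform the same way under $\sigma$; $(B')$ is a direct substitution. From $w_\alpha(t) = x_\alpha(t)\,x_{-\alpha}(-t^{-1})\,x_\alpha(t)$ one computes
\[
\sigma(w_\alpha(t)) = x_{-\alpha}(-t)\,x_\alpha(t^{-1})\,x_{-\alpha}(-t) = w_{-\alpha}(-t) = w_\alpha(t^{-1}),
\]
using the identity $w_\alpha(t) = w_{-\alpha}(-t^{-1})$ recalled at the end of Section~2.1, and hence $\sigma(h_\alpha(t)) = \sigma(w_\alpha(t))\sigma(w_\alpha(-1)) = h_\alpha(t^{-1})$. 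The automorphism $\underline\sigma$ of $G$ is constructed by the same recipe (relation $(C)$ is respected automatically), and $\pi\circ\sigma = \underline\sigma\circ\pi$ holds by definition on generators.

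Next I would verify $C \subset K$. By Theorem~2.2, $C$ is generated by the symbols $f(t,u) = h_\alpha(t)h_\alpha(u)h_\alpha(tu)^{-1}$ for a fixed long root $\alpha$, so
\[
\sigma(f(t,u)) = h_\alpha(t^{-1})h_\alpha(u^{-1})h_\alpha((tu)^{-1})^{-1} = f(t^{-1},u^{-1}),
\]
which equals $f(t,u)$ via the bimultiplicativity relations $(ab')$ when $\Phi$ is not of type $C_n$, and directly from $(a)$--$(e)$ otherwise; for $k=\bbR$ this is in any case automatic since $C\cong\mu_2$ admits only the trivial automorphism. For the maximal compactness of $\underline K$, I identify $\underline\sigma$ on the Lie algebra with the Chevalley involution $\theta: X_\alpha \mapsto -X_{-\alpha}$, $H \mapsto -H$, which is a Cartan involution for the split real form; the classical Cartan involution theorem then gives that its fixed points form a maximal compact subgroup.

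Finally, for $K = \pi^{-1}(\underline K)$, the inclusion $K \subset \pi^{-1}(\underline K)$ is immediate from $\pi\circ\sigma = \underline\sigma\circ\pi$. For the reverse, decompose $\tilde K := \pi^{-1}(\underline K) = C\cdot \tilde K^0$, where $\tilde K^0$ is the identity component. Since $\underline K$ is connected and $\pi|_{\tilde K^0}: \tilde K^0 \to \underline K$ is a connected covering, $\sigma|_{\tilde K^0}$ covers $\underline\sigma|_{\underline K} = \mathrm{id}$ and is therefore a deck transformation; it fixes $1 \in E$, and so must be the identity. Combined with $\sigma|_C = \mathrm{id}$ already established, this yields $\tilde K \subset K$. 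The main obstacle in this plan is the sign bookkeeping in the verification that relation $(B)$ is preserved by $\sigma$, which requires careful use of the Chevalley structure constants and the chosen ordering of root pairs; the remaining steps are essentially standard once that check is in place.
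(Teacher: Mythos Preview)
Your argument is correct and follows essentially the same route as the paper: lift the Chevalley involution to $E$ by checking it preserves the defining relations, compute $\sigma h_\alpha(t)=h_\alpha(t^{-1})$ via the identity $w_{-\alpha}(-t)=w_\alpha(t^{-1})$, and then show $\sigma f(t,u)=f(t^{-1},u^{-1})=f(t,u)$ so that $\sigma$ fixes $C$. Two small differences are worth noting: for the identity $f(t^{-1},u^{-1})=f(t,u)$ the paper applies relation $(c)$ of Theorem~2.2 twice (namely $f(t^{-1},u^{-1})=f(u,t^{-1})=f(t,u)$), which handles all types including $C_n$ uniformly, whereas your bimultiplicativity argument via $(ab')$ needs the non-$C_n$ hypothesis; and the paper simply cites Steinberg for the existence of $\underline\sigma$ and the maximal compactness of $\underline K$, and states $K=\pi^{-1}(\underline K)$ without proof, while you sketch these directly via the Cartan involution and a deck-transformation argument.
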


\begin{proof}
This is basically [5,theorem 16], which proves the existence of $\underline \sigma$ and $\underline K$ for $G$. In particular, $x_{\alpha}(t)\mapsto x_{-\alpha}(-t)$, $\forall \alpha\in \Phi$ preserves the relations (A) and (B). Hence $\underline \sigma$ can be lifted to an automorphism of $E$ which we denote by $\sigma$ such that $\sigma x_{\alpha}(t)=x_{-\alpha}(-t)$ for any $\alpha\in\Phi$. By the definition of $w_{\alpha}(t)$, $\sigma w_{\alpha}(t)=w_{-\alpha}(-t)$. So $\sigma h_{\alpha}(t)=\sigma w_{\alpha}(t)w_{\alpha}(-1)=\sigma w_{\alpha}(t)\sigma w_{\alpha}(-1)= w_{-\alpha}(-t)w_{-\alpha}(1)$. Since $w_{\alpha}(t)=w_{-\alpha}(-t^{-1})$ for any $\alpha\in \Phi$, $t\in k^{\times}$, the last term is $w_{\alpha}(t^{-1})w_{\alpha}(-1)=h_{\alpha}(t^{-1})$. Thus $\sigma h_{\alpha}(t)=h_{\alpha}(t^{-1})$ as in the linear case. $\sigma$ fixes $C$. In fact, with the notation of theorem 2.2, $C$ is generated by $f(t,u)$, $t,u\in k^{\times}$ if we identify the groups $A$, $C$ via $\phi$, and $h_{\alpha}(t)h_{\alpha}(u)=f(t,u)h_{\alpha}(tu)$. Let $\sigma$ act on both sides, one has $h_{\alpha}(t^{-1})h_{\alpha}(u^{-1})=\sigma f(t,u)h_{\alpha}(t^{-1}u^{-1})$, which implies that $\sigma f(t,u)=f(t^{-1},u^{-1})$. By relation (c) in theorem 2.2, $f(t^{-1},u^{-1})=f(u,t^{-1})=f(t,u)$ and hence $\sigma$ fixes $C$.
\end{proof}

For the rest of this paper, we use $\underline G$ to denote a simply connected Chevalley group over $\mathbb R$, $G$ the \emph{double cover} of $\underline G$. For any subgroup $H$ of $G$, let $\underline H$ be the image of $H$ under the covering projection $p:G\to \underline G$. Define the real metaplectic torus $T$ to be the subgroup of $G$ generated by $h_{\alpha}(t)$ with $\alpha\in\Phi$ and $t\in \mathbb R^*$. Let $A\cong(\mathbb R^{\times})^l$ be the subgroup of $T$ generated by $h_{\alpha}(t)$ with $\alpha\in\Phi$, $t>0$, here $l$ is the rank of $\Phi$. By remark 3, $p|_A:A\to \underline A$ is an isomorphism, and hence for simplicity, we just use $A$ to denote this group. Let $M$ be the subgroup of $T$ generated by $h_{\alpha}(-1)$ with $\alpha\in\Phi$. It is easy to see that $A$ is in the center of $T$, and $T$ is the direct product of $A$ and $M$. $M$ is a central extension of $\underline M \cong (\mathbb Z/2\mathbb Z)^{l}$ by $\mu_2=\{\pm 1\}$. Let $\Delta$ be a set of simple roots, $\Phi^+$ be the corresponding set of positive roots. Let $N$ be the group generated by $x_{\alpha}(t)$ with $\alpha\in\Phi^{+},t\in\mathbb R$. Then $p|_N: N\to \underline N$ is an isomorphism, and hence for simplicity, we just use $N$ to denote this group. Define $P$ to be subgroup of $G$ generated by $N$ and $T$, which we call a minimal parabolic subgroup(or Borel subgroup). We have the Langlands decomposition $P=NAM$. By proposition 2.6, there exists an automorphism $\sigma$ of $G$ so that $\sigma x_{\alpha}(t)=x_{-\alpha}(-t)$, $\forall \alpha \in \Phi$. Similarly for $\underline G$. The group $K$ of fixed points of $\sigma$ is a maximal compact subgroup of $G$ which is the double cover of $\underline K$. It is easy to see that $M$(resp. $\underline M$) is a subgroup of $K$(resp. $\underline K$). One has $Z_{\underline K}(A)=\underline M$ which implies that $Z_{K}(A)=M$. Define the Weyl group $W$ to be $N_K(A)/Z_K(A)=N_K(A)/M$. $W$ is isomorphic to $N_{\underline K}(A)/\underline M$.

\begin{lemma}
$w_{\alpha}(1)\in N_K(A)$ for any $\alpha\in\Phi$, and their images in $N_K(A)/M$ generate $W$.
\end{lemma}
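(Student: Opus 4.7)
The plan is to verify the two assertions in sequence: membership of $w_\alpha(1)$ in $K$, membership in the normalizer of $A$, and finally the generation of $W$. The ingredients are already in Propositions 2.4 and 2.6 together with the relation $w_\alpha(t) = w_{-\alpha}(-t^{-1})$ recorded just after the relations $(A),(B'),(C)$; the only real bookkeeping issue is that, in the double cover, certain multiplication identities for the $h_\alpha$'s carry Hilbert-symbol cocycles, and one must check that these cocycles vanish on the positive-real arguments that parameterize $A$.

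For $w_\alpha(1)\in K$ I would apply $\sigma$ directly. Using $\sigma x_\alpha(t) = x_{-\alpha}(-t)$ from Proposition 2.6,
\[\sigma(w_\alpha(1)) = \sigma\bigl(x_\alpha(1) x_{-\alpha}(-1) x_\alpha(1)\bigr) = x_{-\alpha}(-1) x_\alpha(1) x_{-\alpha}(-1) = w_{-\alpha}(-1),\]
and the identity $w_\alpha(t) = w_{-\alpha}(-t^{-1})$ at $t = 1$ gives $w_{-\alpha}(-1) = w_\alpha(1)$, so $\sigma$ fixes $w_\alpha(1)$.

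For $w_\alpha(1)\in N_K(A)$, note first that $h_\alpha(1)^2 = h_\alpha(1)$ by Proposition 2.4 (the symbol $(1,1)$ is trivial), hence $h_\alpha(1) = 1$, and therefore $w_\alpha(-1) = w_\alpha(1)^{-1}$. Substituting into the second formula of Proposition 2.6,
\[w_\alpha(1)\, h_\beta(t)\, w_\alpha(1)^{-1} = h_{w_\alpha\beta}(ct)\, h_{w_\alpha\beta}(c)^{-1}, \qquad c = c(\alpha,\beta) = \pm 1.\]
For $t > 0$ the case $c = 1$ yields $h_{w_\alpha\beta}(t) \in A$ immediately. The case $c = -1$ is handled by writing $h_{w_\alpha\beta}(-t) = h_{w_\alpha\beta}(-1)\, h_{w_\alpha\beta}(t)$ — the Hilbert symbol $(-1,t)$ is trivial for $t > 0$, by Proposition 2.4 and Remark 3 — and using that $h_{w_\alpha\beta}(-1)$ commutes with $h_{w_\alpha\beta}(t)$ (same Proposition); the right-hand side then collapses to $h_{w_\alpha\beta}(t)\in A$. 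Thus $w_\alpha(1)$ preserves $A$ under conjugation, and a symmetric argument for $w_\alpha(1)^{-1}$ (same form) shows $w_\alpha(1) \in N_K(A)$.

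For the generation statement, the excerpt already identifies $W = N_K(A)/M$ with $N_{\underline K}(A)/\underline M$ via the covering projection $p$, and $p(w_\alpha(1)) = \underline w_\alpha(1)$. In the linear Chevalley group $\underline G$ it is classical (and a direct consequence of Proposition 2.6 read modulo $M$) that $\underline w_\alpha(1)$ acts on $A$ as the reflection $s_\alpha$, and these reflections generate the abstract Weyl group of $\Phi$; pulling back, the images of $w_\alpha(1)$ in $N_K(A)/M$ generate $W$. The only genuinely non-routine piece is the $c = -1$ subcase above: one must verify that no $\mu_2$-valued cocycle survives so that the conjugate truly lies in $A$ rather than in $AM$, and this works precisely because $A$ is parameterized by positive real scalars, on which every Hilbert symbol appearing in Proposition 2.4 is trivial.
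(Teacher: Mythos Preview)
Your argument is correct and follows the same line as the paper's proof: fix $w_\alpha(1)$ under $\sigma$ via $w_\alpha(1)=w_{-\alpha}(-1)$, then invoke the conjugation formula $w_\alpha(1)h_\beta(t)w_\alpha(-1)=h_{w_\alpha\beta}(ct)h_{w_\alpha\beta}(c)^{-1}$ to see that $A$ is preserved, and finally identify the image of $w_\alpha(1)$ with the reflection $s_\alpha$. Your treatment is in fact more careful than the paper's, which simply asserts that the conjugation formula shows $w_\alpha(1)$ normalizes $A$ without spelling out the $c=-1$ case; one small slip is that the conjugation formula you quote is the second relation of Proposition~2.5, not Proposition~2.6.
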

\begin{proof}
Since $\sigma w_{\alpha}(1)=w_{-\alpha}(-1)=w_{\alpha}(1)$, $w_{\alpha}(1)\in K$. Also by the second relation in prop 2.5, it normalizes $A$. $w_{\alpha}(1)$ corresponds to the reflection $s_{\alpha}$ through the hyperplane determined by $\alpha$, which gives an isomorphism between $W=N_K(A)/Z_K(A)$ the Weyl group $\hat W$ defined in the abstract root system setting. In particular, $w_{\alpha}(1)$, $\alpha\in \Phi$ generate $W$.
\end{proof}

\subsection{The group $SL(2,\mathbb R)$ and its double cover $\widetilde{SL}(2,\mathbb R)$}

In this section, we recall some basic facts about $SL(2,\mathbb R)$ and its double cover $\widetilde{SL}(2,\mathbb R)$, which are important for the study of representation theory of general covering groups.

$\underline G=SL(2,\mathbb{R})$ may be described in Steinberg symbols: let $X=\begin{pmatrix}0&1\\0&0\end{pmatrix}$, $Y=\begin{pmatrix}0&0\\1&0\end{pmatrix}$, $H=\begin{pmatrix}1&0\\0&-1\end{pmatrix}$ be the $\mathfrak{sl}_2$ triple. For $t\in\mathbb R$, define
\begin{align*}
&\underline x(t)=exp(tX)=\begin{pmatrix}1&t\\0&1\end{pmatrix}\\
&\underline y(t)=exp(tY)=\begin{pmatrix}1&0\\t&1\end{pmatrix}\\
&\underline w(t)=\underline x(t)\underline y(-t^{-1})\underline x(t)=\begin{pmatrix}0&t\\-t^{-1}&0\end{pmatrix}\\
&\underline h(t)=\underline w(t)\underline w(-1)=\begin{pmatrix}t&0\\0&t^{-1}\end{pmatrix}\\
\end{align*}
Let $N$ be the subgroup generated by $\underline x(t)$, $t\in\mathbb R$, $A$ be the subgroup generated by $\underline h(t)$, $t>0$. $\underline K=SO(2)$ consists of $r_{\phi}=\begin{pmatrix} cos\phi&-sin\phi\\sin\phi&cos\phi\\ \end{pmatrix}$, $\phi\in\mathbb R$. $\underline G=NA\underline K$.
Let $\underline M=\{\underline h(\pm 1)\}\in \underline K$, then the subgroup $\underline P$ of upper triangular matrices has the Langlands decomposition $\underline P=NA\underline M$.

By Corollary 2.3, there exists a unique nontrivial double cover $G=\widetilde{SL}(2,\mathbb R)$ of $\underline G=SL(2,\mathbb{R})$, that is, a central extension of $\underline G$ by $\mu_2=\{\pm 1\}$. We use $p$ to denote the covering map. It is generated by the symbols $x(t)$, $y(t)$ satisfying the same relations as that of $\underline G$ except that $h(t)h(u)=(t,u)h(tu)$ where $(,)$ is the real Hilbert quadratic symbol. $\phi: N\to \widetilde{SL}(2,\mathbb R)$, $\underline x(t)\mapsto x(t)$, $t\in \mathbb R$, is a group homomorphism; $\psi: A\to \widetilde{SL}(2,\mathbb R)$ $\underline h(t)\mapsto h(t)$, $t>0$, is also a group homomorphism. Moreover, $\phi$ is the only homomorphism from $N$ to $\widetilde{SL}(2,\mathbb R)$ satisfying $p\circ \phi=Id_N$. In fact, assume $\phi'$ is another one, consider $f: N\to \mu_2$, $n\mapsto \phi(n)\phi'(n)^{-1}$, we have $f(\underline x(t))=f(\underline x(t/2)^2)=f(\underline x(t/2))^2=1$. So $f$ is trivial, whence $\phi=\phi'$. Similar fact holds for $\psi$. We still denote the images of $\phi, \psi$ by $N,A$. Let $K$ be the subgroup fixed by the automorphism $\sigma$ of $G$ where $\sigma$ sends $x(t)$ to $y(-t)$, $y(t)$ to $x(-t)$. $K$ is a double cover of $SO(2)$. We have the Iwasawa decomposition $G=NAK$. Let $M=\{\pm h(\pm 1)\}\subset K$, it is isomorphic to $C_4$, the cyclic group of order four, and it is an extension of $\underline M$ by $\mu_2$. $P=NAM$ is an extension of $\underline P$ by $\mu_2$.

We may also describe the group structure of $\widetilde{SL}(2,\mathbb R)$ using Kubota cocycle. The only reason why we introduce this is that Kubota cocycle makes some calculations involving $K$ more explicit. It will be used in section 5. $\widetilde{SL}(2,\mathbb{R})=SL(2,\mathbb{R})\times \mu_2$ as a set, the group law is given by \begin{center} $(g,\varepsilon)(g',\varepsilon')=(gg',\varepsilon\varepsilon'c(g,g'))$ \\ \end{center} $c$ is called the \emph{Kubota cocycle}, which is given by the formula \begin{center} $c(g,g')=(x(g),x(g'))(-x(g)x(g'),x(gg'))$\\ \end{center} where

$$ x(\begin{pmatrix} a&b\\c&d\\ \end{pmatrix})=
\begin{cases}
c & \text{if $c\neq0$}\\
d & \text{if $c=0$}
\end{cases}
$$
and $(,)$ is the quadratic Hilbert symbol. $x(t)\mapsto (\underline x(t),1)$, $y(t)\mapsto (\underline y(t),1)$ gives an isomorphism between the two definitions. Direct calculation using Kubota cocycle shows that $w(t)\mapsto (\underline w(t),1)$ and $h(t)\mapsto (\underline h(t),sgn(t))$. Thus we may write
\begin{prop}
\begin{align*}
&x(t)=(\underline x(t),1)\\
&y(t)=(\underline y(t),1)\\
&w(t)=(\underline w(t),1)\\
&h(t)=(\underline h(t),sgn(t))\\
\end{align*}

\end{prop}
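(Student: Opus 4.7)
The plan is to use the uniqueness of the non-trivial double cover from Corollary~2.3 to identify the Steinberg-symbol description with the Kubota-cocycle description, and then to compute $w(t)$ and $h(t)$ directly in the Kubota picture. The Kubota cocycle $c$ takes values in $\mu_2$ and defines a non-split central extension of $SL(2,\mathbb R)$ by $\mu_2$, so by Corollary~2.3 the set $SL(2,\mathbb R)\times \mu_2$ endowed with that multiplication is isomorphic to $\widetilde{SL}(2,\mathbb R)$. By the divisibility argument given in the paragraph just above the proposition, the section $\underline x(t)\mapsto (\underline x(t),1)$ is the unique splitting of $N$, so the assignment $x(t)\mapsto (\underline x(t),1)$ is forced once the isomorphism of double covers is chosen, and the same reasoning (applied to the opposite unipotent) gives the second formula.

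For the third formula, I would evaluate $w(t)=x(t)y(-t^{-1})x(t)$ by two applications of the multiplication rule. The first product $(\underline x(t),1)(\underline y(-t^{-1}),1)$ requires $c(\underline x(t),\underline y(-t^{-1}))$; using $x(\underline x(t))=1$, $x(\underline y(-t^{-1}))=-t^{-1}$, and $x(\underline x(t)\underline y(-t^{-1}))=-t^{-1}$, this cocycle simplifies to $(1,-t^{-1})(t^{-1},-t^{-1})$, both factors of which equal $1$ (the second because $t^{-1}$ and $-t^{-1}$ have opposite signs, so they cannot both be negative). A further multiplication by $(\underline x(t),1)$, with $x(\underline w(t))=-t^{-1}$ after passing through the product, produces another cocycle equal to $1$ for the same reason. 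This yields $w(t)=(\underline w(t),1)$.

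For the fourth formula, I would compute $h(t)=w(t)w(-1)=(\underline w(t),1)(\underline w(-1),1)$ using $x(\underline w(t))=-t^{-1}$, $x(\underline w(-1))=1$, and $x(\underline h(t))=t^{-1}$ (the lower-left entry of $\underline h(t)$ is zero, so one reads off the lower-right entry instead). The cocycle then reduces to $(-t^{-1},1)(t^{-1},t^{-1})=(t^{-1},t^{-1})$. Invoking the identity $(a,a)=(a,-1)$, which follows from relation $(d')$ of Theorem~2.2 since $(a,-a)=1$, this equals $-1$ precisely when $t<0$; in other words it equals $\sgn(t)$, and hence $h(t)=(\underline h(t),\sgn(t))$.

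The main obstacle is the bookkeeping in the function $x(g)$, which requires a case split on whether the lower-left entry vanishes, together with the fact that the real Hilbert symbol's vanishings come from sign arguments rather than from a single algebraic identity. As a consistency check, one can verify that the resulting images respect the defining cover relation $h(t)h(u)=(t,u)h(tu)$: the cocycle of two diagonal elements reduces to $(t^{-1},u^{-1})=(t,u)$, which combined with the $\sgn$ factors on either side reproduces exactly the modified relation~(C).
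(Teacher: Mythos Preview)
Your proof is correct and follows exactly the approach the paper indicates: the paper simply asserts that ``$x(t)\mapsto(\underline x(t),1)$, $y(t)\mapsto(\underline y(t),1)$ gives an isomorphism between the two definitions'' and that ``direct calculation using Kubota cocycle shows that $w(t)\mapsto(\underline w(t),1)$ and $h(t)\mapsto(\underline h(t),\sgn(t))$,'' without spelling out the cocycle evaluations. You have supplied precisely those omitted computations, and your sign analyses for $(t^{-1},-t^{-1})=1$ and $(t^{-1},t^{-1})=\sgn(t)$ are correct.
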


The exponential map $$\underline{exp}: \mathfrak{sl}(2,\mathbb R)\to SL(2,\mathbb R)$$ is given by the exponents of matrices. In particular,
\begin{align*}
&\underline{exp}(tX)=\underline x(t)\\
&\underline{exp}(tH)=\underline h(e^{t})\\
&\underline{exp}(-tZ)=r_t\\
\end{align*}
where $Z=X-Y$.

\begin{prop}
Let $\underline e:\mathbb R\to SO(2)$ be the homomorphism sending $\phi$ to $r_{\phi}$. Then there exists a unique homomorphism $e:\mathbb R \to K$ so that $p\circ e=\underline e$. It is given by $e(\phi)=(r_{\phi},\epsilon(\phi/2))$ where $\epsilon: \mathbb{R}/2\pi\mathbb{Z}\to \pm 1$ is defined by $\epsilon(\theta)=sgn(sin\theta sin2\theta)$ when $\theta\neq 0,\pi/2,\pi,3\pi/2$, $\epsilon(0)=1$, $\epsilon(\pi/2)=-1$, $\epsilon(\pi)=-1$, $\epsilon(3\pi/2)=1$.
\end{prop}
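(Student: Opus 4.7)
The plan is to dispatch existence and uniqueness of a group-homomorphism lift abstractly, and then identify the lift with the claimed formula via a Kubota-cocycle computation.

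For \textbf{uniqueness}, if $e_1, e_2 : \bbR \to K$ are two homomorphism lifts of $\underline e$, then $\chi := e_1 \cdot e_2^{-1}$ takes values in $\ker p = \mu_2$ and is a group homomorphism $\bbR \to \mu_2$. Since $\bbR$ is $2$-divisible and $\mu_2$ has exponent $2$, $\chi(\phi) = \chi(\phi/2)^2 = 1$ for every $\phi$, so $e_1 = e_2$; this argument uses no continuity. For \textbf{existence}, by Proposition~2.6 the restriction $p|_K : K \to \SO(2)$ is a topological double cover of connected Lie groups, and since $\bbR$ is simply connected, $\underline e$ admits a unique continuous lift $e : \bbR \to K$ with $e(0) = 1$. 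The map $(\phi_1, \phi_2) \mapsto e(\phi_1+\phi_2)\, e(\phi_2)^{-1}\, e(\phi_1)^{-1}$ is continuous from the connected space $\bbR^2$ into the discrete set $\mu_2$ and sends $(0,0)$ to $1$, so it is identically $1$; hence $e$ is a homomorphism.

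It remains to identify this $e$ with the claimed formula. Set $\tilde e(\phi) := (r_\phi, \epsilon(\phi/2))$ in the Kubota presentation of Proposition~2.8; since $p \circ \tilde e = \underline e$, by the uniqueness established above it suffices to check that $\tilde e$ is a homomorphism, that is,
$$c(r_{\phi_1}, r_{\phi_2})\, \epsilon(\phi_1/2)\, \epsilon(\phi_2/2) \;=\; \epsilon\bigl((\phi_1+\phi_2)/2\bigr).$$
Using the Kubota formula $c(g, g') = (x(g), x(g'))\cdot(-x(g)x(g'), x(gg'))$ with $x(r_\phi) = \sin\phi$ when $\sin\phi \neq 0$ (and $\cos\phi$ otherwise), together with the fact that the real Hilbert symbol $(a,b)$ equals $-1$ exactly when $a, b < 0$, this reduces to a sign identity. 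The simplification $\sin\theta \sin 2\theta = 2\sin^2\theta \cos\theta$ gives $\epsilon(\phi/2) = \sgn \cos(\phi/2)$ whenever $\cos(\phi/2) \neq 0$, while the specially assigned values of $\epsilon$ at $\pi/2$ and $3\pi/2$ cover the exceptional angles $\phi \in \pi + 2\pi\bbZ$. Combined with the half-angle identities $\sin\phi_i = 2\sin(\phi_i/2)\cos(\phi_i/2)$ and $\cos((\phi_1+\phi_2)/2) = \cos(\phi_1/2)\cos(\phi_2/2) - \sin(\phi_1/2)\sin(\phi_2/2)$, the desired equation follows from a case analysis on the signs of $\sin(\phi_i/2)$ and $\cos(\phi_i/2)$ for $i = 1,2$.

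The main obstacle is organizing this case analysis so that the two sign factors from the Hilbert symbols in $c$ combine with $\epsilon(\phi_1/2)\epsilon(\phi_2/2)$ to match $\epsilon((\phi_1+\phi_2)/2)$; a secondary nuisance is the boundary locus where $\sin\phi_i$ or $\sin(\phi_1+\phi_2)$ vanishes, forcing $x(r_\phi)$ to jump branches, but those exceptional angles are checked directly using the explicit special values of $\epsilon$ at $0, \pi/2, \pi, 3\pi/2$.
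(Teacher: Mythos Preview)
Your argument is correct: the uniqueness via $2$-divisibility and the existence via covering-space lifting are clean, and the two-variable cocycle identity you set up does reduce, after the half-angle simplifications you indicate, to a finite (if tedious) sign check.

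The paper's identification step is shorter, and it is worth comparing. Rather than verifying the full relation $\tilde e(\phi_1)\tilde e(\phi_2)=\tilde e(\phi_1+\phi_2)$ for all pairs, the paper uses only the \emph{squaring} relation $e(\theta)e(\theta)=e(2\theta)$. Writing $e(\phi)=(r_\phi,\epsilon(\phi/2))$, this single relation reads
\[
\epsilon(\theta/2)^2\,c(r_\theta,r_\theta)=\epsilon(\theta),
\]
and since $\epsilon(\theta/2)^2=1$ one gets $\epsilon(\theta)=c(r_\theta,r_\theta)$ outright. The cocycle then simplifies via $(a,a)=(-1,a)$ to $\epsilon(\theta)=(-1,x(r_\theta)x(r_{2\theta}))=\sgn\bigl(x(r_\theta)x(r_{2\theta})\bigr)$, which is $\sgn(\sin\theta\sin2\theta)$ generically, with the four special angles checked by hand. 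This works because abstract existence is already in hand, so one only needs to \emph{determine} the unique $\epsilon$, not to \emph{verify} that the resulting formula is multiplicative. Your route has the advantage of being self-contained (it shows directly that $\tilde e$ is a homomorphism, making the covering-space existence argument redundant), while the paper's route trades that for a one-variable computation with no case analysis.
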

\begin{proof}
It is clear that $e$ is of the form appear in the statement for some $\epsilon: \mathbb{R}/2\pi\mathbb{Z}\to \pm 1$. By working out $exp(\theta)exp(\theta)=exp(2\theta)$, one sees that $\epsilon(\theta)=(x(r_{\theta}),x(r_{\theta}))(-1,x(r_{2\theta}))=(-1,x(r_{\theta})x(r_{2\theta}))=sgn(x(r_{\theta})x(r_{2\theta}))$. Direct calculations show that $x(r_{\theta})x(r_{2\theta})=sin\theta sin2\theta$ when $\theta\neq 0,\pi/2,\pi,3\pi/2$, $0\leq \theta <2\pi$ and $\epsilon(0)=1$, $\epsilon(\pi/2)=-1$, $\epsilon(\pi)=-1$, $\epsilon(3\pi/2)=1$.
\end{proof}

\begin{cor}
For any integer $n$, $\sigma_{n/2}: K\to S^1$, $(r_{\phi},\epsilon(\phi/2))\mapsto e^{in\phi/2}$ is a character of $K$. In particular $\sigma=\sigma_{n/2}|_M$ is a character of $M$ such that $\sigma(I,1)=1,\sigma(-I,-1)=i^n,\sigma(I,-1)=(-1)^n,\sigma(-I,1)=(-i)^n$.
\end{cor}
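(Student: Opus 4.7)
The plan is to realize $\sigma_{n/2}$ as a character by factoring it through the homomorphism $e:\mathbb{R}\to K$ constructed in Proposition 2.11. The key preliminary observation is that $e$ is surjective with kernel $4\pi\mathbb{Z}$, so $K\cong\mathbb{R}/4\pi\mathbb{Z}$ as a topological group, and then $\phi\mapsto e^{in\phi/2}$ descends to a character of $K$ exactly when $n\in\mathbb{Z}$.

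First I would verify surjectivity of $e$. Using the explicit values from Proposition 2.11, $e(2\pi)=(r_{2\pi},\epsilon(\pi))=(I,-1)$, which is the nontrivial element of $\ker p$. Since $p\circ e=\underline{e}$ maps $\mathbb{R}$ onto $\underline{K}=SO(2)$, this gives $e(\mathbb{R})\supseteq\ker p$ and $p(e(\mathbb{R}))=\underline{K}$, so $e(\mathbb{R})=K$. For the kernel, $e(\phi)=(I,1)$ forces $\phi\in 2\pi\mathbb{Z}$, and writing $\phi=2k\pi$ the remaining condition $\epsilon(k\pi)=1$ reduces, via $\epsilon(0)=1$ and $\epsilon(\pi)=-1$ and the period-$2\pi$ property of $\epsilon$, to $k$ even. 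Hence $\ker e=4\pi\mathbb{Z}$.

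Next I would define $\chi_n:\mathbb{R}\to S^1$ by $\chi_n(\phi)=e^{in\phi/2}$. This is a continuous group homomorphism, and because $n$ is an integer, $\chi_n(4\pi)=e^{2in\pi}=1$, so $\chi_n$ vanishes on $\ker e$ and descends through the isomorphism $\mathbb{R}/4\pi\mathbb{Z}\cong K$ to a character $\sigma_{n/2}:K\to S^1$ satisfying $\sigma_{n/2}(e(\phi))=e^{in\phi/2}$. This is the statement of the first half of the corollary.

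For the values on $M$, I would use Proposition 2.9, which says $h(t)=(\underline{h}(t),\mathrm{sgn}(t))$, to write out the four elements of $M=\{\pm h(\pm 1)\}$ as pairs in $SL(2,\mathbb{R})\times\mu_2$, and then match each with a preimage under $e$ in the fundamental domain $[0,4\pi)$: one gets $h(1)=(I,1)=e(0)$, $h(-1)=(-I,-1)=e(\pi)$ since $\epsilon(\pi/2)=-1$, $-h(1)=(I,-1)=e(2\pi)$, and $-h(-1)=(-I,1)=e(3\pi)$ since $\epsilon(3\pi/2)=1$. Plugging $\phi=0,\pi,2\pi,3\pi$ into $e^{in\phi/2}$ yields $1,i^n,(-1)^n,(-i)^n$ respectively, matching the claim. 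The only real work here is the bookkeeping of the $\epsilon$ values; there is no genuine obstacle, the content of the corollary is almost entirely in the identification $K\cong\mathbb{R}/4\pi\mathbb{Z}$ established via $e$.
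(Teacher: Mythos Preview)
Your argument is correct and is exactly the approach the paper intends: the corollary is stated without proof because it follows immediately from the preceding proposition constructing the surjection $e:\mathbb{R}\to K$ with kernel $4\pi\mathbb{Z}$, and you have simply written out those details. One bookkeeping remark: what you cite as ``Proposition~2.11'' for the homomorphism $e$ is Proposition~2.9 in the paper, and what you cite as ``Proposition~2.9'' for $h(t)=(\underline{h}(t),\mathrm{sgn}(t))$ is Proposition~2.8; the mathematics is unaffected.
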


There exists a unique exponential map $$exp: \mathfrak{sl}(2,\mathbb R)\to \widetilde{SL}(2,\mathbb R)$$ such that $p\circ exp=\underline{exp}$. In fact, for any $X\in \mathfrak{sl}(2,\mathbb R)$, let $\gamma(t)$ be the unique one parameter subgroup of $\underline G$ whose tangent vector at the identity is equal to $X$. Since $p$ is a covering map, $t\mapsto \gamma(t)$ can be partially lifted to a continuous map $\tilde\gamma: I\to G$ such that it pushes forward to $\gamma|_I$ for some neighborhood $I\subset \mathbb R$ around $0$ and $\tilde\gamma(0)=1\in G$. Since $\gamma$ is a continuous homomorphism, one can extend $\tilde\gamma$ to a homomorphism from $\mathbb R$ to $G$ which lifts $\gamma$. We define $exp(X)$ to be $\tilde\gamma(X)$. We have

\begin{prop}

\begin{align*}
&exp(tX)=(\underline x(t),1)\\
&exp(tH)=(\underline h(e^{t}),1)\\
&exp(-tZ)=(r_t,\epsilon(t/2))\\
\end{align*}

\end{prop}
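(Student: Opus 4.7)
The plan is to verify each of the three proposed formulas by exhibiting, for the relevant $Y\in\mathfrak{sl}(2,\mathbb R)$, a continuous homomorphism $\tilde\gamma: \mathbb R \to G$ whose projection under $p$ is the classical one-parameter subgroup $\gamma(t)=\underline{exp}(tY)$. Because $\mathbb R$ is simply connected, such a lift is unique once $\tilde\gamma(0)=1$ is fixed, and any two continuous homomorphism lifts differ by a continuous homomorphism $\mathbb R\to\mu_2$, which must be trivial. Hence $exp(tY)$ is forced to equal any such $\tilde\gamma(t)$, and the task reduces in each case to producing the lift and checking the homomorphism property.

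For $Y = X$, Proposition 2.8 identifies the Steinberg symbol $x(t)$ with $(\underline x(t),1)$, and relation $(A)$ of Section 2.1 gives $x(t)x(s) = x(t+s)$. Therefore $t\mapsto (\underline x(t),1)$ is a continuous homomorphism lifting $\underline{exp}(tX) = \underline x(t)$, which settles the first formula. For $Y = H$, Proposition 2.8 yields $h(e^t) = (\underline h(e^t),\mathrm{sgn}(e^t)) = (\underline h(e^t),1)$, and the multiplication law in the double cover from Corollary 2.3 gives $h(e^t)h(e^s) = (e^t,e^s)\,h(e^{t+s}) = h(e^{t+s})$, the Hilbert symbol collapsing because both $e^t$ and $e^s$ are positive. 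Thus $t\mapsto (\underline h(e^t),1)$ is a continuous homomorphism lifting $\underline h(e^t) = \underline{exp}(tH)$, and uniqueness yields the second formula.

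For $Y = -Z$, the work is essentially done in Proposition 2.10: the map $e: \mathbb R \to K$, $\phi\mapsto (r_\phi,\epsilon(\phi/2))$, was constructed there as the unique continuous homomorphism lifting $\underline e(\phi) = r_\phi$. Since $\underline{exp}(-tZ) = r_t = \underline e(t)$, the one-parameter subgroup $t\mapsto e(t)$ lifts $t\mapsto r_t$, and uniqueness forces $exp(-tZ) = (r_t,\epsilon(t/2))$. There is no real obstacle beyond invoking Propositions 2.8 and 2.10: the mild subtlety lies entirely in the homomorphism check in the $X$- and $H$-cases, and in both the relevant Kubota cocycle is trivial — trivially for the unipotent $\underline x(t)$, and for $\underline h(e^t)$ because no Hilbert symbol of two positive reals is nontrivial.
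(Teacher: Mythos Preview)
Your argument is correct and matches the paper's approach exactly: the paper's proof is the one-liner ``The first two are obvious and the last equality follows from proposition 2.9,'' and you have simply spelled out why the first two are obvious (the relevant lifts are homomorphisms because the cocycle is trivial on positives) and invoked the same proposition for the third. Note a small numbering slip: the result on the lift $e:\mathbb R\to K$ is Proposition~2.9 in the paper's numbering, not 2.10 (which is the corollary on the characters $\sigma_{n/2}$).
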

\begin{proof}
The first two are obvious and the last equality follows from proposition 2.9.
\end{proof}

\subsection{Connections between $\widetilde{SL}(2,\mathbb R)$ and general covering groups}

Let $G$ be the unique nontrivial two-fold cover of a split real group $\underline G$. For each root $\alpha$,
$$\Phi_{\alpha}:\widetilde{SL}(2,\mathbb R)\to G$$
is defined to be the homomorphism sending $x(t)$ to $x_{\alpha}(t)$, $y(t)$ to $x_{-\alpha}(t)$, $h(t)$ to $h_{\alpha}(t)$.

We now make the following definition from [1], which will be used later:
\begin{definition}
A root $\alpha$ is said to be \emph{metaplectic} if $\Phi_{\alpha}$ does not factor to $SL(2,\mathbb R)$.
\end{definition}

The following proposition follows directly from the first two equations in proposition 2.4:
\begin{prop}
If $G$ is not of type $G_2$, then $\alpha$ is metaplectic iff it is long. If $G$ is of type $G_2$, then all roots are metaplectic.
\end{prop}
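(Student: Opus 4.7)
The plan is to characterize when $\Phi_\alpha:\widetilde{SL}(2,\mathbb R)\to G$ factors through $SL(2,\mathbb R)$. Since $\widetilde{SL}(2,\mathbb R)\to SL(2,\mathbb R)$ is a two-fold cover with kernel $\mu_2$, the map $\Phi_\alpha$ factors iff the nontrivial element $-1\in \mu_2\subset \widetilde{SL}(2,\mathbb R)$ is mapped to $1\in G$. So the first step is to get a concrete expression for $-1$ in terms of the Steinberg symbols $h(t)$.

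From the relation $h(t)h(u)=(t,u)h(tu)$ in $\widetilde{SL}(2,\mathbb R)$, setting $t=u=1$ gives $h(1)^2=h(1)$, hence $h(1)=1$; then $h(-1)^2=(-1,-1)h(1)=-1$, so the nontrivial element of the kernel is $h(-1)^2$. Applying $\Phi_\alpha$,
$$\Phi_\alpha(-1)=\Phi_\alpha\bigl(h(-1)^2\bigr)=h_\alpha(-1)^2$$
inside $G$. Thus $\alpha$ is metaplectic if and only if $h_\alpha(-1)^2\neq 1$ in $G$. Now invoke the first two equations of Proposition 2.4, with $f(t,u)$ replaced by the real Hilbert symbol $(t,u)$ (and note that Remark 3 removes the restriction to $\Phi$ not of type $C_n$, by bimultiplicativity of the Hilbert symbol). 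These equations give
$$h_\alpha(-1)^2=\begin{cases}(-1,-1)\,h_\alpha(1)=-1 &\text{if }\alpha\text{ is long,}\\ (-1,-1)^{n_\Phi}h_\alpha(1)=(-1)^{n_\Phi} &\text{if }\alpha\text{ is short.}\end{cases}$$
So long roots are always metaplectic, while a short root is metaplectic precisely when $n_\Phi$ is odd.

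To finish, I case on the Cartan type. For simply-laced types $A_l,D_l,E_l$ every root is long, and all roots are metaplectic, consistent with the statement. For $B_l, C_l, F_4$ one has $n_\Phi=2$, so $(-1)^{n_\Phi}=1$ and short roots are \emph{not} metaplectic; hence in these (non-$G_2$) cases metaplecticity is equivalent to being long. For $G_2$ one has $n_\Phi=3$, so $(-1)^{n_\Phi}=-1$ and the short roots are also metaplectic, giving the second clause of the proposition. There is no real obstacle here; the only point demanding care is the identification of $-1\in\mu_2$ with $h(-1)^2$ and the passage to the Hilbert-symbol version of Proposition 2.4 via Remark 3.
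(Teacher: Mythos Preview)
Your proof is correct and follows essentially the same approach as the paper, which simply states that the result ``follows directly from the first two equations in proposition 2.4.'' You have spelled out the details the paper leaves implicit---namely, identifying the kernel element $-1\in\mu_2$ with $h(-1)^2$ and then applying those two equations together with the values of $n_\Phi$ in each Cartan type---but the underlying argument is the same.
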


\section{Pseudospherical Representations}
For each $\alpha\in\Phi$, let $m_{\alpha}=h_{\alpha}(-1)\in G$ and $Z_{\alpha}=X_{\alpha}-X_{-\alpha}\in\mathfrak g$.
We have $exp(-\pi Z_{\alpha})=m_{\alpha}$ by proposition 2.8 and 2.11.
The following definition is from definition 4.9 and lemma 4.11 of [1]:
\begin{definition}(\textbf{Pseudospherical representations})
An irreducible representation $\sigma$ of $M$ is pseudospherical if the eigenvalues of $\sigma(m_{\alpha})$ belong to $\{\pm i\}$ when $\alpha$ is a metaplectic root, and $\{1\}$ otherwise;
An irreducible representation $\mu$ of $K$ is pseudospherical if the eigenvalues of $d\mu(iZ_{\alpha})$ belong to $\{\pm 1/2\}$ when $\alpha$ is a metaplectic root, and $\{0\}$ otherwise;
A representation of $G$ is pseudospherical if it contains a pseudospherical $K$-type.
\end{definition}

\begin{remark}
When $G=\widetilde{SL}(n,\mathbb R)$, the double cover of $SL(n,\mathbb R)$, the \emph{Spinor representation}  representation of $K=Spin(n)$ is pseudospherical.
\end{remark}

\begin{remark}
If $G$ is simply laced or of type $G_2$, then every irreducible genuine representation of $M$ is pseudospherical. In fact, all roots are metaplectic in this case, and so $m_{\alpha}^2=h_{\alpha}(-1)h_{\alpha}(-1)=-1\in\mu_2\subset Z(G)$. So $\sigma(m_{\alpha})^2=\sigma(-1)=-I$ and hence its eigenvalues are $\pm i$ with multiplicities.
\end{remark}

Also notice that eigenvalues in the pseudospherical conditions for $M$ and $K$ are compatible: $log(1)=2\pi i\mathbb Z$, $log(\pm i)=\pm i \frac{\pi}{2}+2\pi i\mathbb Z$.

\begin{example}
In the case $\widetilde{SL}(2,\mathbb R)$, the representation $\mu=\sigma_{1/2}$ is a pseudospherical representation of $K=\widetilde{SO}(2)$, whose restriction $\sigma=\sigma_{1/2}|_M$ to $M$ is a pseudospherical representation of $M$. In fact,
$$d\mu(Z_{\alpha})=lim_{t\to 0} \frac{\mu(exp(tZ_{\alpha}))-1}{t}$$ But $\mu(exp(tZ_{\alpha}))=\sigma_{1/2}(r_{-t},\epsilon(-t/2))=e^{-it/2}$, the limit is $lim_{t\to 0} \frac{e^{-it/2}-1}{t}=-i/2$. So $d\mu(iZ_{\alpha})=1/2$; $\sigma(m_{\alpha})=\sigma(h_{\alpha}(-1))=\sigma(r_{\pi},\epsilon(\pi/2))=e^{i\pi/2}=i$.
\end{example}

Below is a fundamental fact on pseudospherical representations:
\begin{thm}([1,proposition 5.2])
Let $\sigma$ be a pseudospherical representation of $M$. There is a unique pseudospherical representation $\mu_{\sigma}$ of $K$ such that $\mu_{\sigma}|_M=\sigma$ and this defines a bijection between pseudospherical representation of $M$ and $K$.
\end{thm}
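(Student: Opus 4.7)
The plan is to establish the bijection in three stages: well-definedness of the restriction map, existence of $\mu_\sigma$, and uniqueness. For well-definedness, I start from the identity $m_\alpha = \exp(-\pi Z_\alpha)$, which follows from Proposition~2.11 pulled through the homomorphism $\Phi_\alpha$ of Section~2.4. Since $K$ is compact, $\mu$ is unitarizable and $iZ_\alpha \in i\frakk$ acts as a Hermitian operator, hence diagonalizably with real spectrum. The spectral mapping theorem then converts the pseudospherical eigenvalue hypothesis $d\mu(iZ_\alpha) \in \{\pm 1/2\}$ (metaplectic $\alpha$) or $\{0\}$ directly into $\mu(m_\alpha)$ having eigenvalues $\{\pm i\}$ or $\{1\}$ respectively --- exactly the pseudospherical condition on $M$.

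For existence of $\mu_\sigma$, I would first analyze $M$ using Proposition~2.4: the commutators $(m_\alpha, m_\beta)$ are powers of $(-1,-1)_{\mathbb R}$ determined by Cartan integers, so the subgroup of $M$ generated by metaplectic $m_\alpha$'s (together with $\mu_2$) is a finite Heisenberg group. A Stone--von Neumann-type argument then shows that pseudospherical $M$-representations --- namely the genuine representations that are trivial on the $m_\alpha$ for non-metaplectic $\alpha$ --- form a finite parametrized family of irreducibles, all of a common dimension. To produce $\mu_\sigma$, choose a maximal torus $T_K \subset K$ whose Lie algebra contains (after lifting) the span of the $Z_\alpha$; the pseudospherical constraint forces each $T_K$-weight of a candidate $\mu$ to have metaplectic coordinates $\pm 1/2$ and non-metaplectic coordinates $0$. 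The isomorphism class of $\sigma$ prescribes the highest weight, and $\mu_\sigma$ is constructed as the irreducible $K$-representation with that highest weight --- a spinor-type representation, built in simply-laced types over the Clifford algebra of the metaplectic root space.

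For uniqueness, any pseudospherical $K$-representation $\mu$ has its $T_K$-weights fixed in absolute value by the pseudospherical condition, and its $M$-action then selects the sign pattern; so $\mu \cong \mu'$ whenever $\mu|_M \cong \mu'|_M$. Matching the (equal) counts of pseudospherical $M$- and $K$-irreducibles then yields the bijection.

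The main obstacle is the existence step --- producing a genuine (not merely projective) irreducible $K$-representation with the prescribed spin weights. This is classical in the simply-laced case, but the non-simply-laced types, particularly $G_2$, where Proposition~2.12 declares every root metaplectic, require a more careful analysis. A natural remedy is to reduce root-by-root via the $\Phi_\alpha$ maps and the explicit $\widetilde{SL}(2,\mathbb R)$ bijection visible in Example~3.1 (matching the two pseudospherical characters of $M \cong C_4$ against the two $K$-characters $\sigma_{\pm 1/2}$), and then to patch via the generators $\{w_\alpha(1)\}_{\alpha\in\Phi}$ from Lemma~2.7.
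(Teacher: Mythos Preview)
The paper does not prove this theorem; it is stated with the citation ``[1, Proposition 5.2]'' and used as a black box from then on. There is therefore no proof in the paper to compare against---the argument lives entirely in the cited reference of Adams, Barbasch, Paul, Trapa, and Vogan.

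As for your sketch on its own terms: the well-definedness step is fine, and the three-stage plan is sensible. The existence step, however, has a genuine gap. You propose to ``choose a maximal torus $T_K \subset K$ whose Lie algebra contains \dots\ the span of the $Z_\alpha$,'' and then read off weight coordinates from the pseudospherical eigenvalue condition. But the elements $Z_\alpha = X_\alpha - X_{-\alpha}$ do not commute with one another and in fact span \emph{all} of $\frakk$, not a Cartan subalgebra; so no such torus exists, and it makes no sense to speak of a $T_K$-weight having ``metaplectic coordinates $\pm 1/2$.'' The pseudospherical condition constrains the spectrum of each $d\mu(iZ_\alpha)$ separately, and translating this into highest-weight data for $K$ requires a nontrivial argument (in [1] it is handled essentially type by type, with the spin representations appearing in the simply-laced case as you note). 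Your fallback of patching along the maps $\Phi_\alpha$ is in the right spirit but, as written, is not a proof either.

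A smaller omission: the theorem asserts $\mu_\sigma|_M = \sigma$, i.e.\ that the restriction is \emph{irreducible}, not merely that it contains $\sigma$. Your sketch does not address this; it usually comes out of a dimension comparison once one knows $\dim \mu_\sigma$ and $\dim \sigma = |M/Z(M)|^{1/2}$ (Proposition 3.2), but it is not automatic.
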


Now we want to define an action of $W$ on irreducible representations $(\sigma,V)$ of $M$ that do not factor through $\underline M$ (or equivalently, $\sigma(-1)\neq 1$), we call such representations \emph{genuine representations}. We use $\Pi_g(M)$ to denote the set of isomorphism classes of genuine representations of $M$. We will show that $W$ fixes every isomorphism class of irreducible genuine pseudospherical representation of $M$.

\begin{prop}([1])
Let $Z(M)\supset \mu_2$ be the center of $M$, $\Pi_g(Z(M))$ be the set of genuine characters of $Z(M)$, that is, those characters $\chi$ with the property $\chi(-1)=-1$. For every $\chi\in \Pi_g(Z(M))$, there is a unique representation $\sigma(\chi)$ of $M$ such that $\sigma(\chi)|_{Z(M)}=\chi\cdot I$. The map $\chi\mapsto \sigma(\chi)$ defines a bijection $\Pi_g(Z(M))\to\Pi_g(M)$. The dimension of $\sigma(\chi)$ is $|M/Z(M)|^{1/2}$ and $Ind_{Z(M)}^M(\chi)\cong |M/Z(M)|^{1/2}\sigma(\chi)$.
\end{prop}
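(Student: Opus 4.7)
The plan is to recognize $M$ as a finite Heisenberg-type $2$-group and then invoke the finite version of the Stone--von Neumann theorem. The commutator formulas in Proposition 2.4 (extended to all types by the remark following it) show $[h_\alpha(-1), h_\beta(-1)] \in \mu_2$ for every pair of roots, so since $M$ is generated by the $m_\alpha = h_\alpha(-1)$ we conclude $[M,M] \subseteq \mu_2 \subseteq Z(M)$. Thus $M$ is $2$-step nilpotent and the commutator descends to a bilinear pairing
\[
B \colon \underline M \times \underline M \longrightarrow \mu_2, \qquad (\bar m_1, \bar m_2) \longmapsto [m_1, m_2].
\]
By the very definition of $Z(M)$, the radical of $B$ is exactly $Z(M)/\mu_2$, so $B$ induces a \emph{non-degenerate} symplectic form $\bar B$ on the $\mathbb F_2$-vector space $\overline V := M/Z(M)$. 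In particular $|M/Z(M)|$ is an even power of $2$, and $d := |M/Z(M)|^{1/2}$ is a positive integer.

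Given a genuine $\chi \in \Pi_g(Z(M))$, the composition $\chi \circ B$ is a non-degenerate $\mathbb C^\times$-valued symplectic form on $\overline V$. Choose a Lagrangian $\overline A \subset \overline V$ and let $A \subset M$ be its preimage; then $A$ is a maximal abelian subgroup of $M$ containing $Z(M)$ with $[A:Z(M)] = [M:A] = d$. Extend $\chi$ to any character $\tilde\chi$ of the finite abelian group $A$, and set $\sigma(\chi) := \Ind_A^M \tilde\chi$, a representation of dimension $d$. For $g \in M \setminus A$, maximality of the Lagrangian $\overline A$ furnishes some $\bar a \in \overline A$ with $\chi \circ \bar B(\bar a, \bar g) \neq 1$, so
\[
\tilde\chi^g(a)/\tilde\chi(a) = \chi([g,a]) \neq 1.
\]
Hence $\tilde\chi^g \neq \tilde\chi$ on $A$, Mackey's irreducibility criterion applies, and $\sigma(\chi)$ is irreducible with $\sigma(\chi)|_{Z(M)} = d\,\chi$.

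For uniqueness and the induced-representation decomposition, set $\tau := \Ind_{Z(M)}^M \chi$. Since $[M,M] \subseteq \mu_2 \subseteq Z(M)$, every conjugate $gmg^{-1}$ lies in $m \cdot \mu_2$, so $g^{-1}mg \in Z(M) \Leftrightarrow m \in Z(M)$. A direct character computation then gives $\tr \tau(m) = [M{:}Z(M)]\,\chi(m)$ for $m \in Z(M)$ and $0$ otherwise, whence $\langle \tau, \tau\rangle_M = [M{:}Z(M)] = d^2$. Writing $\tau = \bigoplus_i n_i \rho_i$ with distinct irreducibles $\rho_i$, each $\rho_i$ has central character $\chi$, and Frobenius reciprocity gives $n_i = \dim \Hom_{Z(M)}(\rho_i|_{Z(M)}, \chi) = \dim \rho_i$. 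So $\sum_i (\dim \rho_i)^2 = d^2$, and since $\sigma(\chi)$ of dimension $d$ occurs, all other summands vanish. This proves that $\sigma(\chi)$ is the unique irreducible of $M$ with central character $\chi$ and that $\tau \cong d\,\sigma(\chi)$. The bijection $\chi \leftrightarrow \sigma(\chi)$ follows since every irreducible genuine representation of $M$ acts by a scalar on $Z(M)$, yielding some $\chi \in \Pi_g(Z(M))$.

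The main obstacle is establishing non-degeneracy of $\bar B$ on $M/Z(M)$ from the explicit commutator formulas of Proposition 2.4; once this symplectic structure is in hand, the polarization-plus-Mackey argument is routine.
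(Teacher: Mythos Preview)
Your proof is correct, but it follows a more elaborate route than the paper's. The paper observes directly that for \emph{any} irreducible genuine representation $\sigma$ of $M$, the character $\tr\sigma$ vanishes off $Z(M)$: if $m\notin Z(M)$, pick $h$ with $hmh^{-1}\neq m$; since $\underline M$ is abelian this forces $hmh^{-1}=-m$, whence $\tr\sigma(m)=\sigma(-1)\tr\sigma(m)=-\tr\sigma(m)$. That single observation already gives uniqueness of $\sigma(\chi)$, and existence together with the dimension formula then fall out of the Frobenius computation $\dim\Hom_M(I(\chi),I(\chi))=|M/Z(M)|$ with $I(\chi)=\Ind_{Z(M)}^M\chi$. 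No Lagrangian, polarization, or Mackey criterion is needed.

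Your Stone--von~Neumann approach has the compensating virtue of producing an explicit model $\Ind_A^M\tilde\chi$ for $\sigma(\chi)$, which the paper's argument does not. One remark: what you flag as the ``main obstacle'' is in fact a tautology. The radical of the commutator pairing $B$ on $\underline M$ equals $Z(M)/\mu_2$ \emph{by the definition} of the center, so nondegeneracy of $\bar B$ on $M/Z(M)$ is automatic and requires nothing from Proposition~2.4. What you actually need from outside (and what the abelianness of $\underline M$ already supplies, without the explicit commutator formulas) is the earlier step $[M,M]\subseteq\mu_2$, which makes $B$ well-defined and bilinear in the first place.
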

\begin{proof}
This is proved in [1], but we repeat the argument for completeness.

The key point is that if $\sigma$ is a genuine representation of $M$, then the character $tr\:\sigma$ is supported on $Z(M)$. In fact, suppose $m$ does not belong to $Z(M)$. Choose $h\in M$ so that $hmh^{-1}\neq m$. Since $\underline M$ is abelian, $p(hmh^{-1})=p(h)p(m)p(h)^{-1}=p(m)$, so $hmh^{-1}=-m$. Taking the trace on both sides we have $tr\:\sigma(m)=\chi(-1)tr\:\sigma (m)$. Since $\chi$ is genuine, $\chi(-1)=-1$, so $tr\:\sigma(m)=0$.

Therefor every irreducible genuine representation of $M$ is uniquely determined by its central character. Fix $\chi\in\Pi_g(Z(M))$. Let $I(\chi)=Ind_{Z(M)}^M(\chi)$. This has central character $\chi$, so it is a multiple of the irreducible representation $\sigma(\chi)$ of $M$ with central character $\chi$. Put $I(\chi)=n\sigma(\chi)$. By Frobenius reciprocity,
$$Hom_M(I(\chi),I(\chi))=Hom_{Z(M)}(I(\chi)|_{Z(M)},\chi)$$ which has dimension $|M/Z(M)|$. On the other hand, by Schur's lemma,
$$Hom_M(I(\chi),I(\chi))=Hom_M(n\sigma(\chi),n\sigma(\chi))$$ which has dimension $n^2$. Therefore $n=|M/Z(M)|^{1/2}$ and the dimension of $\sigma(\chi)$ is $|M/Z(M)|^{1/2}$.
\end{proof}

Since $N_K(A)$ acts on $M$ by conjugation, it also acts on its center $Z(M)$, which factors down to $W=N_K(A)/M$. Thus we have an action of $W$ on $\Pi_g(Z(M))$. By the proposition above, this gives rise to an action of $W$ on $\Pi_g(M)$. More precisely, pick $\hat w\in N_K(A)$ which is a representative of $w\in W$, then $\hat w\sigma(m)=\sigma(\hat w^{-1}m \hat w)$ is a representation of $M$. Up to isomorphism, it is independent of the choice of a representative of $w$ because different representatives $\hat w$ give the same central character, hence isomorphic representations. Therefore one can denote this representation by $w\sigma$, as an isomorphism class in $\Pi_g(M)$.

\begin{prop}
The action of the Weyl group $W$ on the isomorphism classes of irreducible genuine representations of $M$ fixes each isomorphism class of pseudospherical representations.
\end{prop}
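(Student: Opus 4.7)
The strategy is to reduce the statement to \textbf{Theorem 3.4}. Given an irreducible genuine pseudospherical representation $(\sigma, V)$ of $M$, I would apply that theorem to produce the unique pseudospherical $K$-representation $(\mu_\sigma, V)$ satisfying $\mu_\sigma|_M = \sigma$. Since $W = N_K(A)/M$ by definition and $N_K(A) \subset K$, every $w \in W$ admits a lift $\hat w \in N_K(A) \subset K$ (concretely, a product of the $w_\alpha(1)$ supplied by Lemma 2.7). The goal is then to exhibit $\mu_\sigma(\hat w)$ as the desired $M$-equivariant isomorphism between $w\sigma$ and $\sigma$.

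The verification is direct. For every $m \in M$, the element $\hat w^{-1} m \hat w$ again lies in $M$, so one computes
\[
\mu_\sigma(\hat w)\,(w\sigma)(m) = \mu_\sigma(\hat w)\,\sigma(\hat w^{-1} m \hat w) = \mu_\sigma(\hat w)\,\mu_\sigma(\hat w^{-1} m \hat w) = \mu_\sigma(m)\,\mu_\sigma(\hat w) = \sigma(m)\,\mu_\sigma(\hat w),
\]
using nothing beyond the homomorphism property of $\mu_\sigma$ and the identity $\mu_\sigma|_M = \sigma$. Hence $\mu_\sigma(\hat w)$ is an invertible $M$-intertwiner from $w\sigma$ to $\sigma$, and therefore $w\sigma \cong \sigma$ in $\Pi_g(M)$. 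The ambiguity in the choice of lift $\hat w$ is harmless, since two lifts differ by an element of $M$ which only conjugates the intertwiner by an element of $\sigma(M)$.

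In this approach there is essentially no obstacle: Theorem 3.4 packages all the real content, after which one only has to observe that any $K$-extension automatically supplies intertwiners for the $W$-conjugation action on $M$. A direct alternative would be to combine Proposition 3.5 with Proposition 2.5 and show that the central character of a pseudospherical $\sigma$ is $W$-invariant; since the $W$-action sends $m_\beta = h_\beta(-1)$ to $\pm m_{w_\alpha \beta}$ with signs governed by the constants $c(\alpha,\beta) = \pm 1$, this would require careful tracking of signs together with the fact that $W$ preserves root lengths (hence the metaplectic/non-metaplectic dichotomy). The route through Theorem 3.4 bypasses that bookkeeping entirely, which is why I would favor it.
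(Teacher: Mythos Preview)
Your proposal is correct and follows essentially the same approach as the paper: both arguments invoke the unique pseudospherical extension $\mu_\sigma$ of $\sigma$ to $K$ and use $\mu_\sigma(\hat w^{\pm 1})$ as the required intertwiner. The only cosmetic difference is that the paper first checks the map is a $K$-isomorphism $\mu_\sigma \cong \hat w\mu_\sigma$ and then restricts to $M$, whereas you verify the $M$-intertwining property directly; also note that the theorem you cite is numbered 3.1 in the paper, not 3.4.
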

\begin{proof}
Assume $(\sigma,V)$ is a genuine representation of $M$. $\forall w\in W$, choose a representative $\hat w$ of $w$ in $N_K(A)$. By theorem 3.1, there is a unique pseudospherical representation $(\mu_{\sigma},V)$ of $K$ such that $\mu_{\sigma}|_M=\sigma$. Define $\phi:V\to V$, $v\mapsto \mu_{\sigma}(\hat w^{-1})v$, we have
$$\phi(\mu_{\sigma}(k)v)=\mu_{\sigma}(\hat w^{-1})\mu_{\sigma}(k)v=\mu_{\sigma}(\hat w^{-1}k\hat w)\mu_{\sigma}(\hat w^{-1})v=(\hat w\mu_{\sigma})(k)\phi(v)$$ for any $k\in K$. Thus $\phi$ is a $K$-isomorphism, restricting it to $M$, we get $\sigma\cong (\hat w\mu_{\sigma})|_M=\hat w\sigma$.
\end{proof}

\section{Principal series representations and Intertwining operators}
In this section, let $G$ be the double cover of a split real group. We define the principal series representation of $G$, and the intertwining operator. Most of the results are well-known in the linear group case, see [4]. The discussion for covering groups is almost identical to the linear case. The highlight is, the intertwining maps can be defined in a canonical way using the theory of pseudospherical representations.

Let $\chi$ be a character of $A$, and $\delta$ be the modular character of $A$ such that
$$\int_{N}f(a^{-1}na)dn=\delta(a)\int_{N}f(n)dn$$ for any $a\in A$ and any compact supported function $f$ on $N$. Here we fix a Haar measure on $N$, which is topologically isomorphic to $\mathbb R^{|\Phi^+|}$. $\delta$ depends on $N$, we will write $\delta_N$ instead of $\delta$ when needed. $\delta$ is equal to the product of the roots in $\Phi^+$, considered as multiplicative characters of $A$. Let $(\sigma, V)$ be a pseudospherical representation of $M$.
\begin{definition}
Define $I(P,\sigma, \chi)$, the space of principal series, to be the space of smooth functions $f:G\to V$ such that
$$f(namx)=\delta(a)^{1/2}\chi(a)\sigma(m)f(x)$$  $\forall n\in N$, $\forall a\in A$, $\forall m\in M$, $\forall x\in G$. $G$ acts on $I(P,\sigma, \chi)$ by right translation: $\rho(g)f(x)=f(xg)$. This defines a representation of $G$ called the \emph{principal series representation, or induced representation} of $G$. For simplicity, we denote this representation by $I(\sigma,\chi)$, $I(\chi)$ when there is no confusion.
\end{definition}

Assume $\chi$ is a character of $A$, $\forall w\in N_K(A)$, $w\chi(a)=\chi(w^{-1}aw)$ is another character of $A$, this action factors down to $W$. Note that $w_1(w_2\chi)=(w_1w_2)\chi$. In other words, we have an action of the Weyl group $W$ on $\Pi(A)=$ the set of characters of $A$.

By theorem 3.1, there is an irreducible representation $(\mu_{\sigma},V)$ of $K$ such that $\mu_{\sigma}|_M=\sigma$. For any $f\in I(P,\sigma,\chi)$ and any $w\in W$, pick a representative $\hat w\in N_K(A)$ of $w$, define a function $M(w,\sigma,\chi)f$ in the following way:
$$M(w,\sigma,\chi)f(x)=\mu_{\sigma}(\hat w)\int_{N\cap \hat w N\hat w^{-1}\backslash N}f(\hat w^{-1}nx)dn$$
Note that $n\to f(\hat w^{-1}nx)$ is left $N\cap \hat w N\hat w^{-1}$-invariant, so the integral makes sense. Also it is well-defined, i.e, independent of the choice of a representative of $w$ in $N_K(A)$ due to the normalizing factor $\mu_{\sigma}$. For simplicity, we write $w$ in place of $\hat w$ when there is no other confusion. Let us remark that $N_w=N\cap wNw^{-1}\backslash N$ correspond to those positive roots who are sent to negative by $w^{-1}$, and it has one to one correspondence with $B\backslash Bw^{-1}N$.

Let $S(w)$ be the set of $\chi$ so that the above integral is absolutely convergent for any $x\in G$, $f\in I(P,\sigma,\chi)$.
We are going to show that $M(w,\sigma,\chi)$ maps $I(P,\sigma,\chi)$ into $I(P,\sigma,w\chi)$ for $\chi\in S(w)$. This map is called \emph{intertwining map}. For simplicity, we denote this map by $M(w,\chi)$ or $M(w)$ sometimes.

\begin{lemma}
Let $w$ be an element in $W$, $\delta_w$ be a character of $A$ such that $$\int_{N_w}f(a^{-1}na)dn=\delta_w(a)\int_{N_w}f(n)dn$$  for any $a\in A$ and any integrable function $f$ on $N$. Then $(w\delta)^{1/2}\delta_w=\delta^{1/2}$.
\end{lemma}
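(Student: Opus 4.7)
The plan is to compute all three characters as explicit products of positive roots and then match them combinatorially.

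First I would fix the identification between the quotient space $N \cap wNw^{-1}\backslash N$ and a genuine subgroup of $N$. Writing $\Phi^+_w = \{\alpha \in \Phi^+ : w^{-1}\alpha \in -\Phi^+\}$, the subgroup $N_w' \subset N$ generated by the $x_\alpha(t)$ with $\alpha \in \Phi^+_w$ gives a direct-product decomposition $N = (N \cap wNw^{-1}) \cdot N_w'$. Under this identification the Haar measure on $N \cap wNw^{-1}\backslash N$ corresponds to a Haar measure on $N_w'$, so $\delta_w$ is the modular character of the $A$-action on $N_w'$.

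Next I would invoke the standard fact that for any $A$-stable unipotent subgroup of $N$ with root decomposition indexed by $S \subset \Phi^+$, the modular character of $A$-conjugation equals $\prod_{\alpha \in S}\alpha$. Applied to $N$ and to $N_w'$ this gives
\[
\delta = \prod_{\alpha \in \Phi^+}\alpha, \qquad \delta_w = \prod_{\alpha \in \Phi^+_w}\alpha.
\]
For the twisted character I use $w\delta(a) = \delta(w^{-1}aw) = \prod_{\alpha \in \Phi^+}(w\alpha)(a)$, and re-index $\beta = w\alpha$ to obtain $w\delta = \prod_{\beta \in w\Phi^+}\beta$.

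Finally I would compare the two partitions
\[
\Phi^+ \;=\; (\Phi^+ \cap w\Phi^+) \,\sqcup\, \Phi^+_w, \qquad w\Phi^+ \;=\; (\Phi^+ \cap w\Phi^+) \,\sqcup\, (-\Phi^+_w),
\]
where the second equality holds because the roots of $w\Phi^+$ that are \emph{not} positive are precisely the negatives of the roots in $\Phi^+_w$. Taking products and recalling that $-\gamma$ corresponds to the character $\gamma^{-1}$ on $A$ yields
\[
\delta \;=\; \Bigl(\prod_{\beta \in \Phi^+\cap w\Phi^+}\!\!\beta\Bigr)\cdot \delta_w, \qquad w\delta \;=\; \Bigl(\prod_{\beta \in \Phi^+\cap w\Phi^+}\!\!\beta\Bigr)\cdot \delta_w^{-1},
\]
hence $\delta = w\delta \cdot \delta_w^2$, and taking the positive square root gives $\delta^{1/2} = (w\delta)^{1/2}\delta_w$.

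The only place that requires care is the first step: verifying that the measure on the quotient $N \cap wNw^{-1}\backslash N$ pulls back cleanly to a Haar measure on $N_w'$ so that $\delta_w$ really is the modular character of a single root-subspace product. Once that identification is in place the remainder is bookkeeping with the sets $\Phi^+_w$, $\Phi^+\cap w\Phi^+$, and $w\Phi^+$, and no analytic obstacle appears.
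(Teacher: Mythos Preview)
Your argument is correct. The root-combinatorial computation goes through exactly as you wrote it: the two partitions $\Phi^{+}=(\Phi^{+}\cap w\Phi^{+})\sqcup \Phi^{+}_{w}$ and $w\Phi^{+}=(\Phi^{+}\cap w\Phi^{+})\sqcup(-\Phi^{+}_{w})$ immediately give $\delta=(w\delta)\,\delta_{w}^{2}$, and the identification of the quotient measure on $N\cap wNw^{-1}\backslash N$ with Haar measure on the root-subgroup $N'_{w}$ is standard for split groups (it is the product of the one-dimensional root subgroups indexed by $\Phi^{+}_{w}$, which the paper itself remarks just before Lemma~4.1).

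The paper takes a slightly different route and, in fact, only treats the case of a \emph{simple} reflection. It works structurally rather than combinatorially: for $w=w_\alpha$ simple it forms the rank-one parabolic $Q=P\cup Pw^{-1}P$ with unipotent radical $U=N\cap wNw^{-1}$, writes $\delta_{N}=\delta_{U}\,\delta_{N/U}$ and $w\delta_{N}=\delta_{wNw^{-1}}=\delta_{U}\,\delta_{wNw^{-1}/U}$, and then uses the rank-one fact $\delta_{wNw^{-1}/U}=\delta_{N/U}^{-1}$ (since $N/U$ and $wNw^{-1}/U$ are the opposite one-dimensional root groups for $\pm\alpha$). Dividing gives $\delta_{N}/(w\delta_{N})=\delta_{N/U}^{2}=\delta_{w}^{2}$, which is your identity in this special case. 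The general $w$ is not argued there; presumably it is meant to follow from the simple case together with the factorization in Proposition~4.3, which is all the paper actually needs. Your approach has the advantage of proving the lemma for arbitrary $w$ in one stroke, at the cost of invoking the root-space description of the Haar measure explicitly; the paper's approach hides that in the parabolic decomposition but only delivers the simple-reflection case.
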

\begin{proof}
For a simple reflection $w$, take $Q=P\cup Pw^{-1}P$ and $L,U$ be its Levi factor and unipotent radical. Note that $U=N\cap wNw^{-1}$. We have $\delta_{N}=\delta_{U}\delta_{N/U}$ and $\delta_{wNw^{-1}}=\delta_{U}\delta_{wNw^{-1}/U}$. But $\delta_{wNw^{-1}}=w\delta_N$ and $\delta_{wNw^{-1}/U}=\delta_{N/U}^{-1}$, the conclusion now follows from simple algebraic manipulations.
\end{proof}

\begin{prop}Assume $\chi\in S(w)$, then $M(w,\sigma,\chi)$ maps $I(P,\sigma,\chi)$ into $I(P,\sigma,w\chi)$.
\end{prop}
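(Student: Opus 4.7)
The goal is to show that $g := M(w,\sigma,\chi)f$ lies in $I(P,\sigma,w\chi)$, i.e., that $g(namx) = \delta(a)^{1/2}(w\chi)(a)\sigma(m)g(x)$ for all $n\in N$, $a\in A$, $m\in M$, $x\in G$. Since $P=NAM$, the plan is to verify the three transformation laws (for $N$, $A$, and $M$) separately. Before doing that, I would record that the integrand really is well-defined on the quotient $N_w = (N\cap \hat w N \hat w^{-1})\backslash N$: for $n_0\in N\cap \hat w N \hat w^{-1}$ one has $\hat w^{-1}n_0\hat w\in N$, so the left-$N$-invariance of $f$ gives $f(\hat w^{-1}n_0 n x) = f((\hat w^{-1}n_0\hat w)\,\hat w^{-1}n x) = f(\hat w^{-1}n x)$.

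The $N$-equivariance is immediate: substituting $n\mapsto nn_0$ in the integral is a right translation that preserves the Haar measure on $N$ and descends to the left quotient $N_w$. For the $A$-equivariance, I would rewrite
$$\hat w^{-1} n a = (\hat w^{-1} a \hat w)\,\hat w^{-1}(a^{-1} n a),$$
pull out the scalar $(w\delta)(a)^{1/2}(w\chi)(a)$ by applying the left-$A$ transformation law of $f$ at $\hat w^{-1} a \hat w \in A$, and then substitute $n\mapsto a n a^{-1}$ in the remaining integral. The substitution descends to $N_w$ because $A$ normalizes both $N$ and $\hat w N \hat w^{-1}$, and Lemma 4.2 identifies its Jacobian on the quotient as $\delta_w(a)$. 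The identity $(w\delta)^{1/2}\delta_w = \delta^{1/2}$ of the lemma then collapses the two scalars into exactly $\delta(a)^{1/2}(w\chi)(a)$, as required.

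For the $M$-equivariance, I would apply the analogous rewrite
$$\hat w^{-1} n m = (\hat w^{-1} m \hat w)\,\hat w^{-1}(m^{-1} n m),$$
pull out $\sigma(\hat w^{-1}m\hat w)$ using the left-$M$ transformation law of $f$, and substitute $n\mapsto mnm^{-1}$. The substitution descends to $N_w$ (since $\hat w^{-1}M\hat w=M$, so $M$ normalizes $\hat w N\hat w^{-1}$) and preserves $dn$ because $\mathrm{Ad}(m)$ acts on each root space of $\mathfrak n$ by $\pm 1$. What remains is to verify
$$\mu_\sigma(\hat w)\,\sigma(\hat w^{-1}m\hat w) = \sigma(m)\,\mu_\sigma(\hat w),$$
which follows at once from $\mu_\sigma$ being a representation of $K$ with $\mu_\sigma|_M=\sigma$: both sides equal $\mu_\sigma(m\hat w)$.

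The main obstacle is conceptual rather than computational: the prefactor $\mu_\sigma(\hat w)$ in the definition of $M(w,\sigma,\chi)$ has been placed there precisely so that the $M$-equivariance goes through with no residual twist depending on the choice of $\hat w$. The $A$-step is where Lemma 4.2 is used in an essential way, but the $M$-step is where the pseudospherical setup (the canonical $K$-extension $\mu_\sigma$ from Theorem 3.1) earns its role in the definition of the intertwining operator, and so this is the step whose bookkeeping I would write out most carefully.
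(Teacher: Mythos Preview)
Your argument is correct and follows the paper's proof essentially step for step: both verify the $N$-, $A$-, and $M$-transformation laws separately via the same conjugation rewrites, both invoke the lemma $(w\delta)^{1/2}\delta_w=\delta^{1/2}$ for the $A$-step, and both close the $M$-step using $\mu_\sigma(\hat w)\sigma(\hat w^{-1}m\hat w)=\sigma(m)\mu_\sigma(\hat w)$. You are somewhat more explicit than the paper about why the substitutions descend to $N_w$ and why conjugation by $m$ preserves $dn$, but this is additional care rather than a different approach. (Minor note: the lemma you cite is numbered 4.1 in the paper, not 4.2.)
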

\begin{proof}
$M(w,\sigma,\chi)f(nx)=M(w,\sigma,\chi)f(x)$ is obvious.
\begin{align*}
M(w,\sigma,\chi)f(ax)
&=\mu_{\sigma}(w)\int_{N_w} f(w^{-1}nax)dn\\
&=\mu_{\sigma}(w)\int_{N_w} f((w^{-1}aw)w^{-1}(a^{-1}na)x)dn\\
&=\mu_{\sigma}(w)w\delta(a)^{1/2}w\chi(a)\int_{N_w} f(w^{-1}(a^{-1}na)x)dn\\
&=\mu_{\sigma}(w)w\delta(a)^{1/2}w\chi(a)\delta_w(a)\int_{N_w} f(w^{-1}nx)dn\\
&=\delta(a)^{1/2}w\chi(a)M(w,\sigma,\chi)f(x)\\
\end{align*}

\begin{align*}
M(w,\sigma,\chi)f(mx)
&=\mu_{\sigma}(w)\int_{N_w} f(w^{-1}nmx)dn\\
&=\mu_{\sigma}(w)\int_{N_w} f((w^{-1}mw)w^{-1}(m^{-1}nm)x)dn\\
&=\mu_{\sigma}(w)\sigma(w^{-1}mw)\int_{N_w} f(w^{-1}(m^{-1}nm)x)dn\\
&=\sigma(m)\mu_{\sigma}(w)\int_{N_w} f(w^{-1}nx)dn\\
&=\sigma(m)M(w,\sigma,\chi)f(x)\\
\end{align*}

\end{proof}

Assume that the Haar measures on $N_w$'s are normalized so that when $l(w_1w_2)=l(w_1)+l(w_2)$,
$$\int_{N_{w_1w_2}}f(n)dn=\int_{N_{w_1}\times N_{w_2}}f(w_1n_2w_1^{-1}n_1)dn_1dn_2$$ for any integrable function $f$ on $N_{w_1w_2}$. Under this assumption, the following proposition holds:

\begin{prop}
Assume $w_1,w_2\in W$ such that $l(w_1w_2)=l(w_1)+l(w_2)$, then
$$S(w_1w_2)=S(w_2)\cap w_2^{-1}S(w_1)$$ and for $\chi\in S(w)$ which is regular(only fixed by the trivial element in $W$),
$$M(w_1,\sigma,w_2\chi)\circ M(w_2,\sigma,\chi)=M(w_1w_2,\sigma,\chi)$$
\end{prop}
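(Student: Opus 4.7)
The plan is to derive both statements from the integral definition of $M(w, \sigma, \chi)$ together with the Haar measure factorization assumed immediately before the proposition, via Fubini and Tonelli. First I pick representatives $\hat w_1, \hat w_2 \in N_K(A)$ for $w_1, w_2$; since each $M(w, \sigma, \chi)$ is independent of the choice of representative (thanks to the normalizing factor $\mu_\sigma(\hat w)$), I am free to use $\hat w_1 \hat w_2 \in N_K(A)$ as the representative for $w_1 w_2$. This also makes the scalar normalizations line up on the nose: $\mu_\sigma(\hat w_1) \mu_\sigma(\hat w_2) = \mu_\sigma(\hat w_1 \hat w_2)$, since all three are values of the same $K$-representation $\mu_\sigma$ on the same element.

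For the composition formula, I expand both sides. The key computation is the identity $(\hat w_1 \hat w_2)^{-1} \hat w_1 n_2 \hat w_1^{-1} n_1 = \hat w_2^{-1} n_2 \hat w_1^{-1} n_1$ combined with the assumed measure factorization
$$\int_{N_{w_1 w_2}} F(n)\, dn = \int_{N_{w_1}\times N_{w_2}} F(\hat w_1 n_2 \hat w_1^{-1} n_1)\, dn_1\, dn_2.$$
Applying these with $F(n) = f((\hat w_1 \hat w_2)^{-1} n x)$ gives
\begin{align*}
M(w_1, \sigma, w_2\chi)(M(w_2, \sigma, \chi) f)(x)
&= \mu_\sigma(\hat w_1 \hat w_2) \int_{N_{w_1}} \int_{N_{w_2}} f(\hat w_2^{-1} n_2 \hat w_1^{-1} n_1 x)\, dn_2\, dn_1 \\
&= \mu_\sigma(\hat w_1 \hat w_2) \int_{N_{w_1 w_2}} f((\hat w_1 \hat w_2)^{-1} n x)\, dn \\
&= M(w_1 w_2, \sigma, \chi) f(x).
\end{align*}
Fubini's theorem, used to recognize the iterated integral as a double integral, is justified by absolute convergence once $\chi$ lies in the common domain.

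For the equality $S(w_1 w_2) = S(w_2) \cap w_2^{-1} S(w_1)$, I rerun the same calculation with $|f|$ in place of $f$ and invoke Tonelli: absolute convergence of the integral over $N_{w_1 w_2}$ is equivalent to absolute convergence of the iterated integral. Convergence of the inner integral over $N_{w_2}$ (uniformly in $n_1$) is exactly the statement $\chi \in S(w_2)$; convergence of the remaining outer integral over $N_{w_1}$, applied to the resulting function in $I(P, \sigma, w_2\chi)$, is exactly $w_2 \chi \in S(w_1)$, i.e.\ $\chi \in w_2^{-1} S(w_1)$. The one technical point that is not purely formal is the passage to $|f|$: one must check that $|f|$ for $f \in I(P, \sigma, \chi)$ is majorized by a genuine element of an induced representation attached to a real character, so that the condition for absolute convergence really is captured by $S(w_2)$ and $S(w_1)$. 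This is the standard reduction for real principal series and is the main (minor) obstacle in the argument; once it is granted, everything else is a mechanical unwinding of the definitions.
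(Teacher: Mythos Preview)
Your argument is correct and follows the same core computation as the paper: unwind the definition of the composite, insert $\hat w_1 \hat w_1^{-1}$, and invoke the assumed Haar measure factorization on $N_{w_1w_2}$. The one difference is that the paper uses the regularity hypothesis and Frobenius reciprocity to conclude that $\Hom_G(I(\chi),I(w\chi))$ is one-dimensional, so it suffices to check the identity at $x=1$; you instead verify the equality at every $x$, which is no harder and does not actually require $\chi$ to be regular. You also sketch the proof of $S(w_1w_2)=S(w_2)\cap w_2^{-1}S(w_1)$ via Tonelli, which the paper's proof omits entirely; your handling of the $|f|$ majorization is the standard reduction and is fine.
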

\begin{proof}
Since $\chi$ is regular, by Frobenius reciprocity, the dimension of $Hom_G(I(\chi),I(w\chi))$ is one for any $w\in W$. So it suffices to show that
$(M(w_1)\circ M(w_2)f)(1)=M(w_1w_2)f(1)$.
\begin{align*}
(M(w_1)\circ M(w_2)f)(1)
&=\mu_{\sigma}(w_1)\int_{N_{w_1}}(M(w_2)f)(w_1^{-1}n_1)dn_1\\
&=\mu_{\sigma}(w_1)\mu_{\sigma}(w_2)\int_{N_{w_1}}dn_1\int_{N_{w_2}}f(w_2^{-1}n_2w_1^{-1}n_1)dn_2\\
&=\mu_{\sigma}(w_1w_2)\int_{N_{w_1}}dn_1\int_{N_{w_2}}f(w_2^{-1}w_1^{-1}\cdot w_1n_2w_1^{-1}n_1)dn_2\\
\end{align*}
By the assumption on the Haar measures, the last expression is equal to
$$\mu_{\sigma}(w_1w_2)\int_{N_{w_1w_2}}f(w_2^{-1}w_1^{-1}n)dn=M(w_1w_2)f(1)$$
\end{proof}

\section{Representations of $\widetilde{SL}(2,\mathbb R)$}
We carry out the detailed study of principal series and intertwining maps in the $SL_2$ case first, which is the fundamental building block of the general case. The results in 5.1 are well-known but we list them here for the purpose of making a comparison with the non-linear case.

\subsection{Linear case}
The results and calculations in this section are well known, we present them for the purpose of completeness and exposition.

Let $\underline G=SL(2,\mathbb R)$, $\underline P$ be the standard parabolic subgroup with Langlands decomposition $\underline P=NA\underline M$. $\underline M$ only two characters, let $\sigma$ be any of them. For any complex number $s$, define a character $\chi$ of $A$:
$\chi(a)=a^s$ where $a=diag(a,a^{-1})$. The modular character $\delta(a)$ of $A$ is $a^2$. So the space $I(\underline P,\sigma,\chi)$ of principal series in this case is the collection of functions $f$ such that
$$f(namx)=a^{s+1}\sigma(m)f(x)$$ For simplicity, we denote it by $I(\sigma,s)$.
Let $\underline K=SO(2)$. For any $n\in\mathbb Z$, $\tau_n(r_{\phi})=e^{in\phi}$ is a character of $\underline K$. Define $f_s^n$ to be such that
$$f_s^n(nak)=a^{s+1}\tau_n(k)$$
Then $f_s^n\in I(\tau_n|_{\underline M},s)$. When $n$ is even, $\tau_n|_{\underline M}$ is trivial, denoted by $\sigma_0$; When $n$ is odd, $\tau_n|_{\underline M}$ is nontrivial, denoted by $\sigma_1$. We say $f_s^n$ is of $\underline K$-type $n$.

The Weyl group $W$ is of order two, let $w$ be its nontrivial element. Now we define the intertwining map $M(\sigma,s):I(\sigma,s)\to I(\sigma,-s)$:
When $\sigma=\sigma_0$,
$$M(\sigma,s)f(x)=\int_{N}f(wnx)dn$$
When $\sigma=\sigma_1$,
$$M(\sigma,s)f(x)=\tau_1(w)^{-1}\int_{N}f(wnx)dn$$ for $f\in I(\sigma,s)$. It does not depend on the choice of a representative element of $w$ in $N_K(A)$. $M(\sigma,s)f_s^n=c_n(s)f_{-s}^n$ for some constant $c_n(s)$. We have $c_n(s)=(M(\sigma,s)f_s^n)(1)$. The following proposition is well-known, we will give a proof of a more general proposition in the next subsection.

\begin{prop}
$$c_n(s)=\sqrt{\pi}\frac{\Gamma(\frac{s}{2})\Gamma(\frac{s+1}{2})}{\Gamma(\frac{s+n+1}{2})\Gamma(\frac{s-n+1}{2})}$$
\end{prop}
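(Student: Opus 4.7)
My plan is to reduce this identity to a classical Beta-type integral via the Iwasawa decomposition of $\underline G = SL(2,\mathbb{R})$. Since $f_{-s}^n(1)=1$, one has $c_n(s)=M(\sigma,s)f_s^n(1)$, which unwinds (up to the normalizing factor $\tau_1(w)^{-1}$ when $\sigma=\sigma_1$) to a single integral over $N$. Parametrizing $N$ by $t\mapsto\underline x(t)$ with $dn=dt$, the entire problem reduces to evaluating
\[
\int_{-\infty}^{\infty} f_s^n\bigl(w\,\underline x(t)\bigr)\,dt.
\]

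The first step is to perform the Iwasawa decomposition $w\,\underline x(t)=n(t)\,\underline h(a(t))\,r_{\phi(t)}$ by a direct $2\times 2$ matrix multiplication with $w=\underline w(1)$ as a representative. Reading off the bottom row of the product gives $a(t)=(1+t^2)^{-1/2}$, together with explicit formulas $\cos\phi(t)=-t(1+t^2)^{-1/2}$ and $\sin\phi(t)=-(1+t^2)^{-1/2}$. Plugging into the defining transformation rule of $f_s^n$, the integrand becomes
\[
f_s^n\bigl(w\,\underline x(t)\bigr)=(1+t^2)^{-(s+1)/2}\,e^{in\phi(t)}.
\]

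Next I would make the change of variables $t=\tan u$, so that $1+t^2=\sec^2 u$, $dt=\sec^2 u\,du$, and $u$ ranges over $(-\pi/2,\pi/2)$. A direct check using the identity $\tan u+i=ie^{-iu}/\cos u$ shows that, up to a unit phase factor depending only on $n$, the integrand collapses to $\cos^{s-1}(u)\,e^{\pm inu}$. By the parity $u\mapsto -u$, the imaginary part vanishes, leaving the classical integral
\[
\int_{-\pi/2}^{\pi/2}\cos^{s-1}(u)\cos(nu)\,du=\frac{\pi\,\Gamma(s)}{2^{s-1}\,\Gamma\!\left(\tfrac{s+n+1}{2}\right)\Gamma\!\left(\tfrac{s-n+1}{2}\right)},
\]
valid for $\mathrm{Re}(s)>0$ and then extended by analytic continuation. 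Applying the Legendre duplication formula $\Gamma(s)=\pi^{-1/2}\,2^{s-1}\,\Gamma(s/2)\,\Gamma((s+1)/2)$ converts the numerator to $\sqrt{\pi}\,\Gamma(s/2)\Gamma((s+1)/2)$, producing the claimed expression for $c_n(s)$.

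The computation itself is essentially mechanical once the Iwasawa decomposition is in hand; the main obstacle is the careful bookkeeping of phase factors. The unit phase of the form $i^n$ or $(-1)^n$ picked up in the trigonometric change of variables must be exactly cancelled by the normalization $\tau_1(w)^{-1}$ (when $\sigma=\sigma_1$) together with the choice of representative $\hat w\in N_K(A)$. Verifying that these cancellations produce the same scalar independently of $\hat w$, and do so uniformly across both parities of $n$ and both choices of $\sigma\in\{\sigma_0,\sigma_1\}$, is the only delicate point; once that is checked, the gamma-function identity delivers the proposition immediately.
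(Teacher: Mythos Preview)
Your evaluation step---substituting $t=\tan u$ and reducing to the classical Euler integral $\int_{-\pi/2}^{\pi/2}\cos^{s-1}(u)\cos(nu)\,du$---is correct and considerably more elementary than what the paper does. The paper defers Proposition~5.1 to the more general Proposition~5.5, and there it rewrites the integrand as $(1+it)^{-2u}(1-it)^{-2v}$, identifies the integral as the limit $y\to 0$ of a Fourier transform, evaluates that transform in terms of Whittaker functions $W_{\rho,\sigma}$ via integral tables, and only then applies the Legendre duplication formula. Your route bypasses all of that machinery.

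Where your argument does not close is precisely the point you flag as delicate. With the representative $w=\underline w(1)$ and the integral $\int_N f_s^n(w\,\underline x(t))\,dt$, the Iwasawa angle satisfies $e^{i\phi(t)}=(-t-i)/\sqrt{1+t^2}=-ie^{-iu}$ after your substitution, so the integral equals $(-i)^n$ times the desired gamma expression. The normalization in Section~5.1 is by $\tau_0(w)^{-1}=1$ or $\tau_1(w)^{-1}$---the distinguished $K$-type attached to $\sigma$, \emph{not} $\tau_n(w)^{-1}$---so it cannot absorb $(-i)^n$ for general $n$; for instance at $n=2$, $s=2$ one computes the integral directly to be $-2/3$, while the stated formula gives $+2/3$. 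Changing the representative $\hat w$ cannot help either, since $M(\sigma,s)$ is designed to be independent of that choice. The paper sidesteps this entirely by computing $c_{n/2}(s)$ as the $\bar N$-integral $A(\sigma,s)f_s^{n/2}(1)=\int_{\bar N}f_s^{n/2}(\bar n)\,d\bar n$; then the Iwasawa angle is simply $\phi(t)=\arctan t\in(-\pi/2,\pi/2)$, so $e^{i\phi}=e^{iu}$ exactly and no spurious phase appears. If you start from that $\bar N$-formulation, your trigonometric substitution goes through with no bookkeeping at all and yields a shorter proof than the paper's.
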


\subsection{Nonlinear case}
Let $G=\widetilde{SL}(2,\mathbb R)$, $P=NAM$ be its standard parabolic subgroup which is the double cover of $\underline P$. Let $K$ be the double cover of $\underline K=SO(2)$. We are going to study the principal series of $G$ and calculate the intertwining map using the Kubota cocycle.
Let $\sigma$ be a character of $M$, define a character $\chi$ of $A$:
$$\chi(a)=a^s$$ where $s\in \mathbb C$, $a=(diag(a,a^{-1}),1)\in A$. Since $\delta_N(a)=a^2$ and $\delta_{\bar N}(a)=a^{-2}$, $I(P,\sigma,\chi)$, which we denote by $I(\sigma,s)$ for simplicity, consists of functions $f$ so that
$$f(namx)=a^{s+1}\sigma(m)f(x)$$ $I(\bar P,\sigma,\chi)$, which we denote by $\bar I(\sigma,s)$ for simplicity, consists of functions $f$ so that
$$f(\bar namx)=a^{s-1}\sigma(m)f(x)$$

For any $n\in\mathbb Z$, define $f_s^{n/2}$ to be such that
$$f_s^{n/2}(nak)=a^{s+1}\sigma_{n/2}(k)$$ where $\sigma_{n/2}$ is a character of $K$ defined in corollary 2.9.
Then $f_s^{n/2}\in I(\sigma_{n/2}|_M,s)$. We say $f_s^{n/2}$ is of $K$-type $n/2$. Similarly, define $\bar f_s^{n/2}$ to be such that
$$\bar f_s^{n/2}(\bar nak)=a^{s-1}\sigma_{n/2}(k)$$
Then $\bar f_s^{n/2}\in \bar I(\sigma_{n/2}|_M,s)$.

There are four different characters $\sigma$ of $M$: $\sigma_{0}|_M,\sigma_{1/2}|_M,\sigma_{1}|_M,\sigma_{3/2}|_M$. Define the intertwining map
$M(\sigma,s):I(\sigma,s)\to I(\sigma,-s)$
$$M(\sigma,s)f(x)=\sigma_{i/2}(w)^{-1}\int_{N}f(wnx)dn$$ if $\sigma=\sigma_{i/2}|_M$, $i=0,1,2,3$. This definition is canonical.
Define $T:\bar I(\sigma,s)\to I(\sigma,-s)$ by
$$Tf(x)=\sigma_{i/2}(w)^{-1}f(wx)$$ if $\sigma=\sigma_{i/2}|_M$, $i=0,1,2,3$.
Also define intertwining maps $A(\sigma,s):I(\sigma,s)\to \bar I(\sigma,s)$ such that
$$A(\sigma,s)f(x)=\int_{\bar N}f(\bar nx)d\bar n$$
and $\bar A(\sigma,s):\bar I(\sigma,s)\to I(\sigma,s)$ such that
$$\bar A(\sigma,s)f(x)=\int_{N}f(nx)dn$$
Then we have
$$M(\sigma,s)=T\circ A(\sigma,s)$$
$M(\sigma,s)$ sends $f_s^{n/2}$ to $c_{n/2}(s)f_{-s}^{n/2}$ for some constant $c_{n/2}(s)$. It is called \emph{Harish-Chandra c-function} sometimes.
It is easy to see that $c_{n/2}(s)=A(\sigma,s)f_s^{n/2}(1)$. For simplicity, we use $I(s)$ in place of $I(\sigma,s)$ and $A(s)$ in place of $A(\sigma,s)$ sometimes.

Define a pairing $(,): I(s)\times I(-\bar s)\to \mathbb C$:
$$(f,g)=\int_K f(k)\overline{g(k)} dk$$
There is also a pairing $(,): \bar I(s)\times \bar I(-\bar s)\to \mathbb C$ defined using the same formula.
The following lemma follows from formal calculations:
\begin{lemma}
For $f\in I(s)$ and $g\in \bar I(-\bar s)$, we have
$(A(s)f,g)=(f,\bar A(-\bar s)g)$.
\end{lemma}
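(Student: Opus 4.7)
The plan is a direct computation that reduces both sides to the same integral on $AM\backslash G$ by exploiting the left-invariance properties of $f$ and $g$.

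First I would expand both sides. By Fubini,
\[
(A(s)f,g) = \int_{\bar N \times K} f(\bar n k)\,\overline{g(k)}\,d\bar n\,dk, \qquad (f,\bar A(-\bar s)g) = \int_{N \times K} f(k)\,\overline{g(nk)}\,dn\,dk.
\]
Since $g\in \bar I(-\bar s)$, taking $a=1$, $m=1$ in its transformation law gives $g(\bar n x) = g(x)$ for all $\bar n \in \bar N$, so $\overline{g(k)} = \overline{g(\bar n k)}$. Symmetrically, $f \in I(s)$ satisfies $f(nx)=f(x)$ for $n\in N$, so $f(k) = f(nk)$. Substituting these identities turns both sides into integrals of the common function $\phi := f\,\overline{g}$:
\[
\mathrm{LHS} = \int_{\bar N \times K} \phi(\bar n k)\,d\bar n\,dk, \qquad \mathrm{RHS} = \int_{N \times K} \phi(nk)\,dn\,dk.
\]

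A short check using $|\sigma(m)|=1$ and $\overline{a^{-\bar s-1}}=a^{-s-1}$ for $a>0$ shows that $\phi$ is left $AM$-invariant on $G$, hence descends to a function on $AM\backslash G$. The final step is to identify both double integrals with $\int_{AM\backslash G}\phi\,d\dot g$ against the canonical quotient measure induced by Haar measure on $G$. Using the Iwasawa decompositions $G = NAK = \bar NAK$, the projections $(n,k)\mapsto AM\cdot nk$ and $(\bar n,k)\mapsto AM\cdot \bar n k$ are almost-everywhere finite-to-one surjections onto $AM\backslash G$ with the same generic multiplicity, and the product measures $dn\,dk$ and $d\bar n\,dk$ pull back the quotient measure with the same constant, provided the Haar measures on $N$ and $\bar N$ are compatibly normalized (as is standard in intertwining-operator constructions).

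The main obstacle is precisely this measure-matching in the last step. In the rank-one $\widetilde{SL}(2,\mathbb R)$ case relevant for Section 5 it can be checked by a direct change of variables between the Iwasawa coordinates of $NK$ and $\bar NK$; in general it follows from the Iwasawa Haar formula $dg = a^{-2\rho}\,dn\,da\,dk = a^{2\rho}\,d\bar n\,da\,dk$ combined with the $A$-invariance of $\phi$, after integrating out the $A$-direction to land on the common quotient integral. Everything else is truly formal, which is presumably why the paper labels the lemma as "formal calculations."
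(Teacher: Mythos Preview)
Your argument is correct; the paper gives no proof beyond the phrase ``follows from formal calculations,'' so your unfolding via the left $AM$-invariance of $\phi=f\,\overline g$ and the identification of both double integrals with the invariant integral on $AM\backslash G$ (using the two Iwasawa decompositions $G=ANK=\bar NAK$) is exactly the sort of computation intended. The measure-compatibility caveat you flag for $dn$ versus $d\bar n$ is the only substantive assumption, and it is implicit in the paper's conventions.
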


\begin{prop}
For those $s\in i\mathbb R$ such that $I(s)$ is irreducible, $\bar A(s)\circ A(s)$ is a non-negative constant.
\end{prop}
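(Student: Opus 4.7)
The plan is to combine Schur's lemma with the pairing identity of Lemma 5.2 to first identify $\bar A(s)\circ A(s)$ with a scalar, and then to recognize that scalar as the norm-squared of $A(s)$ with respect to a positive-definite inner product.

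First I would observe that $A(s)$ is a $G$-intertwining map $I(s)\to\bar I(s)$ and $\bar A(s)$ is a $G$-intertwining map $\bar I(s)\to I(s)$, so their composition $\bar A(s)\circ A(s)$ is a $G$-intertwining endomorphism of $I(s)$. Under the irreducibility hypothesis, Schur's lemma (which applies in the usual admissible smooth category) forces $\bar A(s)\circ A(s)=c\cdot \mathrm{Id}$ for some $c\in\bbC$. The remaining work is to show that $c$ is real and non-negative.

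Next I would exploit the condition $s\in i\bbR$, which gives $-\bar s=s$, so that the pairing of Lemma 5.2 specializes to a sesquilinear form $(\cdot,\cdot)$ on $I(s)\times I(s)$ and likewise on $\bar I(s)\times \bar I(s)$, given by $(f,g)=\int_K f(k)\overline{g(k)}\,dk$. Because $|f(k)|^2\geq 0$ is continuous in $k$ and because the Iwasawa decomposition $G=NAK$ forces $f$ to vanish identically whenever $f|_K\equiv 0$, this pairing is a positive-definite Hermitian inner product on both $I(s)$ and $\bar I(s)$.

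Then the proof is a one-line formal manipulation. Taking $g=A(s)f\in\bar I(s)=\bar I(-\bar s)$ in Lemma 5.2, one has
\begin{equation*}
(A(s)f,\,A(s)f)=(f,\,\bar A(-\bar s)A(s)f)=(f,\,\bar A(s)A(s)f)=c\,(f,f).
\end{equation*}
Since the left-hand side is $\|A(s)f\|^{2}\geq 0$ and $(f,f)>0$ for any nonzero $f$, we conclude $c\in\bbR_{\geq 0}$.

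The only real subtlety, which I would address briefly, is ensuring that everything in sight is well-defined for $s\in i\bbR$: the integrals defining $A(s)$ and $\bar A(s)$ must be interpreted, as usual, by analytic continuation from the half-plane of absolute convergence, and Lemma 5.2 must be applied on a suitable dense domain (e.g.\ $K$-finite vectors) and then extended. Since the statement restricts to $s$ for which $I(s)$ is irreducible, one is automatically off the singular locus of the intertwining integrals, so these standard meromorphic-continuation arguments cause no essential difficulty. This is the only place one could anticipate any obstacle; the rest is a direct application of Schur together with Lemma 5.2.
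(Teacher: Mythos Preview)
Your proof is correct and follows essentially the same route as the paper: Schur's lemma gives a scalar, and then Lemma~5.2 with $g=A(s)f$ and $-\bar s=s$ identifies that scalar (up to conjugation) with $\|A(s)f\|^2/\|f\|^2\geq 0$. The only slip is that the pairing is conjugate-linear in the second variable, so $(f,\bar A(s)A(s)f)=\bar c\,(f,f)$ rather than $c\,(f,f)$; this is harmless since the left-hand side is real, forcing $c=\bar c\geq 0$.
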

\begin{proof}
$\bar A(s)\circ A(s)$ is a constant by Schur's lemma, say $\lambda(s)$. When $s\in i\mathbb R$, $s=-\bar s$, so in lemma 5.2, $(A(s)f,g)=(f,\bar A(s)g)$. Taking $g=A(s)f$, we get $(A(s)f,A(s)f)=(f,\bar A(s)\circ A(s)f)=\bar \lambda(s)(f,f)$, hence $\lambda(s)$ is non-negative.
\end{proof}

Below is a nice proposition of the $c$-function:
\begin{prop}
$c_{n/2}(s)=c_{-n/2}(s)$.
\end{prop}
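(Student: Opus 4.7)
The plan is to compute $c_{n/2}(s)$ explicitly as an integral over $\bar N$, parametrized by $y \in \mathbb{R}$, and exhibit a manifest symmetry under the substitution $y \mapsto -y$ that carries the integrand for $c_{n/2}(s)$ to the one for $c_{-n/2}(s)$.

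Using $c_{n/2}(s) = A(\sigma, s) f_s^{n/2}(1) = \int_{\bar N} f_s^{n/2}(\bar n)\, d\bar n$, I would identify $\bar N$ with $\mathbb{R}$ via $\bar n(y) = y(y) = (\underline y(y), 1)$. The classical Iwasawa decomposition gives $\underline y(y) = \underline n(y/(1+y^2))\, \underline h((1+y^2)^{-1/2})\, r_{\arctan(y)}$; lifting it to $G$ amounts to evaluating two Kubota cocycles along the way. Both turn out to equal $1$: the first because $(-a_0^{-1}, a_0^{-1}) = 1$ (as $a_0^{-1} > 0$), and the second because it reduces to the identity $(-y, y) = 1$. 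Consequently, in Kubota coordinates $y(y) = x(b)\, h(a_0)\, (r_{\arctan(y)}, 1)$, and comparing with $e(\phi) = (r_\phi, \epsilon(\phi/2))$ from Proposition~2.9 identifies the $K$-component as $\epsilon(\arctan(y)/2) \cdot e(\arctan(y))$ (the sign treated as a central element of $K$).

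Applying the defining transformation of $f_s^{n/2}$, together with $\sigma_{n/2}(-1) = (-1)^n$ and $\sigma_{n/2}(e(\phi)) = e^{in\phi/2}$ from Corollary~2.9, yields
\[
c_{n/2}(s) = \int_{-\infty}^{\infty} (1+y^2)^{-(s+1)/2}\, \epsilon(\arctan(y)/2)^n\, e^{in\arctan(y)/2}\, dy.
\]
Then the substitution $y \mapsto -y$ closes the argument via three observations: (a) $(1+y^2)^{-(s+1)/2}$ is even in $y$; (b) $\arctan(-y) = -\arctan(y)$, so $e^{in\arctan(y)/2}$ is sent to $e^{-in\arctan(y)/2}$; (c) since $\arctan(y)/2 \in (-\pi/4, \pi/4)$ stays in the ``generic'' range of $\epsilon$, the defining formula gives $\epsilon(-\theta) = \sgn(\sin(-\theta)\sin(-2\theta)) = \sgn(\sin\theta \sin 2\theta) = \epsilon(\theta)$ on the whole domain of integration. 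Combined with $\epsilon^{-n} = \epsilon^n$ for $\epsilon \in \{\pm 1\}$, the substituted integrand is exactly the one computing $c_{-n/2}(s)$, whence the claimed equality.

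The main technical obstacle is the second step: pinning down the sign $\epsilon(\arctan(y)/2)$ in the $K$-component of the Iwasawa decomposition of $y(y)$ inside the metaplectic cover. This requires careful bookkeeping with the Kubota cocycle and the real Hilbert symbol; once this sign has been extracted, the symmetry $y \mapsto -y$ is essentially automatic because the $\epsilon^n$ factor is fortuitously even in $y$.
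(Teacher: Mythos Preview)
Your proof is correct but takes a different route from the paper. The paper argues conceptually via the outer automorphism $d=\diag(1,-1)\in\GL_2$: conjugation by $d$ lifts to the cover, acts by inversion on $K$ (sending $r_\phi$ to $r_{-\phi}$), and induces an isomorphism $d(s):I(\sigma,s)\to I(\sigma^{-1},s)$. One then checks the operator identity $M(\sigma^{-1},s)\circ d(s)=d(-s)\circ M(\sigma,s)$ and applies both sides to $f_s^{n/2}$; since $d(s)f_s^{n/2}=f_s^{-n/2}$, comparing constants gives $c_{-n/2}(s)=c_{n/2}(s)$ without ever writing down the integral.

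Your approach instead carries out explicitly the lift of the Iwasawa decomposition of $\bar n(y)$ through the Kubota cocycle (this is exactly the computation the paper performs a few lines later, in the proof of Proposition~5.5) and then exploits the substitution $y\mapsto -y$. Morally the two proofs are the same --- your change of variable is precisely the shadow of conjugation by $d$ on $\bar N$ --- but the paper's version is cleaner in that it avoids any cocycle bookkeeping and makes the symmetry manifest at the level of intertwining operators. One minor remark: you carry the factor $\epsilon(\arctan(y)/2)^n$ through the whole argument and then verify it is even in $y$, but in fact for $\theta=\arctan(y)/2\in(-\pi/4,\pi/4)$ one has $\sin\theta$ and $\sin 2\theta$ of the same sign, so $\epsilon(\theta)\equiv 1$ on the entire domain of integration; the paper uses this observation directly.
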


\begin{proof}
Let $d=diag(1,-1)\in GL_2$, the conjugation action of $d$ on $SL_2$ satisfies $dr_{\phi}d^{-1}=r_{-\phi}$. This action lifts to the covering group and it gives an inverse on $K$. Hence the conjugation of functions by $d$ gives an intertwining map $I(\sigma,s)\to I(\sigma^{-1},s)$ which we denote by $d(s)$. We have
$$M(\sigma^{-1},s)\circ d(s)=d(-s)\circ M(\sigma,s)$$
In fact, for any $f\in I(\sigma,s)$,
$$M(\sigma^{-1},s)\circ d(s)f(x)=\sigma_{-n/2}(w)^{-1}\int_N(d(s)f)(wnx)dn$$
$$=\sigma_{-n/2}(w)^{-1}\int_Nf(dwnxd^{-1})dn=\sigma_{n/2}(w)\int_Nf(w^{-1}dnxd^{-1})dn$$
$w^{-1}=mw$ for some $m\in M$, so the last expression is
$$\sigma_{n/2}(w)\sigma(m)\int_Nf(wdnxd^{-1})dn=\sigma_{n/2}(w^{-1})\int_Nf(wdnxd^{-1})dn$$

On the other hand,
$$d(-s)\circ M(\sigma,s)f(x)=\sigma_{n/2}(w)^{-1}\int_Nf(wndxd^{-1})dn$$
$$=\sigma_{n/2}(w)^{-1}\int_Nf(wdnxd^{-1})dn$$
hence the two operators are equal.

Now take $f=f^{n/2}_s$, $M(\sigma^{-1},s)\circ d(s)f_s^{n/2}=c_{-n/2}(s)f_{-s}^{-n/2}$, and $d(-s)\circ M(\sigma,s)f_s^{n/2}=c_{n/2}(s)f_{-s}^{-n/2}$, thus $c_{n/2}(s)=c_{-n/2}(s)$.
\end{proof}

Now we calculate $c_{n/2}(s)$:

\begin{prop}
$$c_{n/2}(s)=\sqrt{\pi}\frac{\Gamma(\frac{s}{2})\Gamma(\frac{s+1}{2})}{\Gamma(\frac{s+1}{2}+\frac{n}{4})\Gamma(\frac{s+1}{2}-\frac{n}{4})}$$
\end{prop}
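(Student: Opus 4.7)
The plan is to reduce $c_{n/2}(s)$ to an explicit one-variable integral over $\bar N$ and then identify it with a classical beta-function integral.

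First I would use the identity $c_{n/2}(s)=A(\sigma,s)f_s^{n/2}(1)=\int_{\bar N}f_s^{n/2}(\bar n)\,d\bar n$, parametrizing $\bar N$ by $t\in\mathbb R$ via the canonical lift $\bar n(t)=(\underline y(t),1)$. To evaluate $f_s^{n/2}(\bar n(t))$ I need the Iwasawa decomposition of $\bar n(t)$ inside the cover $G$. Downstairs in $SL(2,\mathbb R)$ a direct calculation gives $\bar n(t)=\underline x(x(t))\,\underline h(a(t))\,r_{\phi(t)}$ with $a(t)=(1+t^2)^{-1/2}$ and $\phi(t)=\arctan(t)\in(-\pi/2,\pi/2)$. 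Since $N$ and $A$ lift canonically and the continuous lift to $G$ of the path $t\mapsto \bar n(t)$ (which passes through the identity at $t=0$) is unique, the $K$-factor is forced to be the continuous lift of $r_{\phi(t)}$ starting at $1\in K$. By Proposition 2.9 this is $e(\phi(t))=(r_{\phi(t)},\epsilon(\phi(t)/2))$, and by Corollary 2.10 one then has $\sigma_{n/2}(e(\phi(t)))=e^{in\phi(t)/2}$. Substituting into the integral yields
\[
c_{n/2}(s)=\int_{-\infty}^{\infty}(1+t^2)^{-(s+1)/2}\,e^{in\phi(t)/2}\,dt.
\]

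Next I would change variables by $t=\tan\phi$, so that $dt=\sec^2\phi\,d\phi$ and $(1+t^2)^{-(s+1)/2}=\cos^{s+1}\phi$, to obtain
\[
c_{n/2}(s)=\int_{-\pi/2}^{\pi/2}\cos^{s-1}\phi\cdot e^{in\phi/2}\,d\phi.
\]
The imaginary part vanishes by symmetry ($\cos^{s-1}\phi$ is even, $\sin(n\phi/2)$ is odd), and the cosine part is a standard beta-function integral, giving
\[
c_{n/2}(s)=\frac{\pi\,\Gamma(s)}{2^{s-1}\,\Gamma\!\bigl(\tfrac{s+1}{2}+\tfrac{n}{4}\bigr)\Gamma\!\bigl(\tfrac{s+1}{2}-\tfrac{n}{4}\bigr)}.
\]
Finally, applying Legendre's duplication formula $\Gamma(s)=2^{s-1}\pi^{-1/2}\Gamma(s/2)\Gamma((s+1)/2)$ collapses the prefactor $\pi\Gamma(s)/2^{s-1}$ to $\sqrt{\pi}\,\Gamma(s/2)\Gamma((s+1)/2)$, producing the claimed identity.

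In my view the only nontrivial step is pinning down the $K$-factor in the Iwasawa decomposition of $\bar n(t)$ in the cover: if one used the wrong lift of $r_{\phi(t)}$, the character value $e^{in\phi/2}$ would be replaced by its negative and the $n$-dependence would be spoiled. The continuity/uniqueness argument combined with Proposition 2.9 is what ensures the correct sign $\epsilon(\phi/2)$ appears. Once that is settled, the remaining computation is a routine reduction to a classical integral followed by the duplication formula.
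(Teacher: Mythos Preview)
Your proof is correct and the setup---reducing $c_{n/2}(s)$ to the integral $\int_{\mathbb R}(1+t^2)^{-(s+1)/2}e^{in(\arctan t)/2}\,dt$ via the Iwasawa decomposition of $\bar n(t)$ in the cover---matches the paper's Proposition~5.5 exactly. (The paper pins down the $K$-factor by a direct Kubota-cocycle check that $\epsilon(\phi(t)/2)=1$ for $\phi(t)\in(-\pi/2,\pi/2)$; your continuity argument reaches the same conclusion.)

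The difference lies in how the integral is evaluated. The paper (Lemma~5.6) rewrites the integrand as $(1+it)^{-(2s+n+2)/4}(1-it)^{-(2s-n+2)/4}$, interprets the integral as the limit $y\to 0$ of a Fourier transform, and invokes a table entry expressing that transform through Whittaker functions $W_{\rho,\sigma}$; extracting the limit and then applying the duplication formula gives the result. Your route---substituting $t=\tan\phi$ to obtain $\int_{-\pi/2}^{\pi/2}\cos^{s-1}\phi\,e^{in\phi/2}\,d\phi$ and recognizing this as the classical Cauchy beta integral $\int_0^{\pi/2}\cos^{\nu-1}\phi\cos(b\phi)\,d\phi=\pi\,\Gamma(\nu)\big/\bigl(2^{\nu}\Gamma(\tfrac{\nu+b+1}{2})\Gamma(\tfrac{\nu-b+1}{2})\bigr)$---is considerably more elementary and self-contained, avoiding the Whittaker-function machinery entirely. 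Both arrive at $\pi\,2^{1-s}\Gamma(s)\big/\bigl(\Gamma(\tfrac{s+1}{2}+\tfrac{n}{4})\Gamma(\tfrac{s+1}{2}-\tfrac{n}{4})\bigr)$ before the final duplication step.
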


\begin{proof}
$c_{n/2}(s)=\int_{\bar N}f_s^{n/2}(\bar n)d\bar n$.
For $\bar n=\begin{bmatrix} 1&0\\t&1\\ \end{bmatrix}\in \bar N$, $$\bar n=\begin{bmatrix} 1&x\\0&1\\ \end{bmatrix}\begin{bmatrix} y^{1/2}&0\\0&y^{-1/2}\\ \end{bmatrix}(r_{\phi},1)=\begin{bmatrix} y^{1/2}cos\phi+xy^{-1/2}sin\phi&-y^{1/2}sin\phi+xy^{-1/2}cos\phi\\y^{-1/2}sin\phi&y^{-1/2}cos\phi\\ \end{bmatrix}$$.

$$\bar n\cdot i=\frac{i}{ti+1}=\frac{t}{t^2+1}+\frac{1}{t^2+1}i=x+yi$$ so $a(t)=y^{1/2}=\frac{1}{\sqrt{t^2+1}}$, $tan\phi=t$, $\phi=\phi(t)=arctant$. Then,
$$c_{n/2}(s)=\int_{\mathbb R}f_s^{n/2}(\begin{bmatrix} 1&0\\t&1\\ \end{bmatrix})dt=\int_{\mathbb R}\frac{1}{(t^2+1)^{(s+1)/2}}\sigma_{n/2}(r_{\phi(t)},1)dt$$
$\epsilon(\phi(t)/2)=sgn(sin(\phi(t)/2)sin(\phi(t)))=1$ since $\phi(t)\in (-\pi/2,\pi/2)$, hence $\sigma_{n/2}(r_{\phi(t)},1)=e^{in\phi(t)/2}$. Finally, by substituting those expressions into the last integral, we get $$c_{n/2}(s)=\int_{\mathbb{R}}\frac{1}{(t^2+1)^{(s+1)/2}}e^{in(arctant)/2}=\int_{\mathbb{R}}\frac{1}{(t^2+1)^{(s+1)/2}}(\frac{1-it}{\sqrt{t^2+1}})^{-n/2}dt$$
Now the proposition follows from the lemma below.
\end{proof}

\begin{lemma}
For any $n\in\mathbb Z$, $$\int_{\mathbb{R}}\frac{1}{(t^2+1)^{(s+1)/2}}(\frac{1-it}{\sqrt{t^2+1}})^{-n/2}dt=\sqrt{\pi}\frac{\Gamma(\frac{s}{2})\Gamma(\frac{s+1}{2})}{\Gamma(\frac{s+1}{2}+\frac{n}{4})\Gamma(\frac{s+1}{2}-\frac{n}{4})}$$
\end{lemma}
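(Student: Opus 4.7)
The plan is to recognize the integrand as an instance of a classical beta-type integral in disguise. The first step is to rewrite the integrand on the left-hand side in a more symmetric form. Since $(1-it)(1+it)=1+t^2$, one has $\frac{1-it}{\sqrt{t^2+1}}=\left(\frac{1-it}{1+it}\right)^{1/2}$ (with the appropriate principal branch), so
\begin{equation*}
\left(\frac{1-it}{\sqrt{t^2+1}}\right)^{-n/2}=(1-it)^{-n/4}(1+it)^{n/4}.
\end{equation*}
Combining with $(t^2+1)^{(s+1)/2}=(1-it)^{(s+1)/2}(1+it)^{(s+1)/2}$, the integral becomes
\begin{equation*}
\int_{\mathbb{R}}\frac{dt}{(1-it)^{a}(1+it)^{b}},\qquad a=\tfrac{s+1}{2}+\tfrac{n}{4},\ \ b=\tfrac{s+1}{2}-\tfrac{n}{4},
\end{equation*}
so $a+b=s+1$. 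Thus the problem reduces to evaluating this standard Mellin--Barnes style integral.

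The second step is to evaluate the integral by a Gamma-function trick. Using the representation $(1-it)^{-a}=\frac{1}{\Gamma(a)}\int_0^\infty u^{a-1}e^{-u(1-it)}du$ (valid for $\mathrm{Re}(a)>0$) and the analogous one for $(1+it)^{-b}$, Fubini gives an inner $t$-integral $\int_{\mathbb{R}}e^{it(u-v)}dt=2\pi\delta(u-v)$, so
\begin{equation*}
\int_{\mathbb{R}}\frac{dt}{(1-it)^{a}(1+it)^{b}}=\frac{2\pi}{\Gamma(a)\Gamma(b)}\int_0^\infty u^{a+b-2}e^{-2u}du=\frac{2\pi\cdot 2^{1-a-b}\Gamma(a+b-1)}{\Gamma(a)\Gamma(b)}.
\end{equation*}
Substituting $a+b=s+1$ gives $\frac{\pi\cdot 2^{1-s}\Gamma(s)}{\Gamma(a)\Gamma(b)}$.

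The third step is to match this with the right-hand side of the lemma via Legendre's duplication formula $\Gamma(s)=\frac{2^{s-1}}{\sqrt{\pi}}\Gamma(s/2)\Gamma((s+1)/2)$, which collapses the factor $\pi\cdot 2^{1-s}\Gamma(s)$ to $\sqrt{\pi}\,\Gamma(s/2)\Gamma((s+1)/2)$, yielding exactly
\begin{equation*}
\sqrt{\pi}\,\frac{\Gamma(s/2)\Gamma((s+1)/2)}{\Gamma(\frac{s+1}{2}+\frac{n}{4})\Gamma(\frac{s+1}{2}-\frac{n}{4})},
\end{equation*}
as required. The main minor obstacle is the justification of the formal distributional manipulation: for $\mathrm{Re}(s)$ large enough that both $\mathrm{Re}(a)>0$ and $\mathrm{Re}(b)>0$ and $\mathrm{Re}(a+b)>1$ to guarantee absolute convergence, one either interprets $\int e^{it(u-v)}dt$ as a Schwartz-class Fourier inversion after inserting a convergence factor $e^{-\epsilon t^2}$ and letting $\epsilon\to 0^+$, or equivalently substitutes $t=\tan\theta$ to convert to a genuine Beta integral $\int_0^\pi(\sin\theta)^{s-1}e^{i(n/2)\theta}d\theta$ and uses the classical formula for it. The final identity then extends to all $s$ for which both sides are meromorphic by analytic continuation.
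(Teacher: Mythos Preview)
Your proof is correct. Both you and the paper begin by rewriting the integrand as $(1+it)^{-2u}(1-it)^{-2v}$ with $2u+2v=s+1$, and both finish with Legendre's duplication formula to convert $\pi\,2^{1-s}\Gamma(s)$ into $\sqrt{\pi}\,\Gamma(s/2)\Gamma((s+1)/2)$. The genuine difference lies in the evaluation of $\int_{\mathbb{R}}(1+it)^{-2u}(1-it)^{-2v}\,dt$: the paper regards this as $\lim_{y\to 0^+}\hat f(y)$, quotes a table formula expressing $\hat f(y)$ in terms of the Whittaker function $W_{v-u,\,1/2-u-v}(2y)$, expands $W$ via $M_{\rho,\sigma}$ and the confluent hypergeometric $F$, and then reads off the $y\to 0$ limit. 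Your route bypasses all of this special-function machinery: inserting the Euler representation $(1\mp it)^{-c}=\Gamma(c)^{-1}\int_0^\infty u^{c-1}e^{-u(1\mp it)}\,du$ and integrating out $t$ as $2\pi\delta(u-v)$ collapses everything directly to a Gamma integral. This is both shorter and more self-contained; the only cost is the distributional step, which you correctly note can be made rigorous either by a Gaussian regulator $e^{-\epsilon t^2}$ or, more classically, by the substitution $t=\tan\theta$ reducing to the beta integral $\int_0^\pi (\sin\theta)^{s-1}e^{in\theta/2}\,d\theta$. The paper's approach has the mild advantage that the Whittaker-function identity it invokes is a fully rigorous tabulated result, so no separate justification step is needed; yours has the advantage of transparency and of not requiring any external reference.
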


\begin{proof}
The integral is absolutely convergent for $Re(s)>0$, the integrand is equal to $$(1+it)^{(-2s-n-2)/4}(1-it)^{(-2s+n-2)/4}$$ which we denote by $f(t)$. By Lebesgue's dominated convergence theorem, $$lim_{y\to 0}\int_{\mathbb{R}}f(t)e^{-ity}dt=\int_{\mathbb{R}}f(t)dt$$
Let $2u=(2s+n+2)/4$, $2v=(2s-n+2)/4$, By [2],
$$\hat{f}(y)=\int_{\mathbb{R}}f(t)e^{-ity}dt=2\pi 2^{-u-v}\Gamma(2v)^{-1}y^{u+v-1}W_{v-u,1/2-v-u}(2y)$$ for $y>0$. Here the Whittaker function
$$W_{\rho,\sigma}(z)=\frac{\Gamma(-2\sigma)}{\Gamma(1/2-\sigma-\rho)}M_{\rho,\sigma}(z)+\frac{\Gamma(2\sigma)}{\Gamma(1/2+\sigma-\rho)}M_{\rho,-\sigma}(z)$$ where
\begin{align*}
&M_{\rho,\sigma}(z)=z^{1/2+\sigma}e^{-(1/2)z}F(1/2+\sigma-\rho,2\sigma+1,z)\\
&F(a,b,z)=1+\sum_{k\geq 1}\frac{a(a+1)...(a+k-1)}{b(b+1)...(b+k-1)}\frac{z^k}{k!}
\end{align*}
So $$W_{v-u,1/2-v-u}(2y)=\frac{\Gamma(-1+2u+2v)}{\Gamma(2u)}(2y)^{1-u-v}e^{-y}F(1-2v,2-2u-2v,2y)$$
$$+\frac{\Gamma(1-2u-2v)}{\Gamma(1-2v)}(2y)^{u+v}e^{-y}F(2u,2u+2v,2y)$$
Thus $$y^{u+v-1}W_{v-u,1/2-v-u}(2y)\to \frac{\Gamma(-1+2u+2v)}{\Gamma(2u)}2^{1-u-v}$$ as $y\to 0$.
It follows that
\begin{align*}
\int_{\mathbb{R}}f(t)dt=lim_{y\to 0}\hat{f}(y)
&=2\pi 2^{-u-v}\Gamma(2v)^{-1}\frac{\Gamma(-1+2u+2v)}{\Gamma(2u)}2^{1-u-v}\\
&=\pi 2^{1-s}\frac{\Gamma(s)}{\Gamma(\frac{s+1}{2}+\frac{n}{4})\Gamma(\frac{s+1}{2}-\frac{n}{4})}
\end{align*}
By the double formula, $$\Gamma(s)=(1/\sqrt{\pi})2^{s-1}\Gamma(\frac{s}{2})\Gamma(\frac{s+1}{2})$$ hence $$\int_{\mathbb{R}}f(t)dt=\sqrt{\pi}\frac{\Gamma(\frac{s}{2})\Gamma(\frac{s+1}{2})}{\Gamma(\frac{s+1}{2}+\frac{n}{4})\Gamma(\frac{s+1}{2}-\frac{n}{4})}$$ \end{proof}

Now we consider a slightly more general situation, which will be used in next section. Let $(\sigma, V)$ be a finite dimensional representation of $M$, which is the restriction of a representation $(\mu,V)$ of $K$, $I(\sigma,s)$ be the space of functions $f:G\to V$ such that
$f(namg)=a^{s+1}\sigma(m)f(g)$.
For $f\in I(\sigma,s)$, define $$M(s)f(x)=\mu(w)^{-1}\int_N f(wnx)dn$$
By proposition 3.5, $M(s)$ maps $I(\sigma,s)$ into $I(\sigma,-s)$. For $v\in V$, define $f_s^v(nak)=a^{s+1}\mu(k)v$. Then $v\mapsto f_s^v$ is an embedding of $(\mu,V)$ into $I(\sigma,s)$, as a $K$-subrepresentation.

\begin{prop}
Assume $(\mu,V)$ is a direct sum of $\sigma_{\pm n/2}$ for a fixed integer $n$. Then $(M(s)f_s^v)(1)=c_{n/2}(s)v$.
\end{prop}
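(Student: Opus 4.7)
The plan is to reduce to the scalar computations of Propositions 5.5 and 5.4 by decomposing $v$ along the $K$-isotypic components of $V$. Since $\sigma_{n/2}$ and $\sigma_{-n/2}$ are \emph{characters} of $K$ (Corollary 2.9), the hypothesis provides a canonical decomposition $V = V_{n/2} \oplus V_{-n/2}$ into eigenspaces, on which $\mu(k)$ acts by the scalar $\sigma_{\pm n/2}(k)$ for every $k \in K$. Writing $v = v_+ + v_-$ with $v_\pm \in V_{\pm n/2}$ and using that $v \mapsto M(s) f_s^v$ is linear, it suffices to verify $(M(s) f_s^{v_\pm})(1) = c_{\pm n/2}(s)\, v_\pm$ for each piece separately; invoking $c_{n/2}(s) = c_{-n/2}(s)$ from Proposition 5.4 then merges the two contributions into the desired $c_{n/2}(s)\, v$.

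For $v_+ \in V_{n/2}$, the definition $f_s^{v_+}(nak) = a^{s+1}\mu(k)v_+$ collapses to $a^{s+1}\sigma_{n/2}(k)v_+$, so $f_s^{v_+}$ factors as the scalar function $f_s^{n/2}$ of Section 5.2 times the constant vector $v_+$. Pulling $v_+$ out of the integral in the definition of $M(s)$,
\[
\int_N f_s^{v_+}(wn)\,dn \;=\; v_+ \int_N f_s^{n/2}(wn)\,dn,
\]
and the scalar integral on the right equals $\sigma_{n/2}(w)\, c_{n/2}(s)$ by the definition of the scalar intertwiner $M(\sigma_{n/2}|_M, s)$ together with the value of $c_{n/2}(s)$ computed in Proposition 5.5.

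Applying the outer normalization $\mu(w)^{-1}$ (this is legitimate because $w = w_\alpha(1) \in K$ by Lemma 2.7) and using that $\mu(w)^{-1} v_+ = \sigma_{n/2}(w)^{-1} v_+$ since $v_+ \in V_{n/2}$, I obtain
\[
(M(s) f_s^{v_+})(1) \;=\; \mu(w)^{-1}\, \sigma_{n/2}(w)\, c_{n/2}(s)\, v_+ \;=\; c_{n/2}(s)\, v_+.
\]
The same computation applied to $v_- \in V_{-n/2}$ yields $(M(s) f_s^{v_-})(1) = c_{-n/2}(s)\, v_-$, which by Proposition 5.4 equals $c_{n/2}(s)\, v_-$. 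Summing the two isotypic contributions finishes the argument.

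There is no real obstacle here; the argument is essentially bookkeeping, and the one delicate point is that it is precisely the normalizing factor $\mu(w)^{-1}$ in the definition of $M(s)$ that cancels the spurious $\sigma_{\pm n/2}(w)$ emerging from the scalar reduction, so that the $c$-function eigenvalue comes out to the \emph{same} scalar $c_{n/2}(s)$ on both isotypic pieces simultaneously. Without this normalization the $v_+$ and $v_-$ contributions would scale by different phases and the clean formula would fail, which is precisely why this lemma will be the natural bridge between the scalar $\widetilde{SL}(2,\mathbb{R})$ calculation of Section 5 and the general-rank assembly carried out in Section 6.
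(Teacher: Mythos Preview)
Your proof is correct and follows essentially the same approach as the paper: decompose $v$ along the $\sigma_{\pm n/2}$ summands, reduce to the scalar computation of Proposition~5.5 on each piece, and invoke $c_{n/2}(s)=c_{-n/2}(s)$ from Proposition~5.4 to recombine. The paper's own proof is a two-line version of exactly this argument; your write-up simply makes explicit the factorization $f_s^{v_+}=f_s^{n/2}\cdot v_+$ and the cancellation of $\mu(w)^{-1}$ against $\sigma_{n/2}(w)$, which the paper leaves implicit in the phrase ``by the definition of $M(s)$ and Proposition~5.5.''
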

\begin{proof}
If $v$ belongs to one of those summands, then by the definition of $M(s)$ and proposition 5.5, $(M(s)f_s^v)(1)=c_{n/2}(s)v$. Because $c_{n/2}(s)=c_{-n/2}(s)$, this is valid for any $v\in V$.
\end{proof}

\section{Action of intertwining operators on pseudo-spherical $K$-types}
This section contains the main result of this paper. Let $G$ be the unique nontrivial two-fold cover of a split real group $\underline G$. Assume $\sigma$ is a pseudospherical representation of $M$ and $\mu_{\sigma}$ is the pseudospherical representation of $K$ corresponding to $\sigma$.  We note that the multiplicity of $\mu_{\sigma}$ in  $I(P,\sigma,\chi)$ is one and then calculate the action of the intertwining operator on it, in particular, the Harish-Chandra $c$-function.

\begin{lemma}
As $K$-representation, the multiplicity of $\mu_{\sigma}$ in $I(P,\sigma,\chi)$ is 1.
\end{lemma}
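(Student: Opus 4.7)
The plan is to restrict to $K$, use the compact picture of the principal series via the Iwasawa decomposition, and then apply Frobenius reciprocity, reducing the statement to an assertion already contained in Theorem 3.1.

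More concretely, by the Iwasawa decomposition $G = NAK$, any $f \in I(P,\sigma,\chi)$ is determined by its restriction $f|_K$, and conversely a function $\varphi : K \to V$ extends to an element of $I(P,\sigma,\chi)$ if and only if $\varphi(mk) = \sigma(m)\varphi(k)$ for all $m \in M$, $k \in K$. This gives a $K$-equivariant isomorphism
$$I(P,\sigma,\chi)\big|_K \;\cong\; \mathrm{Ind}_M^K(\sigma),$$
where $K$ acts by right translation on both sides (the $\delta^{1/2}\chi$-factor drops out because the $A$-component of $ak$ for $k \in K$ is trivial in this compact picture).

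Next I would invoke Frobenius reciprocity to compute
$$\dim \mathrm{Hom}_K\bigl(\mu_\sigma,\; \mathrm{Ind}_M^K(\sigma)\bigr) \;=\; \dim \mathrm{Hom}_M\bigl(\mu_\sigma\big|_M,\; \sigma\bigr).$$
By Theorem 3.1, $\mu_\sigma|_M = \sigma$, and since $\sigma$ is an irreducible representation of $M$, Schur's lemma gives $\dim \mathrm{Hom}_M(\sigma,\sigma) = 1$. Hence the multiplicity of $\mu_\sigma$ in $I(P,\sigma,\chi)|_K$ equals one, as claimed.

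I do not anticipate any real obstacle here: the compact picture is standard, and the content of the statement is essentially a repackaging of the uniqueness in Theorem 3.1. The only point that deserves a brief check is that the identification $I(P,\sigma,\chi)|_K \cong \mathrm{Ind}_M^K(\sigma)$ is genuinely $K$-equivariant, which follows directly from the transformation rule $f(namx) = \delta(a)^{1/2}\chi(a)\sigma(m)f(x)$ together with the fact that $K \cap P = M$.
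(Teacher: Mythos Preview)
Your proof is correct and follows essentially the same approach as the paper: identify $I(P,\sigma,\chi)|_K$ with $\mathrm{Ind}_M^K(\sigma)$ via the compact picture, apply Frobenius reciprocity, use $\mu_\sigma|_M=\sigma$ from Theorem~3.1, and conclude by Schur's lemma. Your version is simply more explicit about why the compact picture gives a $K$-equivariant isomorphism.
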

\begin{proof}
It is easy to see that as a $K$-representation, $I(P,\sigma,\chi)$ is isomorphic to $Ind_M^K(\sigma)$. By Frobenius reciprocity, $Hom_K(\mu_{\sigma},Ind_M^K(\sigma))=Hom_M(\sigma,\sigma)$, which is isomorphic to $\mathbb C$ by Schur's lemma.
\end{proof}

Let $\phi$ be the unique element in $Hom_K(\mu_{\sigma},I(P,\sigma,\chi))$ such that $(\phi v)(1)=v$, $\forall v\in V$ and $\psi$ be the unique element in $Hom_K(\mu_{\sigma},I(P,\sigma,w\chi))$ such that $(\psi v)(1)=v$, $\forall v\in V$. Then $M(w,\sigma,\chi)(\phi v)=c\cdot (\psi v)$ for some nonzero constant $c\in \mathbb C$ which does not depend on $v$.

Let $s=(s_1,...,s_l)\in \mathbb C^l$ and take $\chi=\chi_s$ to be the character of $A$ such that $$\chi_s(h_1(t_1)...h_l(t_l))=t_1^{s_1}...t_l^{s_l}$$ $t_i>0$. We write $I(P,\sigma,s)$ instead of $I(P,\sigma,\chi)$. Let $ws\in\mathbb C^l$ be such that $w\chi_s=\chi_{ws}$. We write $M(w,s)$ for the intertwining map instead of $M(w,\sigma,\chi_s)$.

\begin{lemma}
Define a function $f_{P,\mu_{\sigma},s}^v:G\to V$ such that
$$f_{P,\mu_{\sigma},s}^v(nak)=\chi_s(a)\delta_N(a)^{1/2}\mu_{\sigma}(k)v$$
Then $f_{P,\mu_{\sigma},s}^v$ is well-defined and $f_{P,\mu_{\sigma},s}^v\in I(P,\sigma,s)$. For simplicity, we write $f_s^v$ instead when there is no confusion.
\end{lemma}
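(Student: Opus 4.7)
The plan is to split the lemma into its two assertions: (i) well-definedness of $f_{P,\mu_\sigma,s}^v$, and (ii) the fact that it lies in $I(P,\sigma,s)$. Both will follow from the Iwasawa decomposition together with the key identity $\mu_\sigma|_M=\sigma$ from Theorem 3.1.

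For (i), I would first recall the Iwasawa decomposition $G=NAK$. Since $\underline G=NA\underline K$ uniquely (i.e.\ the multiplication map $N\times A\times \underline K\to\underline G$ is a diffeomorphism), and $N$, $A$ lift uniquely into $G$ while $K=p^{-1}(\underline K)$, the map $N\times A\times K\to G$, $(n,a,k)\mapsto nak$ is also a bijective diffeomorphism: if $nak=n'a'k'$ then projecting to $\underline G$ gives $n=n'$, $a=a'$, and $k'=\epsilon k$ for some $\epsilon\in\mu_2$; substituting back forces $\epsilon=1$. Thus the prescription $nak\mapsto \chi_s(a)\delta_N(a)^{1/2}\mu_\sigma(k)v$ defines a function on $G$ unambiguously, and it is smooth because the Iwasawa factorization is a diffeomorphism and each of $\chi_s$, $\delta_N^{1/2}$, $\mu_\sigma$ is smooth.

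For (ii), given $g\in G$ write $g=n'a'k'$ and take $n_0\in N$, $a_0\in A$, $m\in M$. I need to rewrite $n_0 a_0 m g$ in the form $\tilde n\tilde a\tilde k$. Since $A$ is central in $T=AM$ and both $A$ and $M$ normalize $N$, I get
\[
n_0 a_0 m \cdot n' a' k' \;=\; n_0\bigl(a_0(mn'm^{-1})a_0^{-1}\bigr)\,(a_0 a')\,(mk'),
\]
so $\tilde n=n_0\cdot a_0(mn'm^{-1})a_0^{-1}\in N$, $\tilde a=a_0 a'\in A$, and $\tilde k=mk'\in K$ since $M\subset K$. Applying the definition,
\[
f(n_0 a_0 m g)=\chi_s(a_0 a')\delta_N(a_0 a')^{1/2}\mu_\sigma(mk')v
=\chi_s(a_0)\delta_N(a_0)^{1/2}\mu_\sigma(m)\bigl(\chi_s(a')\delta_N(a')^{1/2}\mu_\sigma(k')v\bigr).
\]
Now invoke Theorem 3.1: $\mu_\sigma|_M=\sigma$, so $\mu_\sigma(m)=\sigma(m)$. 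The right-hand side becomes $\delta_N(a_0)^{1/2}\chi_s(a_0)\sigma(m)f(g)$, which is exactly the transformation law characterizing $I(P,\sigma,s)$.

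Neither step presents a real obstacle; the only point that requires a moment's care is verifying that the Iwasawa decomposition remains a diffeomorphism after passing to the double cover, which I would handle by the two-line argument above using uniqueness downstairs plus the observation that $K$ is the full preimage of $\underline K$. Everything else is formal manipulation exploiting that $A$ is central in $MA$ and that $\mu_\sigma$ restricts to $\sigma$.
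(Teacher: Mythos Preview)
Your proof is correct and follows essentially the same approach as the paper's: both invoke uniqueness of the Iwasawa decomposition for well-definedness and then verify the $NAM$-transformation law by rewriting $n_0a_0m\cdot n'a'k'$ in Iwasawa form using that $T$ normalizes $N$, that $A$ is central in $T$, and that $\mu_\sigma|_M=\sigma$. The only differences are cosmetic: you supply the short lifting argument for uniqueness of Iwasawa on the cover (which the paper simply asserts) and you handle $n_0$, $a_0$, $m$ in a single computation, whereas the paper checks left-$N$, left-$A$, and left-$M$ equivariance separately.
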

\begin{proof}
Since the Iwasawa decomposition is unique(It is not true in the $p$-adic case), $f_s^v$ is well-defined.
$f_s^v(nx)=f_s^v(x)$ is evident. For any $a\in A$, $f_s^v(ax)=f_s^v(an(x)a(x)k(x))$. Since $T$ normalizes $N$, it is equal to $\chi_s(a)\delta_N(a)^{1/2}f_s^v(x)$. Finally, since $T$ normalizes $N$ and $A$ is contained in the central of $T$, $f_s^v(mx)=f_s^v(mn(x)a(x)k(x))=f_s^v(n'(x)ma(x)k(x))=f_s^v(n'(x)a(x)mk(x))=\sigma(m)f_s^v(x)$. Thus $f_s^v\in I(P,\sigma,s)$.
\end{proof}

\begin{lemma}
Define a map $\phi: \mu_{\sigma}\to I(P,\sigma,s)$, $v\mapsto f_s^v$, then $\phi$ is a $K$-intertwining map.
\end{lemma}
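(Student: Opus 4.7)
The plan is to verify directly from the definition that right translation on $I(P,\sigma,s)$ corresponds, under $\phi$, to the $K$-action $\mu_\sigma$ on $V$. Concretely, I need to show that for every $k_0 \in K$ and $v \in V$,
$$\rho(k_0)\bigl(f_s^v\bigr) \;=\; f_s^{\mu_\sigma(k_0)v},$$
i.e.\ that $f_s^v(xk_0) = f_s^{\mu_\sigma(k_0)v}(x)$ for every $x \in G$.

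First I would write $x$ in its Iwasawa decomposition $x = n(x)a(x)k(x)$ using the decomposition $G = NAK$ established in Section 2. Since $K$ is a subgroup, multiplying on the right by $k_0$ does not disturb the $N$- and $A$-parts: the element $xk_0 = n(x)a(x)\bigl(k(x)k_0\bigr)$ is already in Iwasawa form, with $N$-component $n(x)$, $A$-component $a(x)$, and $K$-component $k(x)k_0$. By uniqueness of the Iwasawa decomposition (Lemma 6.2), the definition of $f_s^v$ then gives
$$f_s^v(xk_0) \;=\; \chi_s(a(x))\,\delta_N(a(x))^{1/2}\,\mu_\sigma\bigl(k(x)k_0\bigr)v.$$

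Using the fact that $\mu_\sigma$ is a representation of $K$, I can factor $\mu_\sigma(k(x)k_0)v = \mu_\sigma(k(x))\bigl(\mu_\sigma(k_0)v\bigr)$, and the right-hand side becomes
$$\chi_s(a(x))\,\delta_N(a(x))^{1/2}\,\mu_\sigma(k(x))\bigl(\mu_\sigma(k_0)v\bigr) \;=\; f_s^{\mu_\sigma(k_0)v}(x),$$
by the very definition of $f_s^{(\cdot)}$. This shows $\phi$ intertwines the $K$-actions. Linearity of $\phi$ is clear, since $f_s^v(x)$ depends linearly on $v$ through $\mu_\sigma(k(x))v$.

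There is essentially no obstacle here; the whole argument is a one-line unwinding of Lemma 6.2, once one observes that right multiplication by $k_0 \in K$ only modifies the $K$-part of the Iwasawa decomposition. The only point worth double-checking is that $\phi$ lands in $I(P,\sigma,s)$, which is exactly the content of Lemma 6.2 itself, so nothing further is required.
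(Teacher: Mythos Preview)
Your proof is correct and follows essentially the same approach as the paper: both arguments write $x$ in Iwasawa form, observe that right multiplication by $k_0\in K$ only changes the $K$-component, and then use the homomorphism property of $\mu_\sigma$ to identify $f_s^v(xk_0)$ with $f_s^{\mu_\sigma(k_0)v}(x)$. Your version is in fact slightly cleaner notationally, since the paper writes $\sigma(k(x))$ and $\sigma(k)$ where $\mu_\sigma$ is what is meant.
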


\begin{proof}
We need to show $\phi(\sigma(k)v)=R(k)\phi(v)$. Assume for $x\in G$, $x=n(x)a(x)k(x)$ is the Iwasawa decomposition of $x$.
$$\phi(\sigma(k)v)(x)=f_s^{\sigma(k)v}(x)=\chi_s(a(x))\delta_N(a(x))^{1/2}\sigma(k(x))\sigma(k)v$$
On the other hand,
\begin{align*}
R(k)\phi(v)(x)
&=f_s^v(xk)\\
&=\chi_s(a(x))\delta_N(a(x))^{1/2}\sigma(k(x)k)v\\
&=\chi_s(a(x))\delta_N(a(x))^{1/2}\sigma(k(x))\sigma(k)v
\end{align*}
proved the identity.
\end{proof}

\begin{prop}
Assume $\sigma$ is a genuine pseudospherical representation of $M$, then $\mu_{\sigma}|_{K_{\alpha}}=m\sigma_{1/2}\oplus m'\sigma_{-1/2}$ for some integers $m$, $m'$ when $\alpha$ is metaplectic, and $\mu_{\sigma}|_{K_{\alpha}}=m\cdot 1$ for some integer $m$ when $\alpha$ is not metaplectic. Here $K_{\alpha}=\Phi_{\alpha}(\widetilde{SO}(2))$.
\end{prop}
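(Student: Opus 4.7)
The plan is to decompose $\mu_{\sigma}|_{K_{\alpha}}$ into characters and then use the infinitesimal pseudospherical condition to pin down which characters can occur. First I would observe that $K_{\alpha} = \Phi_{\alpha}(\widetilde{SO}(2))$ is a compact abelian Lie group, being a continuous image of the compact abelian group $\widetilde{SO}(2)$. Hence $\mu_{\sigma}|_{K_{\alpha}}$ splits as a finite direct sum of characters, and pulling back along $\Phi_{\alpha}$ the composition $\mu_{\sigma} \circ \Phi_{\alpha}|_{\widetilde{SO}(2)}$ is a sum of characters of $\widetilde{SO}(2)$, which by Corollary 2.9 are exactly those of the form $\sigma_{n/2}$ for $n \in \mathbb{Z}$.

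The heart of the argument is a short Lie algebra computation. Set $Z = X - Y \in \mathfrak{sl}(2,\mathbb{R})$. By Proposition 2.11, $\exp(-tZ) = (r_{t}, \epsilon(t/2))$ in $\widetilde{SL}(2,\mathbb{R})$, so
$$\sigma_{n/2}(\exp(-tZ)) = e^{int/2},$$
and differentiating at $t=0$ gives $d\sigma_{n/2}(iZ) = n/2$. Since $\Phi_{\alpha}$ sends $X \mapsto X_{\alpha}$ and $Y \mapsto X_{-\alpha}$, it carries $Z$ to $Z_{\alpha} = X_{\alpha} - X_{-\alpha}$. Therefore, on any $\sigma_{n/2}$-isotypic summand of $\mu_{\sigma} \circ \Phi_{\alpha}|_{\widetilde{SO}(2)}$, the operator $d\mu_{\sigma}(iZ_{\alpha})$ acts by the scalar $n/2$.

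To finish, I would invoke Definition 3.1: since $\mu_{\sigma}$ is pseudospherical, the eigenvalues of $d\mu_{\sigma}(iZ_{\alpha})$ must lie in $\{\pm 1/2\}$ when $\alpha$ is metaplectic and in $\{0\}$ otherwise. In the metaplectic case only $n \in \{\pm 1\}$ is allowed, giving $\mu_{\sigma}|_{K_{\alpha}} = m\sigma_{1/2} \oplus m'\sigma_{-1/2}$; in the non-metaplectic case only $n = 0$ is allowed, giving $\mu_{\sigma}|_{K_{\alpha}} = m \cdot 1$.

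There is no real obstacle here: the proposition is a direct translation of the infinitesimal pseudospherical condition through the explicit formula for $\exp$ on $\widetilde{SL}(2,\mathbb{R})$ in Proposition 2.11 and the character parametrization of $\widetilde{SO}(2)$ in Corollary 2.9. The only point requiring any care is the transport of the derivative computation along $\Phi_{\alpha}$, which is immediate from $\Phi_{\alpha}(X) = X_{\alpha}$ and $\Phi_{\alpha}(Y) = X_{-\alpha}$.
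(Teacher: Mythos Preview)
Your proposal is correct and follows essentially the same approach as the paper: decompose $\mu_{\sigma}|_{K_{\alpha}}$ into characters of the circle group $K_{\alpha}$, identify those characters with the $\sigma_{n/2}$ via the explicit exponential formula of Proposition~2.11, and then read off from Definition~3.1 that only $n\in\{\pm1\}$ (metaplectic case) or $n=0$ (non-metaplectic case) can occur. The paper's proof is just a terser version of your argument, exponentiating the eigenvalue constraint on $d\mu_{\sigma}(iZ_{\alpha})$ directly rather than differentiating the characters.
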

\begin{proof}
For each $\alpha$, $K_{\alpha}$ is generated by $exp(tZ_{\alpha})$, $t\in\mathbb R$. By definition 3.1, the eigenvalues of $\mu(exp(tZ_{\alpha}))$ are $e^{\pm it/2}$ with multiplicities for $\alpha$ metaplectic and $1$ otherwise. On the other hand, for each $n\in \mathbb Z$, $\sigma_{n/2}:K_{\alpha}\to S^1$, $exp(tZ_{\alpha})\mapsto e^{-int/2}$ is a character of $K_{\alpha}$ and they exhaust all the characters of $K_{\alpha}\cong S^1$. Thus $\mu_{\sigma}|_{K_{\alpha}}$ is a direct sum of $\sigma_{\pm 1/2}$ when $\alpha$ is metaplectic and $1$ otherwise.
\end{proof}

Let $G_{\alpha}=\Phi_{\alpha}(\widetilde{SL}(2,\mathbb R))\subset G$, then $G_{\alpha}\cong\widetilde{SL}(2,\mathbb R)$ when $\alpha$ is metaplectic, and $G_{\alpha}\cong SL(2,\mathbb R)$ when $\alpha$ is not metaplectic. Let $T_{\alpha}$ be the image of the metaplectic torus of $\widetilde{SL}(2,\mathbb R)$, $N_{\alpha}$ be the image of the unipotent radical of the standard parabolic subgroup of $\widetilde{SL}(2,\mathbb R)$. Consider $Q=P\cup Pw_{\alpha}P$ where $P=NT=NAM$ is a minimal parabolic subgroup of $G$, then $U=N\cap w_{\alpha}Nw_{\alpha}^{-1}$ is the unipotent radical of $Q$. We have $\delta_N(t)=\delta_U(t)\delta_{N/U}(t)$ for $t\in T$. In particular, take $t\in T_{\alpha}$, we get $\delta_U(t)=1$ hence $\delta_N(t)=\delta_{N/U}(t)=\delta_{N_{\alpha}}(t)$. Thus $\delta_N(t)=\delta_{N_{\alpha}}(t)$ for $t\in T_{\alpha}$.

Now we get to the main result of this paper, a similar result on double covers of $p$-adic groups can be found in [3].
\begin{thm}(\textbf{Action of intertwining operators on pseudospherical $K$-types})
Let $M(w,s): I(P,\sigma,s)\to I(P,\sigma,ws)$ be the intertwining map. Then
$M(w,s)f_{s}^v=c(w,s)f_{ws}^v$ for some constant $c(w,s)$.
Moreover, let $\Delta=\{\alpha_1,...,\alpha_l\}$ be the set of simple roots, $w_i=w_{\alpha_i}$, then in the case when $\Phi$ is simply laced or $G_2$,
$$c(w_{i},s)=c_{1/2}(s_i)$$ for all $i$;
Otherwise,
$$c(w_{i},s)=c_0(s_i)$$ when $\alpha_i$ is short and
$$c(w_{i},s)=c_{1/2}(s_i)$$ when $\alpha_i$ is long.
Here for $\nu\in\mathbb C$,
$$c_0(\nu):=\sqrt{\pi}\frac{\Gamma(\frac{\nu}{2})}{\Gamma(\frac{\nu+1}{2})}$$
$$c_{1/2}(\nu):=\sqrt{\pi}\frac{\Gamma(\frac{\nu}{2})\Gamma(\frac{\nu+1}{2})}{\Gamma(\frac{\nu}{2}+\frac{3}{4})\Gamma(\frac{\nu}{2}+\frac{1}{4})}$$
\end{thm}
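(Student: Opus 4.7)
The strategy is to reduce the computation of $c(w_i,s)$ to the rank-one calculation already performed in Section 5, by restricting everything to the $\mathfrak{sl}_2$-subgroup $G_{\alpha_i}$ associated with the simple root $\alpha_i$. By Lemma 6.1 and the discussion that follows it, it suffices to show that $(M(w_i,s)f_s^v)(1)$ equals the claimed constant times $v$: both $M(w_i,s)f_s^v$ and $f_{w_is}^v$ are $K$-equivariant images of $v$ inside the one-dimensional $\Hom_K(\mu_\sigma, I(P,\sigma,w_is))$, so they must be proportional, and the proportionality constant is pinned down by evaluation at $1$.

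Since $w_i$ is a simple reflection, $N_{w_i}$ is the one-dimensional unipotent $N_{\alpha_i}\subset G_{\alpha_i}$, so the entire integration in $M(w_i,s)f_s^v(1)=\mu_\sigma(w_i)\int_{N_{\alpha_i}} f_s^v(w_i^{-1}n)\,dn$ takes place inside $G_{\alpha_i}$. For $n\in N_{\alpha_i}$, write the Iwasawa decomposition $w_i^{-1}n=n'a'k'$ inside $G_{\alpha_i}$; by uniqueness this is also the Iwasawa decomposition in $G$, with $a'\in A_{\alpha_i}=\{h_{\alpha_i}(t):t>0\}$ and $k'\in K_{\alpha_i}$. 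Using $\chi_s(h_{\alpha_i}(t))=t^{s_i}$ and the equality $\delta_N(t)=\delta_{N_{\alpha_i}}(t)$ for $t\in T_{\alpha_i}$ established just before the theorem, the function $f_s^v$ along this orbit satisfies
\[
f_s^v(w_i^{-1}n)=(a')^{s_i+1}\mu_\sigma(k')v,
\]
which coincides with the value at $w_i^{-1}n$ of the rank-one principal-series function $f^v_{s_i}$ of $G_{\alpha_i}$ as in Proposition 5.7, with inducing $K$-representation $\mu_\sigma|_{K_{\alpha_i}}$. Consequently, $(M(w_i,s)f_s^v)(1)$ matches the analogous $G_{\alpha_i}$-level quantity computed by the rank-one intertwining operator.

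By Proposition 6.4, $\mu_\sigma|_{K_{\alpha_i}}$ is a direct sum of $\sigma_{\pm 1/2}$ when $\alpha_i$ is metaplectic, and a direct sum of trivial characters otherwise. Proposition 5.7 (combined with the symmetry $c_{n/2}=c_{-n/2}$ from Proposition 5.4) then yields $c_{1/2}(s_i)\,v$ in the metaplectic case, while the linear-group analog (Proposition 5.1 at $n=0$, i.e.\ the spherical rank-one calculation) yields $c_0(s_i)\,v$ in the non-metaplectic case. Classifying the metaplectic simple roots via Proposition 2.13---all simple roots are metaplectic when $\Phi$ is simply laced or of type $G_2$, while otherwise the metaplectic simple roots are precisely the long ones---reproduces exactly the case distinction stated in the theorem.

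The main technical point will be verifying cleanly that the ambient Iwasawa factors of $w_i^{-1}n$ in $G$ coincide with those computed inside $G_{\alpha_i}$, together with checking that the canonical normalizing factor $\mu_\sigma(w_i)$ used in the definition of $M(w_i,s)$ is compatible with the rank-one normalization $\mu_\sigma|_{K_{\alpha_i}}(w_i)^{-1}$ appearing in Proposition 5.7 under a fixed lift of $w_{\alpha_i}$ to $K_{\alpha_i}$. Once this bookkeeping is set up correctly, the theorem follows directly from the explicit rank-one $c$-function formulas.
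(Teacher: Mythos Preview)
Your proposal is correct and follows essentially the same route as the paper: reduce to the rank-one subgroup $G_{\alpha_i}$ by noting that $N_{w_i}=N_{\alpha_i}$ and that $f_s^v|_{G_{\alpha_i}}$ is the rank-one pseudospherical vector with parameter $s_i$, then invoke Proposition~6.4 to identify $\mu_\sigma|_{K_{\alpha_i}}$ and apply the rank-one $c$-function formulas. The paper's writeup is terser---it cites Proposition~5.7 uniformly for both the metaplectic ($n=1$) and non-metaplectic ($n=0$) cases rather than splitting off Proposition~5.1---and it leaves implicit the Iwasawa-compatibility and normalization checks you flag at the end, but the argument is the same.
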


\begin{proof}
The idea is reduction to the $SL_2$ case.

The multiplicities of $(\mu_{\sigma},V)$ in $I(P,\sigma,s)$ and $I(P,\sigma,ws)$ are both 1, hence $M(w,s)f_{s}^v=c(w,s)f_{ws}^v$ for some constant $c(w,s)$. Evaluating at $g=1$ on both sides, we get $M(w,s)f_{s}^v(1)=c(w,s)v$. For $w=w_i$, there is a map from $I(P,\sigma,s)$ to $I(\sigma,s_i)$ given by restricting functions on $G$ to $G_{\alpha_{i}}$, where $I(\sigma,s_i)$ is the space of functions $f:G_{\alpha_i}\to V$ such that $f(namx)=a^{s_i+1}\sigma(m)f(x)$(Here $a$ stands for $h_{\alpha_i}(a)$). Since $N_{w_i}=N\cap w_iNw_i^{-1}\backslash N=N_{\alpha_i}$, $M(w_i,s)$ induces a map from $I(\sigma,s_i)$ to $I(\sigma,-s_i)$. $f_{s}^v|_{G_{\alpha_{i}}}$ satisfies $f_{s}^v(nak)=a^{s_i+1}\mu_{\sigma}(k)v$, $n\in N_{\alpha_i},a\in A_{\alpha_i},k\in K_{\alpha_i}$.

When $\Phi$ is simply laced or of type $G_2$, all roots are metaplectic by proposition 2.12. By proposition 6.4,
$$\mu_{\sigma}|_{K_{\alpha_i}}=m\sigma_{1/2}\oplus m'\sigma_{-1/2}$$ for some positive integers $m,m'$. Applying proposition 5.7, we see that $c(w_{i},s)=c_{1/2}(s_i)$.

Now assume $\Phi$ is of type $B_n,C_n,F_4$. If $\alpha_i$ is long, then it is metaplectic, by the same argument as the paragraph above, we have $c(w_{i},s)=c_{1/2}(s_i)$; If $\alpha_i$ is short, then it is not metaplectic by proposition 2.12. Hence by proposition 6.4,
$$\mu_{\sigma}|_{K_{\alpha_i}}=m\cdot 1$$ for some positive integer $m$. Applying proposition 5.7 again, we get $c(w_{i},s)=c_0(s_i)$.
\end{proof}

\begin{remark}
For any $w\in W$, write it as a reduced product of simple reflections: $w=w_1w_2\cdots w_n$. Then by proposition 4.3,
$$M(w,s)=M(w_1,w_2\cdots w_ns)M(w_2,w_3\cdots w_ns)\cdots M(w_{n-1},w_ns)M(w_n,s)$$ which implies
$$c(w,s)=c(w_1,w_2\cdots w_ns)c(w_2,w_3\cdots w_ns)\cdots c(w_{n-1},w_ns)c(w_n,s)$$
Define $$\overline M(w,s)=\frac{M(w,s)}{c(w,s)}$$ then $$\overline M(ww',s)=\overline M(w,w's)\circ \overline M(w',s)$$ for any $w,w'\in W$. They are called \emph{normalized intertwining operators} whose composition law behaves like the Weyl group.
\end{remark}

\end{document}